\def\loc{\operatorname{loc}}
\definecolor{citation}{rgb}{0.11,0.67,0.84}
\definecolor{formula}{rgb}{0.1,0.2,0.6}
\definecolor{url}{rgb}{0.11,0.67,0.84}
\newcommand{\medint}{-\kern -,375cm\int}
\newcommand{\medintinrigo}{-\kern -,315cm\int}
\newcommand{\linethrough}{\mathpalette\@thickbar}
\newcommand{\@thickbar}[2]{{#1\mkern0mu\vbox{
    \sbox\z@{$#1#2\mkern-0.5mu$}%
    \dimen@=\dimexpr\ht\tw@-\ht\z@+2\p@\relax 
    \hrule\@height0.5\p@ 
    \vskip\dimen@
    \box\z@}}
}
\newcommand{\mathstrike}[1]{\ensuremath{\linethrough{#1}}}
\newcommand{\nra}[1]{\mathstrike{\lVert} #1 \rVert}
\newtheorem{theorem}{Theorem}[section]
\newtheorem{lemma}[theorem]{Lemma}
\newtheorem{proposition}[theorem]{Proposition}
\newtheorem{corollary}[theorem]{Corollary}
\newtheorem{definition}[theorem]{Definition}
\newtheorem{remark}[theorem]{Remark}
\numberwithin{equation}{section}
\newcommand{\reqnomode}{\tagsleft@false}
\def\dx{\,{\rm d}x}
\def\dtt{\,{\rm d}t}
\def\ds{\,{\rm d}s}
\def\dt{\,{\rm d}t}
\def \d{\,{\rm d}}
\def\dist{\,{\rm dist}}
\def\supp{\,{\rm supp}}
\DeclareRobustCommand*{\bfseries}{%
  \not@math@alphabet\bfseries\mathbf
  \fontseries\bfdefault\selectfont
  \boldmath
}
\newlength{\defbaselineskip}
\newcommand{\setlinespacing}[1]
           {\setlength{\baselineskip}{#1 \defbaselineskip}}
\newcommand{\mint}{\mathop{\int\hskip -1,05em -\, \!\!\!}\nolimits}
\newcommand{\N}{\mathbb{N}}
\newcommand{\R}{\mathbb{R}}
\newcommand{\Rn}{\mathbb{R}^{n}}
\newcommand{\RN}{\mathbb{R}^{N}}
\newcommand{\M}{\mathbb{R}^{N \times n}}
\newcommand{\ti}[1]{\tilde{#1}}
\newcommand{\mf}[1]{\textnormal{\texttt{#1}}}
\newcommand{\CC}{\operatorname{C}}
\newcommand{\WW}{\operatorname{W}}
\newcommand{\LL}{\mathrm{L}}
\newcommand{\dd}{\mathrm{d}}
\newcommand{\rrr}{\textnormal{\texttt{r}}}
\newcommand{\F}{\mathscr{F}}
\newcommand{\X}{\mathbb{R}^n}
\newcommand{\Y}{\mathbb{R}^N}
\newcommand{\rr}{\varrho}
\newcommand{\snr}[1]{\lvert #1\rvert}
\newcommand{\nr}[1]{\lVert #1 \rVert}
\newcommand{\tx}[1]{\textnormal{\texttt{#1}}}
\def\loc{\operatorname{loc}}
\def\eqn#1$$#2$${\begin{equation}\label#1#2\end{equation}}
\newcommand{\p}{\partial}
\newcommand{\e}{\varepsilon}
\def\supp{\,{\rm supp }}
\newcommand{\tn}[1]{\textnormal{#1}}
\def\XXint#1#2#3{{\setbox0=\hbox{$#1{#2#3}{\int}$}
     \vcenter{\hbox{$#2#3$}}\kern-.5\wd0}}
\title[Quantified Legendreness and the regularity of minima]{Quantified Legendreness and the regularity of minima}
\author[De Filippis]{Cristiana De Filippis}  \address{Cristiana De Filippis\\Dipartimento SMFI, Universit\'a di Parma\\
  Parco Area delle Scienze 53/A, 43124 Parma, Italy} \email{\url{cristiana.defilippis@unipr.it}}
\author[Koch]{Lukas Koch}  \address{Lukas Koch\\MPI for Mathematics in the Sciences, Inselstrasse 22, 04177 Leipzig, Germany} \email{\url{lkoch@mis.mpg.de}}
\author[Kristensen]{Jan Kristensen}
\address{Jan Kristensen\\
Mathematical Institute, University of Oxford\\
Andrew Wiles Building, Radcliffe Observatory Quarter, Woodstock Road, Oxford, OX2 6GG, Oxford, United Kingdom}
\email{\url{jan.kristensen@maths.ox.ac.uk}}
\begin{document}

\subjclass[2020]{49N60, 35J60 \vspace{1mm}} 

\keywords{Regularity, $(p,q)$-growth, Nonuniform ellipticity\vspace{1mm}}

\thanks{{\it Acknowledgements.}\ C. De Filippis is supported by the University of Parma via the project "Local vs Nonlocal: mixed type operators and nonuniform ellipticity",
  CUP\_D91B21005370003, and by the INdAM GNAMPA project "Problemi non locali: teoria cinetica e non uniforme ellitticit\`a", CUP\_E53C22001930001. We thank the anonymous referee for her/his helpful comments that eventually lead to an improved presentation of our results.
\vspace{1mm}}

\begin{abstract}
We introduce a new quantification of nonuniform ellipticity in variational problems via convex duality, and prove higher
differentiability and $2d$-smoothness results for vector valued minimizers of possibly degenerate functionals.
Our framework covers convex, anisotropic polynomials as prototypical model examples - in particular, we improve in an essentially
optimal fashion Marcellini's original results \cite{ma1}.
 \end{abstract}

\maketitle
\vspace{-9mm}
%

\setlinespacing{1.00}
\section{Introduction}\label{intro}
Motivated by the regularity theory for elliptic systems driven by differential operators that are homogeneous polynomials in the derivative
symbols obtained by Douglis \& Nirenberg \cite{dn}, Ladyzhenskaya \& Ural'tseva \cite{LU}, and Morrey \cite{m1}, we introduce a new class of
autonomous variational integrals of the type
\eqn{fff}
$$
\WW^{1,p}_{\loc}(\Omega,\mathbb{R}^{N})\ni w\mapsto \F(w;\Omega):=\int_{\Omega}F(\nabla w)\dd x,
$$
governed by a nonuniformly elliptic integrand $F\in \CC^{2}(\R^{N\times n})$ in the sense that the related ellipticity ratio
$$
\mathcal{R}_{F}(z):=\frac{\mbox{highest eigenvalue of }F^{\prime\prime}(z)}{\mbox{lowest eigenvalue of }F^{\prime\prime}(z)}
$$
might blow up on large values of the derivative variable. We obtain higher differentiability and low-dimensional smoothness results for local minimizers,
whose standard definition reads as follows.
\begin{definition}
  A map $u\in \WW^{1,p}_{\loc}(\Omega,\R^N)$ is a local minimizer of functional $\F$ if for any open subset $\Omega'\Subset \Omega$ it satisfies $\F(u;\Omega')<\infty$
  and $\F(u;\Omega')\le \F(w;\Omega')$ for all $w\in u+\WW^{1,p}_{0}(\Omega',\R^N)$.
\end{definition}
More precisely, we focus on a large class of convex integrands satisfying so-called $(p,q)$-growth conditions, according to Marcellini's foundational works
\cite{M0,ma1,Marcellini1991}:
\begin{equation}\label{coercivprim_int}
|z|^{p} \lesssim F(z) \lesssim |z|^{q} + 1,
\end{equation}
for all $z\in \mathbb{R}^{N\times n}$, and some exponents $2\le p\le q<\infty$, including anisotropic convex polynomials as model example.
This family of integrands is characterized by a potentially 
wild behavior of the ellipticity ratio at infinity, 
and the rate of
blow up of $\mathcal{R}_{F}$ as $\snr{z}\to \infty$ is measured in terms of the stress tensor, i.e.:
\eqn{elr}
$$
\mathcal{R}_{F}(z)\lesssim1+\snr{F^{\prime}(z)}^{\frac{q-p}{q-1}},\qquad \quad \quad 2\le p\le q.
$$
We shall refer to functionals as \eqref{fff} defined upon integrands satisfying \eqref{coercivprim_int}-\eqref{elr} as \emph{Legendre $(p,q)$-nonuniformly elliptic integrals},
terminology justified 
since \eqref{elr} comes as the quantification of the interaction between $F$ and its Fenchel conjugate $F^{\ast}$, the strong convexity of $F$, and the
$(p,q)$-growth conditions in \eqref{coercivprim_int}, see Section \ref{cd} for more details, and \cite{EkeTem,Rockafellar} for the abstract convex analytic setting.
We also stress that by convexity, in our local setting, the pointwise definition of functional $\F$ in \eqref{fff} coincides with its Lebesgue-Serrin-Marcellini extension,
\cite{kqc,M0,ma5}. The class of nonuniformly elliptic problems we propose falls into the (slightly larger) realm of functionals with $(p,q)$-growth, first studied by
Marcellini \cite{M0,ma1,Marcellini1991} in connection to some delicate issues of compressible elasticity including the phenomenon of cavitation \cite{ball,M0,ma5}.
Roughly speaking, variational integrals with $(p,q)$-growth are driven by possibly anisotropic integrands satisfying the unbalanced growth condition \eqref{coercivprim_int},
whose ellipticity ratio can only be controlled via
\eqn{elrpq}
$$
\mathcal{R}_{F}(z)\lesssim1+\snr{z}^{q-p}.
$$
By convexity, \eqref{coercivprim_int} and \eqref{elr}, it follows that Legendre $(p,q)$-nonuniform ellipticity \eqref{elr} implies the usual $(p,q)$-nonuniform
ellipticity \eqref{elrpq}. The crucial aspect of these problems is the subtle, quantitative relation existing between the rate of blow up of the ellipticity ratio
and the regularity of minima: in fact, Marcellini \cite{Marcellini1991,ma2} and Giaquinta \cite{g} exhibited examples of $(p,q)$-nonuniformly elliptic functionals
with unbounded minimizers provided that the exponents $(p,q)$ violate a closeness condition of the type 
\eqn{clo}
$$
q<p+\texttt{o}(n),
$$
where $\texttt{o}(n)\searrow 0$ as $n\to \infty$. It is then natural to relate the regularity of minima to the possibility of slowing down the rate of blow up
of the ellipticity ratio by choosing $p$ and $q$ 
not too far apart, cf. \eqref{clo}: in a nutshell, this was Marcellini's approach \cite{ma1} to local Lipschitz
continuity for scalar minimizers of certain anisotropic energies such as
\eqn{modelpq2}
$$
\WW^{1,2}_{\loc}(\Omega)\ni w\mapsto \int_{\Omega}\snr{\nabla w}^{2}+\sum_{i=1}^{n}\snr{\nabla_{i}w}^{q}\dd x,
$$
where 
\eqn{pq22}
$$
2\le  q<\frac{2n}{n-2} \ \ \mbox{if} \ \ n\ge 3,\qquad \quad 2\le q<\infty \ \ \mbox{if} \ \ n=2,
$$
or more generally \cite{Marcellini1991,m93},
\eqn{modelpq.2}
$$
\WW^{1,p}_{\loc}(\Omega)\ni w\mapsto \int_{\Omega}\snr{\nabla w}^{p}+\sum_{i=1}^{n}\snr{\nabla_{i}w}^{q_{i}}\dx
$$
for exponents 
\begin{equation}\label{pq23}
2\le p\leq q_1\le \cdots\le q_n<p+\frac{2p} n.
\end{equation}
After Marcellini's initial success, $(p,q)$-nonuniformly elliptic functionals have been the object of intensive investigation: considerable 
attention has been focused on full regularity \cite{bdp,bm,BS1,bs,bfbf,Bonfanti2012,bbbb1,bb,ckp1,ckp,cisc,demi1,dms,demi3,elm2,elm1,ELM,Hirsch,sch24}, boundary regularity
\cite{bdms,cm1,Cianchi2019,dp,ik,ko,ko1}, nonlinear potential theoretic results \cite{ba2,bm,BS1,cm1,dms,deqc,demi1,dp,ds} and partial regularity
\cite{deqc,ds,fra,gme,gkpq,li,Schaeffner2020,Schmidt2008,Schmidt2008a,Schmidt2009}, see also \cite{masu2} for a reasonable survey - recently, $(p,q)$-nonuniformly elliptic
regularity theory found interesting applications to nonlinear homogenization \cite{cg,rs}. Note in particular that Marcellini's variational approach, originally designed
to handle polynomial rates of nonuniformity, turns out to fit also more general, possibly nonautonomous problems whose ellipticity ratio blows up faster than powers:
see \cite{mar96} for the very first results on functionals at fast exponential growth, \cite{bm,demi1} for more recent progress, and \cite{eva03} for applications
to weak KAM theory. Back to $(p,q)$-nonuniform ellipticity, it is worth mentioning that the optimal value of the threshold to
impose on the ratio $q/p$ guaranteeing gradient regularity is known in the autonomous setting only in the scalar case\cite{Marcellini1991,Hirsch}. It covers
boundedness of minima, leaving open the delicate matter of gradient regularity. In this respect, at first Marcellini \cite{ma1} determined two bounds on
the size of $q/p$ for Lipchitz continuity of minima: first, \eqref{pq22}, formulated for the model \eqref{modelpq2}, that in its general form reads as
\eqn{ex:pq}
$$
2\le p\le q<\frac{np}{n-2}\quad \ \mbox{if} \ \ n\ge 3\qquad\quad \ \ \mbox{and}\qquad\quad \ \ 2\le p\le q \quad \ \mbox{if} \ \ n=2,
$$
which was found in \cite{Marcellini1991,m93} for minima belonging to higher energy classes (e.g. $\WW^{1,q}$-minimizers), and second, the more restrictive \eqref{pq23},
that allows deriving gradient boundedness for $\WW^{1,p}$-minimizers. Actually, this second setting is the natural framework to investigate, as given the growth/coercivity
prescribed in \eqref{coercivprim_int} it is only possible to prove the existence of minima in $\WW^{1,p}$. The constraint given by \eqref{pq23} is far from being optimal:
under natural growth conditions and in the vectorial case, Carozza \& Kristensen \& Passarelli di Napoli \cite{ckp} obtained gradient higher differentiability and
as a consequence that minima belong to $\WW^{1,q}_{\rm loc}$, provided that
$$
q<p+\frac{p}{n-1},
$$
by means of convex duality methods. The earlier results did not use duality theory and required stronger natural growth conditions, see in particular \cite{elm1}.
When considering Lipschitz continuity of $\WW^{1,p}$-minima in the scalar case, a first, substantial improvement is due to Bella \& Sch\"affner \cite{BS1,bs} in the
genuine controlled growth $(p,q)$-setting, updating \eqref{pq23} to
\eqn{bsbound}
$$
q<p+\frac{2p}{n-1},
$$
by using a refinement of De Giorgi's iteration technique via optimization on radial cut-off functions, leading to the application of Sobolev inequality on spheres
rather than on balls. Under Legendre $(p,q)$-growth conditions, we are able to further relax these bounds, both with regard to obtaining higher differentiability
results and to proving Lipschitz regularity in the scalar case. More precisely, for $\WW^{1,p}$-minimizers of Legendre $(p,q)$-nonuniformly elliptic integrals we allow
a faster blow-up rate of the ellipticity ratio, quantifiable in term of exponents $(p,q)$ as
\eqn{pq}
$$
2\le p\le q<\frac{p(n-1)}{n-3} \quad \mbox{if}\ \ n\ge 4,\qquad \qquad\quad  2\le p\le q<\infty\quad \mbox{if} \ \ n\in \{2,3\}.
$$
We remark that this constraint had previously been obtained by Bella \& Sch\"affner \cite{bs} provided the minimizer is a priori assumed to be of class $\WW^{1,q}$.
Our approach is based on the duality between the gradient of minima and the related stress tensors, a technique that in this context finds its roots in \cite{ckp}.
This is at the heart of the main result of this paper, which is gradient higher differentiability for minimizers of variational integrals satisfying Legendre $(p,q)$-growth.
\begin{theorem}\label{hd} Under assumptions \eqref{pq}, and \eqref{assf}-\eqref{assf.1}, let $u\in \WW^{1,p}_{\loc}(\Omega,\RN )$ be a local minimizer of functional $\F$. Then,
\begin{flalign}\label{hdes.4}
  V_{\mu,p}(\nabla u),\ V_{1,q'}(F^\prime(\nabla u))\in \WW^{1,2}_{\loc}(\Omega,\mathbb{R}^{N\times n} )\qquad \mbox{and}\qquad \sum_{s=1}^{n}\langle \partial_{s}F^{\prime}(\nabla u),
  \partial_{s}\nabla u\rangle\in \LL^{1}_{\loc}(\Omega),
\end{flalign}
the second inclusion in $\eqref{hdes.4}$ holding in the nondegenerate case $\mu>0$ in $\eqref{assf}_{3}$. In particular, if $B\Subset \Omega$ is any ball with radius less than one,
\eqn{hdes}
$$
\mint_{B/2}\snr{\nabla V_{\mu,p}(\nabla u)}^{2}+\snr{\nabla V_{1,q'}(F^{\prime}(\nabla u))}^{2}\dx\le c\left(\mint_{B}F(\nabla u)+1\dx\right)^{\tx{b}}
$$
holds true with $c\equiv c(n,N,L,L_{\mu},p,q)$, and $\tx{b}\equiv \tx{b}(n,p,q)$.
\end{theorem}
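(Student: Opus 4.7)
The plan is to combine convex duality with the Bella--Sch\"affner radial cut-off optimization inside an approximation scheme. First, I would regularize by setting, on a fixed ball $B \Subset \Omega$, $F_{\varepsilon}(z) := F(z) + \varepsilon(1+|z|^{2})^{q/2}$, so that $F_{\varepsilon}$ is smooth, uniformly elliptic and of standard $q$-growth. I would solve the Dirichlet problem with boundary datum a mollification of $u$, producing smooth minimizers $u_{\varepsilon}$. Classical comparison and the $(p,q)$-coercivity in \eqref{coercivprim_int} give $F(\nabla u_{\varepsilon}) \to F(\nabla u)$ in $\LL^{1}_{\loc}$, which will carry the final estimate \eqref{hdes} back to $u$.

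The heart of the matter is a Caccioppoli-type inequality of the second kind at the regularized level. Differentiating the Euler--Lagrange system in the direction $\partial_{s}$ and testing with $\partial_{s}(\eta^{2}\partial_{s}u_{\varepsilon})$, summed in $s$, yields
\begin{equation*}
\int \eta^{2}\sum_{s=1}^{n}\langle \partial_{s}F_{\varepsilon}^{\prime}(\nabla u_{\varepsilon}),\partial_{s}\nabla u_{\varepsilon}\rangle \dd x \le c\int \snr{\nabla \eta}^{2}\snr{F_{\varepsilon}^{\prime}(\nabla u_{\varepsilon})-\xi}\snr{\partial_{s}u_{\varepsilon}}\dd x,
\end{equation*}
after subtracting an arbitrary constant vector $\xi$ inside $F_{\varepsilon}^{\prime}$ (the standard hole-filling/mean-value trick). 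By the strong convexity of $F$ the left-hand side controls $\int \eta^{2}\snr{\nabla V_{\mu,p}(\nabla u_{\varepsilon})}^{2}\dd x$; simultaneously, exploiting Fenchel duality and the Legendre quantification \eqref{elr}---which is precisely calibrated so that $F^{\ast}$ is $q'$-strongly convex where $F^{\prime}$ is large---the same LHS dominates $\int \eta^{2}\snr{\nabla V_{1,q'}(F_{\varepsilon}^{\prime}(\nabla u_{\varepsilon}))}^{2}\dd x$. This double bound, not available in the purely primal $(p,q)$-setting, is what the Legendre structure buys.

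The key step is the treatment of the right-hand side via Bella--Sch\"affner. Rather than a generic cut-off, I would take $\eta$ radial, supported between two concentric spheres of radii $t$ and $\tau$, and optimize over the radius. After H\"older and the isoperimetric trace/Sobolev inequality on $\mathbb{S}^{n-1}$, the ball integral on the RHS reduces to a sphere integral in dimension $n-1$; feeding in the fractional Sobolev embedding of $V_{1,q'}(F^{\prime}_{\varepsilon}(\nabla u_{\varepsilon}))$ produced by the LHS and absorbing via Young's inequality, one reaches a self-improving inequality whose admissibility threshold is exactly \eqref{pq}: the bound $q < p(n-1)/(n-3)$ appears as the compatibility condition between the $q'$-Sobolev exponent on $\mathbb{S}^{n-1}$ and the $p$-coercivity of $F$. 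Standard iteration and choice of $\xi$ as an appropriate average then give the quantitative estimate \eqref{hdes} uniformly in $\varepsilon$.

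The last step is the passage to the limit $\varepsilon \to 0$: weak compactness of $V_{\mu,p}(\nabla u_{\varepsilon})$ and $V_{1,q'}(F_{\varepsilon}^{\prime}(\nabla u_{\varepsilon}))$ in $\WW^{1,2}_{\loc}$, together with almost everywhere convergence of $\nabla u_{\varepsilon}$ and the continuity of $F^{\prime}$, identify the limits and transfer the bound to $u$, yielding both inclusions in \eqref{hdes.4} (the second requiring $\mu>0$ because only then the map $z\mapsto V_{1,q'}(F^{\prime}(z))$ is nondegenerate) and the $\LL^{1}_{\loc}$-integrability of the mixed quantity $\sum_{s}\langle\partial_{s}F^{\prime}(\nabla u),\partial_{s}\nabla u\rangle$. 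The principal obstacle is the simultaneous primal/dual control on the RHS: reconciling the $V_{\mu,p}$- and $V_{1,q'}$-estimates with the Bella--Sch\"affner sphere iteration forces one to track constants through the Fenchel pairing carefully, and it is precisely the Legendre $(p,q)$-hypothesis \eqref{elr} that closes this loop at the sharp exponent range \eqref{pq}.
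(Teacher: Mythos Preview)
Your plan is essentially the paper's own strategy: regularize, differentiate the Euler--Lagrange system, exploit Corollary~\ref{keylemma} (the Legendre duality) to control both $\snr{\nabla V_{\mu,p}}^{2}$ and $\snr{\nabla V_{1,q'}(F')}^{2}$ by the single Hessian form $\langle F''\partial_{s}\nabla u_{\varepsilon},\partial_{s}\nabla u_{\varepsilon}\rangle$, then run the Bella--Sch\"affner sphere argument and pass to the limit.

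Two points in your sketch are imprecise and worth correcting. First, the displayed Caccioppoli right-hand side $\int\snr{\nabla\eta}^{2}\snr{F'_{\varepsilon}-\xi}\snr{\partial_{s}u_{\varepsilon}}$ is not what the computation produces; after Cauchy--Schwarz one gets $\int\snr{\nabla\eta}^{2}\snr{F''_{\varepsilon}}\snr{\nabla u_{\varepsilon}-(\nabla u_{\varepsilon})_{\partial B_{\rr}}}^{2}$ (the paper reaches the sphere integral directly by testing with the mollified characteristic function of $B_{\rr}$ rather than an annular cutoff). More importantly, the sphere Sobolev--Poincar\'e is applied to $V_{\mu,p}(\nabla u_{\varepsilon})$, \emph{not} to $V_{1,q'}(F'_{\varepsilon})$: after invoking \eqref{assf.2} the right-hand side factors as $\snr{F'}^{(q-p)/(q-1)}\cdot\ell_{\mu}^{p-2}\snr{\nabla u_{\varepsilon}-(\nabla u_{\varepsilon})_{\partial B_{\rr}}}^{2}$, and H\"older with $(q/(q-p),q/p)$ sends the stress factor to $\LL^{q'}$ (controlled by energy via Corollary~\ref{ccc}) while the primal oscillation lands in $\LL^{2q/p}(\partial B_{\rr})$, which is then interpolated between $\LL^{2}$ and $\LL^{2^{*}_{n-1}}$. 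This interpolation, not a dual embedding, is where the threshold \eqref{pq} emerges. Second, the condition $\mu>0$ is needed for the $\LL^{1}_{\loc}$-integrability of $\sum_{s}\langle\partial_{s}F'(\nabla u),\partial_{s}\nabla u\rangle$ (not for the $\WW^{1,2}$ inclusion of $V_{1,q'}(F'(\nabla u))$), and the paper recovers it through a Young-measure lower-semicontinuity argument after noting that $\mu>0$ upgrades $V_{\mu,p}(\nabla u_{\varepsilon})\in\WW^{1,2}$ to genuine $\WW^{2,2}$ bounds on $u_{\varepsilon}$.
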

We immediately refer to Section \ref{sa} for the precise description of our assumptions and of the various quantities mentioned above.
The main building block in the proof of Theorem \ref{hd} the possibility of enlarging the ellipticity of $F$ by means of convex duality arguments in such a way that it quantitatively
controls both gradient of minima and stress tensor, granted by our Legendre $(p,q)$-growth, thus yielding essentially optimal results, \cite{ma2}.
As a consequence of the boost of integrability earned via Sobolev embedding from Theorem \ref{hd}, we also derive a lower order regularity result
in the spirit of Campanato \cite[Theorem 1.IV]{c2}.
\begin{corollary}\label{cor}
Under assumptions \eqref{pq}, and \eqref{assf}-\eqref{assf.1}, let $u\in \WW^{1,p}_{\loc}(\Omega,\RN )$ be a local minimizer of functional $\F$. The following holds:
\begin{itemize}
\item[(\emph{i}.)] if $p>n-2$ and $n\ge 3$, then $u\in \CC^{0,1-\frac{n-2}{p}}_{\loc}(\Omega, \RN )$;
\item[(\emph{ii}.)] if in addition 
\eqn{cor.1}
$$
2\le p\le q\le \frac{np}{n-2}\quad \mbox{if} \ \ n\ge 3\qquad \mbox{and}\qquad 2\le p\le q<\infty\quad \mbox{if} \ \ n=2,
$$
then $\nabla u\in \LL^{q}_{\loc}(\Omega,\mathbb{R}^{N\times n})$;
\item[(\emph{iii}.)] if $n=2$ then $u\in \CC^{0,\beta_{0}}_{\loc}(\Omega,\RN )\cap \WW^{1,m}_{\loc}(\Omega,\RN )$ and
  $F'(\nabla u)\in \LL^{m}_{\loc}(\Omega,\M )$ for all $\beta_{0}\in (0,1)$, $m\in [1,\infty)$.
\end{itemize}
\end{corollary}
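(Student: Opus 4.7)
Overall approach. All three items should follow as relatively soft consequences of the higher differentiability furnished by Theorem~\ref{hd}: since $|V_{\mu,p}(z)|^{2}\sim(\mu^{2}+|z|^{2})^{(p-2)/2}|z|^{2}\gtrsim |z|^{p}-c$, the inclusion $V_{\mu,p}(\nabla u)\in\WW^{1,2}_{\loc}(\Omega,\M)$, which holds even in the degenerate regime $\mu=0$ by the first part of \eqref{hdes.4}, transfers $\WW^{1,2}$-regularity into higher integrability of $\nabla u$ through Sobolev embedding, while the quantitative estimate \eqref{hdes} yields the corresponding localized bounds on any ball $B\Subset \Omega$. I do not anticipate a serious obstacle: the real work sits inside Theorem~\ref{hd}, and everything else is standard embedding theory paired with the pointwise comparison between $V_{\mu,p}(\nabla u)$ and powers of $|\nabla u|$.

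For items (i) and (ii), applying the Sobolev embedding $\WW^{1,2}(B/2)\hookrightarrow \LL^{2n/(n-2)}(B/2)$ to $V_{\mu,p}(\nabla u)$ and combining it with the pointwise lower bound $|V_{\mu,p}(z)|^{2}\gtrsim |z|^{p}-c$ gives $\nabla u\in \LL^{np/(n-2)}_{\loc}(\Omega,\M)$. This already establishes (ii), because \eqref{cor.1} is precisely the condition $q\le np/(n-2)$. For (i), note that the hypothesis $p>n-2$ is equivalent to $np/(n-2)>n$, so Morrey's embedding upgrades the higher integrability of $\nabla u$ just proved to Hölder continuity of $u$ with sharp exponent $\alpha=1-n(n-2)/(np)=1-(n-2)/p$, matching the statement.

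For item (iii), in dimension $n=2$ the critical Sobolev embedding becomes $\WW^{1,2}\hookrightarrow \LL^{m}$ for every finite $m$, so the same reasoning provides $\nabla u\in\LL^{m}_{\loc}(\Omega,\M)$ for all $m<\infty$, and in particular $u\in\WW^{1,m}_{\loc}(\Omega,\RN)$. Combining this with the $(p,q)$-growth of $F^{\prime}$ encoded in \eqref{assf}, namely $|F^{\prime}(z)|\lesssim |z|^{q-1}+1$, also yields $F^{\prime}(\nabla u)\in \LL^{m}_{\loc}(\Omega,\M)$ for every $m$. The Hölder regularity is then immediate: Morrey's embedding with any exponent $m>2$ produces $u\in \CC^{0,1-2/m}_{\loc}(\Omega,\RN)$, so letting $m\nearrow\infty$ delivers $u\in \CC^{0,\beta_{0}}_{\loc}$ for every $\beta_{0}\in(0,1)$.
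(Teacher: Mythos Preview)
Your proposal is correct and follows essentially the same route as the paper: use Theorem~\ref{hd} to place $V_{\mu,p}(\nabla u)$ in $\WW^{1,2}_{\loc}$, apply Sobolev embedding to deduce $\nabla u\in \LL^{np/(n-2)}_{\loc}$ (respectively $\LL^{m}_{\loc}$ for all $m<\infty$ when $n=2$), and then invoke Sobolev--Morrey to obtain the stated H\"older continuity. Your write-up is in fact more detailed than the paper's, which dispatches the whole corollary in two sentences.
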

The higher differentiability granted by Theorem \ref{hd} allows deriving full regularity in two ambient space dimensions.
\begin{theorem}\label{2dreg}
In ambient space dimension $n=2$, suppose that \eqref{pq}, and \eqref{assf}-\eqref{assf.1} are in force. Then, any local minimizer
$u\in \WW^{1,p}_{\loc}(\Omega,\mathbb{R}^{N})$ of functional $\F$ has locally H\"older continuous gradient. In particular, on all balls $B\Subset \Omega$,
the $\LL^{\infty}$-$\LL^{p}$ type estimate
\eqn{ll}
$$
\nr{\nabla u}_{\LL^{\infty}(B/8)}\le c\left(\mint_{B}F(\nabla u)+1\dx\right)^{\tx{b}}
$$
holds with $c\equiv c(N,L,L_{\mu},p,q)$ and $\tx{b}\equiv \tx{b}(p,q)$. Moreover, given any two open subsets $\Omega_{2}\Subset \Omega_{1}\Subset \Omega$ 
with $\dist(\Omega_{2},\partial\Omega_{1})\approx\dist(\Omega_{1},\partial \Omega)
$, there exists
$\mf{t}\equiv \mf{t}(N,L,L_{\mu},p,q,\F(u;\Omega_{1}),\dist(\Omega_{2},\partial \Omega_{1}))>1$ such that
$V_{p}(\nabla u),V_{1,q'}(F'(\nabla u))\in \WW^{1,2\mf{t}}(\Omega_{2},\mathbb{R}^{N\times 2})$ and the reverse H\"older type inequality
\eqn{ll.1}
$$
\left(\mint_{B/8}\snr{\nabla V_{\mu,p}(\nabla u)}^{2\tx{t}}+\snr{\nabla V_{1,q'}(F'(\nabla u))}^{2\tx{t}}\dx\right)^{\frac{1}{2\tx{t}}}\le c\left(\mint_{B}F(\nabla u)+1\dx\right)^{\tx{b}},
$$
is verified for any ball $B\Subset \Omega_{2}$, with $c\equiv c(N,L,L_{\mu},p,q,\F(u;\Omega_{1}),\dist(\Omega_{2},\partial\Omega_{1}))$, $\tx{b}\equiv \tx{b}(p,q)$. Finally, 
\begin{itemize}
\item if $\mu>0$ in $\eqref{assf}$, then the gradient of minima is locally H\"older continuous up to any exponent less than one;
\item if $\mu>0$ in $\eqref{assf}$, and integrand $F$ is real analytic, then $u$ is real analytic.
\end{itemize}
\end{theorem}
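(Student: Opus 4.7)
The plan is to leverage the higher-differentiability output of Theorem \ref{hd} together with the two-dimensional Sobolev embedding in order to upgrade integrability to $\LL^{\infty}$, then pass through uniform ellipticity on the range of the gradient to apply classical theory. First, in $n=2$ the embedding $\WW^{1,2}\hookrightarrow \LL^{m}$ holds for every finite $m$, so Theorem \ref{hd} immediately yields $V_{\mu,p}(\nabla u), V_{1,q'}(F'(\nabla u))\in \LL^{m}_{\loc}$ for all $m<\infty$, which via the definition of the auxiliary $V$-maps translates into $\nabla u, F'(\nabla u)\in \LL^{m}_{\loc}$ for every finite $m$, with quantitative bounds that are polynomial in $\int_{B}F(\nabla u)\dx+1$. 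This is the starting point already recorded in Corollary~\ref{cor}(iii), now to be used as the base case of an iteration.

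Next, I would revisit the Caccioppoli-type inequality behind Theorem \ref{hd}, testing the Euler--Lagrange system after differentiation against $\partial_{s}u$ multiplied by weights of the form $(\mu^{2}+|\nabla u|^{2})^{\gamma/2}\phi^{2}$ with $\gamma\geq 0$ and a cut-off $\phi$. Convex duality, together with the Legendre $(p,q)$ condition \eqref{elr}, lets the stress tensor absorb the $(p,q)$-mismatch in the same fashion as in the proof of Theorem \ref{hd}, producing a family of weighted estimates
\[
\int (\mu^{2}+|\nabla u|^{2})^{\gamma/2}|\nabla V_{\mu,p}(\nabla u)|^{2}\phi^{2}\dx\leq c(\gamma)\int \bigl[(\mu^{2}+|\nabla u|^{2})^{\gamma/2}|V_{\mu,p}(\nabla u)|^{2}+1\bigr]|\nabla\phi|^{2}\dx.
\]
Coupled with the $n=2$ embedding, this provides a Moser-type chain whose starting integrability $\nabla u\in \LL^{m}_{\loc}$, any $m$, feeds into an arbitrary-order iteration with explicit tracking of the constants. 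The outcome is the $\LL^{\infty}$-$\LL^{p}$ bound \eqref{ll}, with the exponent $\tx{b}$ arising from counting iteration steps against the exponents $p,q$. A parallel computation, testing the dual equation for $F'(\nabla u)$, controls $F'(\nabla u)$ in $\LL^{\infty}$ as well.

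Once $\nabla u$ is locally bounded, the range of $\nabla u$ is a compact set on which, by \eqref{elr} and \eqref{assf}--\eqref{assf.1}, the ellipticity ratio $\mathcal R_{F}$ is finite: the system becomes uniformly elliptic with bounded, continuous coefficients. Applying Gehring's lemma to the Caccioppoli inequalities for both $\nabla V_{\mu,p}(\nabla u)$ and $\nabla V_{1,q'}(F'(\nabla u))$, with localization controlled by $\dist(\Omega_{2},\partial\Omega_{1})$ and with right-hand side majorized by \eqref{ll}, yields the self-improvement \eqref{ll.1} for some $\mf{t}>1$ depending on the uniform ellipticity bound, hence on $\F(u;\Omega_{1})$ and the distance to the boundary. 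To pass from higher integrability of the gradient of $V_{\mu,p}(\nabla u)$ to local H\"older continuity of $\nabla u$, I would freeze coefficients and run a standard perturbation argument for uniformly elliptic systems in two variables: in dimension two convexity of $F$ combined with the uniform ellipticity provided by gradient boundedness yields full $\CC^{0,\alpha}$-regularity for $\nabla u$ for some $\alpha>0$, a classical result going back to Morrey.

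Finally, under the nondegeneracy $\mu>0$, the second derivative $F''(\nabla u)$ is continuous and strictly positive on the range of $\nabla u$, so Schauder theory applied to the linearized system gives $\nabla u\in \CC^{0,\alpha}_{\loc}$ for every $\alpha<1$, and if in addition $F$ is real analytic the Hilbert--Morrey theorem on analyticity of solutions to analytic elliptic systems upgrades this to real analyticity of $u$. The main obstacle I anticipate is the first step: setting up the weighted Caccioppoli inequality so that the dependence of $c(\gamma)$ on $\gamma$ remains compatible with the Moser iteration despite the $(p,q)$-gap. The Legendre assumption \eqref{elr} is critical here, since it is precisely what allows the stress tensor to carry the extra integrability across the iteration without forcing a closeness condition beyond \eqref{pq}.
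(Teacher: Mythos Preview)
Your overall strategy---first get $\nabla u\in \LL^{\infty}_{\loc}$, then use uniform ellipticity on the range of $\nabla u$ for Gehring and Schauder---matches the paper's. The final steps (Gehring for \eqref{ll.1}, freezing coefficients and Schauder for the H\"older and analytic statements) are essentially what the paper does in Section~\ref{2d}. The serious gap is in your route to $\LL^{\infty}$.

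You propose to test the differentiated system against $\partial_{s}u$ multiplied by powers $(\mu^{2}+|\nabla u|^{2})^{\gamma/2}$ and run a Moser iteration. In the \emph{scalar} case this works, and the paper carries it out exactly this way in Section~\ref{scalar} (Lemma~\ref{l62}): the cross-term arising from the derivative of the weight,
\[
(\alpha+1)\sum_{s}\int \eta^{2}\tx{L}(\nabla v)^{\alpha}\langle F''(\nabla v)\,\partial_{s}v\,\partial_{s}\nabla v,\nabla \tx{L}(\nabla v)\rangle\dx,
\]
has a good sign precisely because in the scalar setting $\sum_{s}\partial_{s}v\,\partial_{s}\nabla v=\tfrac{1}{p}\ell_{\mu}^{2-p}\nabla\tx{L}$, which collapses the expression to a nonnegative quadratic form $\langle F''\ell_{\mu}^{2-p}\nabla\tx{L},\nabla\tx{L}\rangle$. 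In the vectorial case this identity fails: $\sum_{s}\partial_{s}u^{\beta}\,\partial_{s}\partial_{i}u^{\alpha}$ is a three-index object that does not reduce to $\nabla\tx{L}$, and the resulting cross-term is neither a perfect square nor sign-controlled by $F''$ without Uhlenbeck-type structure. Your weighted Caccioppoli therefore does not close for general systems, and the Moser chain cannot start. The ``dual equation for $F'(\nabla u)$'' you invoke does not exist in this framework either; the stress tensor enters only through the primal system and the duality relations of Corollary~\ref{keylemma}.

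The paper bypasses this obstruction by two independent arguments, both specific to $n=2$ and neither requiring a power-type Caccioppoli. The first (Section~\ref{2dm}) combines \eqref{hdes.5} with Trudinger's inequality to obtain a logarithmic bound $\nra{V_{1,q'}(F'(\nabla u_{\varepsilon}))}_{\LL^{2}(B_{\rr})}^{2}\lesssim \log(\rrr/\rr)$, then inserts this into the differential inequality $\tx{H}_{\varepsilon}(\rr)\le c\rr\log(\rrr/\rr)^{(q-p)/2q}\tx{H}_{\varepsilon}'(\rr)$, integrates, and invokes Frehse's log-Morrey lemma for continuity. The second (Section~\ref{2dg}) uses that the \emph{approximations} $u_{\varepsilon}$ are already $\WW^{1,\infty}_{\loc}$ by Campanato's $2d$ theorem for uniformly elliptic systems \eqref{5.1.0}, then runs a quantitative Gehring lemma (Lemmas~\ref{lglg}--\ref{revh}) with explicit tracking of the constant $\tx{M}_{\varepsilon}\approx\|F'(\nabla u_{\varepsilon})\|_{\LL^{\infty}}^{(q-p)/(q-1)}$, followed by sharp Sobolev--Morrey (Proposition~\ref{shso}) and a Young/iteration-lemma absorption of $\tx{M}_{\varepsilon}$. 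Either of these would replace your Moser step.
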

Theorems \ref{hd}-\ref{2dreg} and Corollary \ref{cor} cover a large number of models, including \eqref{modelpq2}-\eqref{modelpq.2}, and more generally, convex polynomials.
Actually, restating Corollary \ref{cor} and Theorem \ref{2dreg} for vector-valued minimizers of functionals driven by strongly convex, even polynomials, we extend
to the nonuniformly elliptic framework early results of Morrey \cite{MorreyB,m1}.
\begin{theorem}\label{polyt}
Let $u\in \WW^{1,p}_{\loc}(\Omega,\mathbb{R}^{N})$ be a local minimizer of functional $\F$, where the integrand $F$ is a convex, even polynomial with nonnegative
homogeneous components, lowest homogeneity degree larger or equal than $p$, and satisfying the lower bound in $\eqref{assf}_{3}$. The following holds:
\begin{itemize}
    \item[(\emph{i}.)] in three space dimensions $n=3$, minima are locally $(p-1)/p$-H\"older continuous; 
    \item[(\emph{ii}.)] in two space dimensions $n=2$, if $\mu>0$ in \eqref{assf}$_{3}$ then minima are real analytic.
\end{itemize}
\end{theorem}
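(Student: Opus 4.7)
The plan is to derive Theorem~\ref{polyt} by combining the previously established Corollary~\ref{cor} and Theorem~\ref{2dreg} with a direct verification that every convex even polynomial $F$ with nonnegative homogeneous components of lowest degree at least $p$, equipped with the lower bound in $\eqref{assf}_{3}$, falls within the Legendre $(p,q)$-framework \eqref{assf}-\eqref{assf.1} on setting $q$ equal to the polynomial degree of $F$. Since both parts of the statement concern dimensions $n \in \{2,3\}$, the exponent constraint \eqref{pq} is automatic and imposes no upper bound on $q$, so every such polynomial is admissible.

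The core task is therefore the verification of the Legendre ellipticity \eqref{elr}. Decompose $F = \sum_{k=1}^{K} F_{d_{k}}$ into its nonzero homogeneous components, with $p \leq d_{1} < \cdots < d_{K} = q$; each $F_{d_{k}}$ is convex, even, nonnegative and homogeneous of degree $d_{k}$. The upper bound $|F''(z)| \leq c(1+|z|^{q-2})$ is immediate from the polynomial structure, while the hypothesis gives $F''(z)[\xi,\xi] \geq c(\mu^{2}+|z|^{2})^{(p-2)/2}|\xi|^{2}$; together these produce the raw estimate $\mathcal{R}_{F}(z) \lesssim 1+|z|^{q-p}$. To promote this into \eqref{elr} I would exploit homogeneity on the unit sphere: on the (relatively open) subset of $S^{n-1}$ where $|F_{d_{k}}''|$ does not vanish, convexity together with Euler's identity $F_{d_{k}}'(z)\cdot z = d_{k} F_{d_{k}}(z)$ propagates a pointwise lower bound $|F_{d_{k}}'(z)| \geq c_{k} |z|^{d_{k}-1}$ via integration along rays, while on the complementary cone the contribution of $F_{d_{k}}$ to the top eigenvalue of $F''(z)$ is suppressed. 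A compactness and partition-of-unity argument on $S^{n-1}$, summed over $k$, then patches these matching local bounds into the global inequality $\mathcal{R}_{F}(z) \lesssim 1+|F'(z)|^{(q-p)/(q-1)}$.

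Once \eqref{assf}-\eqref{assf.1} have been verified for such $F$, the conclusions follow quickly. For (i), apply Corollary~\ref{cor}(i) in dimension $n=3$: since $p \geq 2 > 1 = n-2$, the resulting Hölder exponent is $1-(n-2)/p = (p-1)/p$, as claimed. For (ii), in $n=2$ with $\mu>0$ the integrand $F$ is a polynomial, hence real analytic, and the last bullet of Theorem~\ref{2dreg} immediately delivers real analyticity of $u$.

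The delicate step is the Legendre verification just sketched, because a higher-degree component $F_{d_{k}}$ may have a Hessian that degenerates on a proper cone, so that $|F_{d_{k}}'|$ is not pointwise comparable to $|z|^{d_{k}-1}$; however, on that very cone the contribution of $F_{d_{k}}$ to the top eigenvalue of $F''(z)$ also drops, and the two degeneracies must be shown to balance quantitatively. This matching is the main obstacle; a possible alternative to the spherical-compactness route is to argue via convex duality directly, translating \eqref{elr} into a controlled growth statement for the Fenchel conjugate $F^{\ast}$ that decomposes nicely along the homogeneous pieces, in keeping with the conjugacy philosophy that drives the rest of the paper.
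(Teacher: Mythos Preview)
Your overall strategy matches the paper exactly: verify that such polynomials satisfy the Legendre $(p,q)$-framework \eqref{assf}--\eqref{assf.1} with $q$ the polynomial degree, note that \eqref{pq} is vacuous for $n\in\{2,3\}$, then invoke Corollary~\ref{cor}(\emph{i}.) for $n=3$ and the last clause of Theorem~\ref{2dreg} for $n=2$. The applications in your final paragraph are correct.

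The gap is in the Legendre verification itself. Your spherical compactness sketch does not address the central difficulty, which you correctly flag: when you sum the homogeneous pieces, the gradients $F_{d_k}'(z)$ could in principle interfere destructively, so that $|F'(z)|$ is much smaller than $\max_k |F_{d_k}'(z)|$, and then the bound $|F''(z)|\lesssim 1+|F'(z)|^{(q-2)/(q-1)}$ would fail even though each component behaves well on its own support cone. A partition of unity on the sphere does not by itself rule this out, because the cones where different components are active overlap. The paper handles this not by compactness but by an explicit angle estimate (Lemma~\ref{lemma}): if $H$ is convex, even, $s$-homogeneous and $Q$ satisfies $\langle Q'(z),z\rangle\ge 0$, then $\langle Q'(z),H'(z)\rangle\ge -\delta|Q'(z)||H'(z)|$ for some structural $\delta<1$, so the gradients of the pieces are never close to antiparallel. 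This feeds an induction on the degree (Proposition~\ref{poly}) that delivers \eqref{ePolyn}, and Corollary~\ref{poly.er} then gives \eqref{assf.1} in the degenerate case. With those results in hand the proof of Theorem~\ref{polyt} is the two-line argument you wrote; the substance is entirely in Section~\ref{polyn}, which you should cite rather than attempt to replace.
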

It is worth highlighting that convex, even polynomials with nonnegative homogeneous components automatically satisfy our Legendre
$(p,q)$-nonuniform ellipticity condition,\footnote{Specifically, for polynomials $p$ denotes the ellipticity exponent in $\eqref{assf}_{3}$, while $q$ indicates the degree.}
see Section \ref{polyn} - in particular Theorem \ref{polyt} holds with no restriction on the degree of the polynomial. In the scalar setting, combining Theorem \ref{hd}
with a homogenised Moser iteration argument, we obtain Lipschitz regularity of minima.
\begin{theorem}\label{scat}
Under assumptions \eqref{pq}, and \eqref{assf}-\eqref{assf.1}, let $u\in \WW^{1,p}_{\loc}(\Omega)$ be a local minimizers of functional $\F$.
Then, $u\in \WW^{1,\infty}_{\loc}(\Omega)$. In particular, whenever $B\Subset \Omega$ is a ball with radius $\rrr\in (0,1]$, the Lipschitz bound
\begin{flalign}\label{scai}
\nr{\nabla u}_{\LL^{\infty}(B/8)}\le c\left(\mint_{B}F(\nabla u)+1\dx\right)^{\tx{b}}
\end{flalign}
holds with $c\equiv c(n,L,L_{\mu},p,q)$ and $\tx{b}\equiv \tx{b}(n,p,q)$.
\end{theorem}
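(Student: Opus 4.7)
The plan is to combine the higher differentiability granted by Theorem \ref{hd} with a Moser iteration adapted to the scalar $(p,q)$-nonuniformly elliptic setting. Since $N=1$, the Euler--Lagrange equation $\di F'(\nabla u)=0$ may be differentiated coordinatewise to obtain, for each $s\in\{1,\dots,n\}$, the linearised equation $\di(F''(\nabla u)\partial_{s}\nabla u)=0$, with all subsequent manipulations justified by the higher differentiability \eqref{hdes.4}. Powers of $|\nabla u|$ multiplied by a cutoff are then admissible test functions, and it is precisely here that the scalar hypothesis is crucial.

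The first step is a \emph{homogenised Caccioppoli inequality}. For every $\gamma\ge 0$ and every radial cutoff $\eta\in C^{\infty}_{c}(B)$, testing the differentiated equation against $\partial_{s}u\cdot\eta^{2(\gamma+1)}\Phi_{\gamma}(|\nabla u|)$ with a suitable convex weight $\Phi_{\gamma}$ and summing over $s$, then using the lower ellipticity in \eqref{assf} and Young's inequality, should yield an estimate of the schematic form
$$
\int_{B}\Phi_{\gamma}(|\nabla u|)\langle F''(\nabla u)\partial_{s}\nabla u,\partial_{s}\nabla u\rangle\eta^{2(\gamma+1)}\dx\le c(\gamma+1)^{2}\int_{B}|F'(\nabla u)|^{2}\Phi_{\gamma}(|\nabla u|)\eta^{2\gamma}|\nabla\eta|^{2}\dx.
$$
The second step combines this bound with Sobolev's inequality applied to $\eta^{\gamma+1}V_{\mu,p}(\nabla u)$ and with the Bella--Sch\"affner optimisation over radial cutoffs, whose reduction to Sobolev's inequality on spheres sharpens the resulting exponent. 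The quantified Legendre condition \eqref{elr} is then invoked to trade the dual integrability of $F'(\nabla u)$ granted by Theorem \ref{hd} for primal integrability of $\nabla u$. The net outcome is a reverse H\"older type inequality of the form
$$
\Bigl(\mint_{B_{\sigma}}|\nabla u|^{p(\gamma+1)\chi}\dx\Bigr)^{1/\chi}\le \frac{c(\gamma+1)^{\kappa}}{(\sigma'-\sigma)^{\kappa}}\mint_{B_{\sigma'}}\bigl(|\nabla u|^{p(\gamma+1)}+1\bigr)\dx,
$$
valid for $\rrr/8\le\sigma<\sigma'\le\rrr$, with a genuine gain factor $\chi=\chi(n,p,q)>1$ precisely under the sharp bound \eqref{pq}.

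In the third step I iterate this reverse H\"older estimate along a geometric sequence of radii $\sigma_{k}\downarrow\rrr/8$ and exponents $\gamma_{k}\uparrow\infty$ in Moser's fashion, initialising with the $L^{p}$--bound for $\nabla u$ guaranteed by minimality together with the quantitative higher differentiability estimate \eqref{hdes}. Tracking the product of the scaling constants and summing the resulting geometric series in $k$ should deliver $\nabla u\in L^{\infty}_{\loc}(\Omega)$ and the explicit Lipschitz bound \eqref{scai} with the stated dependencies.

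The main obstacle is calibrating $\Phi_{\gamma}$ and the cutoff exponent in the first two steps so that, after Sobolev embedding and the exchange of primal and dual exponents via \eqref{elr}, the gain factor $\chi>1$ is realised exactly in the range $q<p(n-1)/(n-3)$; for this it is indispensable that \emph{both} $V_{\mu,p}(\nabla u)$ and $V_{1,q'}(F'(\nabla u))$ belong to $\WW^{1,2}_{\loc}$, since primal regularity alone is insufficient to cover the full range \eqref{pq}. Scalarity is exploited decisively in step one to make $|\nabla u|^{\gamma}\cdot\partial_{s}u$ a legitimate test function: in the vectorial regime this manipulation is unavailable, which explains the restriction to $N=1$ in Theorem \ref{scat}.
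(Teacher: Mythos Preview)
Your outline has the right ingredients---differentiated Euler--Lagrange equation, Moser iteration, Bella--Sch\"affner optimisation on spheres---but the mechanism you describe differs from the paper's in a way that leaves a genuine gap.

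First, a rigour issue: you work directly with $u$, but to test the differentiated system against $\eta^{2(\gamma+1)}\Phi_{\gamma}(|\nabla u|)\partial_{s}u$ for arbitrarily large $\gamma$ you need $\nabla u\in L^{\infty}_{\loc}$ a priori, which is exactly the claim. The paper sidesteps this by running the whole argument on the regularised minimisers $u_{\varepsilon}$ of Section~\ref{as}, which by $\eqref{cdq}_{1,2,3}$ and classical theory are already $W^{1,\infty}_{\loc}\cap W^{2,2}_{\loc}$; only at the very end does one pass to the limit via \eqref{approx}.

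Second, and more importantly, your Caccioppoli inequality is not the right one and the ``homogenisation'' you invoke is not the device the paper uses. Testing as in Lemma~\ref{l62} produces on the right-hand side a term of the form
\[
\int_{B}\bigl(1+|F'_{\varepsilon}(\nabla v_{\varepsilon})|^{\frac{q-p}{q-1}}\bigr)\,\tx{l}_{\alpha}(\nabla v_{\varepsilon})^{2}\,|\nabla\eta|^{2}\dx,
\]
not $|F'|^{2}\Phi_{\gamma}$. The paper then \emph{freezes} the nonuniformity by bounding $|F'_{\varepsilon}(\nabla v_{\varepsilon})|^{(q-p)/(q-1)}\le \tx{M}_{\varepsilon}:=\|F'_{\varepsilon}(\nabla v_{\varepsilon})\|_{L^{\infty}(B_{\tau_{1}})}^{(q-p)/(q-1)}$, which is finite because $u_{\varepsilon}$ is Lipschitz. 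This turns \eqref{cacc} into a uniformly elliptic Caccioppoli inequality with constant $\tx{M}_{\varepsilon}$, and the abstract Moser iteration of Lemma~\ref{am} (which is where the sphere Sobolev inequality enters) gives
\[
\|\tx{L}(\nabla v_{\varepsilon})\|_{L^{\infty}(B_{\tau_{2}})}\le \frac{c\,\tx{M}_{\varepsilon}^{\frac{\gamma}{1-\gamma}}}{(\tau_{1}-\tau_{2})^{\tx{a}_{0}}}\|\tx{l}_{-1}(\nabla v_{\varepsilon})\|_{W^{1,2}(B_{\tau_{1}})}^{2}.
\]
Since the left-hand side controls $|F'_{\varepsilon}|^{p/(q-1)}$ and $\tx{M}_{\varepsilon}=\|F'_{\varepsilon}\|_{L^{\infty}}^{(q-p)/(q-1)}$, the condition $\frac{\gamma(q-p)}{(1-\gamma)(q-1)}<\frac{p}{q-1}$---which is precisely \eqref{pq} when $\gamma=(n-3)/(n-1)$---allows one to apply Young's inequality and Lemma~\ref{l5} to absorb $\tx{M}_{\varepsilon}$ into the left-hand side. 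This freeze--iterate--absorb cycle is the ``homogenised Moser iteration'' of \cite{demi3}; your direct scheme, which tries to produce a fixed gain $\chi>1$ at each step by trading dual integrability of $F'(\nabla u)$ from Theorem~\ref{hd}, does not work as stated because the $W^{1,2}$ regularity of $V_{1,q'}(F'(\nabla u))$ gives only one fixed integrability level, not the full ladder of exponents a direct Moser scheme with unbounded $\gamma$ requires.
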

We remark that if $p=2$, the bound in \eqref{pq} corresponds precisely to the one violated by the counterexample in \cite[Theorem 6.1]{Marcellini1991}.
This highlights the sharpness of our results for polynomial-type integrals with quadratic growth from below. Notice further that in the vectorial case, and
when the ambient space dimension $n$ is larger than two, minimizers need not be locally Lipschitz. In fact, this is already the case when the integrand
is smooth and uniformly elliptic, as shown by \v{S}ver\'{a}k \& Yan \cite{Sverak} for dimensions $n \geq 3$, $N \geq 5$. More recently, Mooney \& Savin \cite{ms16}
gave an example of a smooth and uniformly elliptic integrand on $2 \times 3$ matrices, so dimensions $n=3$, $N=2$, such that the corresponding variational integral admits a minimizer
that is Lipschitz but not $\CC^1$. It seems likely that their approach can be adapted to give examples of non-Lipschitz minimizers also in those dimensions.
The remainder of the section is devoted to a description of some key technical points appearing throughout the paper that we believe could be of wider use. 
We end with a quick outline of the structure of the paper.
\subsection{Techniques}\label{tec} From the technical point of view, the main contribution of this paper is threefold. First, we introduce a new way to control the rate of
blow up of the ellipticity ratio for superlinear, possibly degenerate functionals, that we refer to as the Legendre $(p,q)$-nonuniform ellipticity condition \eqref{elr}.
As already mentioned, \eqref{elr} is slightly more restrictive than the genuine $(p,q)$-nonuniform ellipticity \eqref{elrpq}, nonetheless it covers
the same models, i.e. convex polynomials or anisotropic energies, under essentially optimal bounds on exponents $(p,q)$, cf. \eqref{pq} - in particular, in two
and three space dimensions the ellipticity ratio is allowed to blow up arbitrarily fast, meaning for polynomials that no upper bound on the degree is required.
Our results crucially rely on a subtle interplay between the gradient of minima and the stress tensor, that can be handled via convex duality. This is a natural strategy,
and convex duality tools have already been employed by Zhikov \& Kozlov \& Oleinik in \cite{JKO} in connection to homogenized elasticity theory, by Seregin \cite{Seregin93}
in the theory of plasticity, and by Carozza \& Kristensen \& Passarelli di Napoli \cite{ckp2}, and Koch \& Kristensen \cite{Koch2022} on the validity of the
Euler-Lagrange system. The key aspect in our approach is the duality between the gradient of minima and the related stress tensor that, via \eqref{elr} and rather
elementary convex duality arguments, can be made quantitative, so that the ellipticity ratio of the integrand $F$ is enlarged as a natural consequence of the combination
between convexity and extremality conditions - in particular, primal and dual problems can be handled simultaneously. The second main novelty consists in a
streamlining of Bella \& Sch\"affner's optimization trick \cite{bs}, that
allows us to work directly on spheres rather than on solid balls and eventually leads to the (surprisingly simple) proof of gradient higher differentiability
of $\WW^{1,p}$-minimizers under the constraint \eqref{pq}. This brings us to the account of the third main novelty of this paper, that is a new proof of full
regularity for vector-valued minimizers of possibly degenerate, nonuniformly elliptic functionals in two space dimensions, obtained by means of a monotonicity
argument in the spirit of Frehse \cite{fr}, or via a renormalized version of classical Gehring - Giaquinta \& Modica Lemma \cite{fwg,gm1}, following the useful point of view introduced by
Beck \& Mingione \cite{bm}. This is a well-known issue in the multidimensional Calculus of Variations, handled for $p$-Laplacian type problems via Gehring - Giaquinta \& Modica
Lemma applied right after differentiating the Euler-Lagrange system of $\F$, see previous contributions by Ne\v{c}as \cite{nec}, Giaquinta \& Modica \cite{gm1}, and Campanato \cite{c2}.
The corresponding $2d$-smoothness result for nondegenerate, genuinely $(p,q)$-nonuniformly elliptic integrals, is a more recent achievement of Bildhauer \& Fuchs \cite{bfbf},
whose strategy strongly relies on the existence in $\LL^{2}$ of second derivatives - a distinctive features of minima of nondegenerate integrals that dramatically fails
already for the degenerate $p$-Laplacian \cite{Ur}. For this reason, Theorem \ref{2dreg} comes by no means as an adaptation of the arguments in \cite{bfbf}, and
covers degenerate problems. In this respect, we offer two independent proofs of $2d$-smoothness. The first one follows the arguments outlined in the proof of
Theorem \ref{hd} to derive a monotonicity formula resulting in a logarithmic Morrey-type decay for the $\LL^{2}$-norm of certain nonlinear functions of the gradient,
eventually implying $\CC^{1}$-regularity of minima. The second one is based on a quantitative version of Gehring - Giaquinta \& Modica lemma \cite{fwg,gm1},
applied to the differentiated Euler Lagrange system after bounding the ellipticity ratio via a power of the $\LL^{\infty}$-norm of the gradient, so that it becomes
uniformly elliptic - constants are then carefully tracked at each stage of the proof. Both our arguments are inspired by the principle that a suitable control
on the rate of blow up of the ellipticity ratio associated to nonuniformly elliptic functionals gives access to a technical toolbox that has classically been
employed in the Lipschitz regularity theory for uniformly elliptic structures, such as De Giorgi's level sets technique \cite{bm,BS1,demi1,dms,dp},
homogenized Moser's iteration \cite{demi3} and now monotonicity formula and Gehring - Giaquinta \& Modica lemma. Notice that after minor modifications,
Theorem \ref{2dreg} embraces also genuine $(p,q)$-nonuniformly elliptic integrals, cf. Remark \ref{rempq} below.
\subsubsection*{Outline of the paper} 
In Section \ref{sec:pre} we describe our notation and collect a number of auxiliary results that will be helpful at various stages of the paper.
In Section \ref{cd} we derive several important consequences of our Legendre $(p,q)$-growth via convex duality arguments, and provide examples of integrands
satisfying it, notably in the form of even, convex polynomials. In Section \ref{hdd} we establish higher differentiability and higher integrability results
for a suitable nonlinear function of the gradient of minima, and of the related stress tensor. Section \ref{2d} contains the proof of full regularity in
the two-dimensional case, while finally Section \ref{scalar} contains the proof of Lipschitz regularity in the scalar setting.

\section{Preliminaries}\label{sec:pre}
In this section we display our notation, describe the main assumptions governing the functional $\F$, and collect some basic results that will be helpful throughout the paper. 
\subsection{Notation}\label{notation} In this paper, $\Omega\subset \X$, $n\ge 2$, will always be an open domain.
We denote by $c$ a general constant larger than one, possibly depending on various parameters related to the problem under investigation.
We will still denote by $c$ distinct occurrences of the constant $c$ from line to line. Specific instances will be marked with symbols $c_*,  \tilde c$ or the like.
Significant dependencies on certain parameters will be outlined by putting them in parentheses, i.e. $c\equiv c(n,p)$ means that $c$ depends on $n$ and $p$.
Sometimes we shall employ symbols "$\lesssim$", "$\approx$" or "$\gtrsim$" to indicate that an inequality holds up to constants depending on basic parameters
governing our problem. By $ B_r(x_0):= \{x \in \mathbb{R}^n  :   |x-x_0|< r\}$ we indicate the open ball with center in $x_0$ and radius $r>0$, while
$Q_{r}(x_{0}):=\left\{x\in \mathbb{R}^{n}\colon \max_{i\in\{1,\cdots,n\}}\snr{x_{i}-x_{0;i}}<r\right\}$ denotes the open cube with half-side length equal to $r$
- when clear from the context,
we shall avoid specifying radius or center, i.e., $B \equiv B_r \equiv B_r(x_0)$, $Q\equiv Q_{r}\equiv Q_{r}(x_{0})$; this happens in particular with concentric balls or concentric cubes.
In particular, with $B$ being a given ball with
radius $\textnormal{\texttt{r}}$ and $\gamma$ being a positive number, we denote by $\gamma B$ the concentric ball with radius $\gamma \textnormal{\texttt{r}}$
and by $B/\gamma := (1/\gamma)B$. For a number $t\in [1,\infty]$, its conjugate exponent $t'\in [1,\infty]$ is defined as $t':=(1-1/t)^{-1}$ if $t>1$, $t':=\infty$ if
$t=1$, and $t'=1$ if $t=\infty$, while, given $\tx{n}\in \N$, its $\tx{n}$-dimensional Sobolev exponent, and its $\tx{n}$-dimensional "lower" Sobolev
exponent are given by $t^{\ast}_{\tx{n}}:=\tx{n}t/(\tx{n}-t)$ if $1\le t<\tx{n}$ and $t^{\ast}_{\tx{n}}:=\mbox{any number larger than }t$ if
$t\ge \tx{n}$, and $t_{*;\tx{n}}:=\max\{1,\tx{n}t/(\tx{n}+t)\}$ respectively - here, $\tx{n}$ will be either chosen as $\tx{n}=n$ or $\tx{n}=n-1$.
Moreover, if $A \subset \mathbb{R}^{n}$ is a measurable set with bounded positive Lebesgue measure $\snr{A}\in (0,\infty)$, and $g \colon  A \to \mathbb{R}^{d}$,
$d\geq 1$, is a measurable map, we set $(g)_{A}\equiv \medintinrigo_{A}g(x)\dd x:= \snr{A}^{-1}\int_{A}  g(x) \dd x$ to indicate its integral average, while if
$g\in \LL^{\gamma}(B,\mathbb{R}^{d})$ for some $\gamma>1$, we shorten its averaged norm as
$\nra{g}_{\LL^{\gamma}(B)}:=\left(\medintinrigo_{B}\snr{g}^{\gamma}\dx\right)^{1/\gamma}$. Furthermore, with $z\in \mathbb{R}^{k}$, and $\mu\ge 0$,
we set $\ell_{\mu}(z):=(\mu^{2}+\snr{z}^{2})^{1/2}$. Finally, if $t$ is any parameter, we indicate by $\texttt{o}(t)$ a
quantity that is infinitesimal as $t\to 0$ or $t\to \infty$.
\subsection{Structural assumptions}\label{sa}
Throughout the paper, $F\colon \mathbb{R}^{N\times n}\to \mathbb{R}$, $n\ge 2$, $N\ge 1$, is an autonomous integrand satisfying
\eqn{assf}
$$
\left\{
\begin{array}{c}
\displaystyle
\ F \mbox{ is } \CC^{2}(\mathbb{R}^{N\times n}),\\ [8pt]\displaystyle
\ L^{-1}\ell_{\mu}(z)^{p}\le F(z)\le L\ell_{\mu}(z)^{p}+L\ell_{\mu}(z)^{q},\\ [8pt]\displaystyle
\ L^{-1}\ell_{\mu}(z)^{p-2}|\xi|^{2}\le \langle F''(z)\xi,\xi\rangle,\quad \quad \snr{F^{\prime\prime}(z)}\le L\left(1+\snr{F'(z)}^{\frac{q-2}{q-1}}\right),
\end{array}
\right.
$$
for all $z, \xi\in \M$, some absolute constants $L> 1$, $\mu\in [0,1]$, and exponents $(p,q)$ satisfying \eqref{pq}.
If $\mu=0$ in \eqref{assf}, we need to prescribe a (rather natural, see Section \ref{cd} below) limitation on the rate of blow up of the ellipticity ratio:
\begin{flalign}\label{assf.1}
\frac{\snr{F''(z)}}{\snr{z}^{p-2}}\le L_{0}+L_{0}\snr{F'(z)}^{\frac{q-p}{q-1}}\qquad \mbox{for all} \ \ z\in \mathbb{R}^{N\times n},
\end{flalign}
where $L_{0}>1$ is an absolute constant. Moreover, \eqref{assf}$_{3}$ assures the strict convexity of $F$, and, combined with $\eqref{assf}_{2}$ guarantees also that
\eqn{f'}
$$
|F^{\prime}(z)| \leq c\ell_{\mu}(z)^{p-1}+c\ell_{\mu}(z)^{q-1} \ \Longrightarrow \ \ell_{\mu}(z)\ge \begin{cases}
\ c\snr{F'(z)}^{\frac{1}{p-1}}\quad &\mbox{if} \ \ 0\le \ell_{\mu}(z)\le 1\\
\ c\snr{F'(z)}^{\frac{1}{q-1}}\quad &\mbox{if} \ \ \ell_{\mu}(z)>1,
\end{cases}
$$
for $c\equiv c(L,p,q)$, see \cite[Lemma 2.1]{ma1}. Let us point out that in the nondegenerate case $\mu>0$ in \eqref{assf},
the constraint in \eqref{assf.1} comes as a consequence of $\eqref{assf}_{3}$ and \eqref{f'} up to constants depending on $(L,p,q,\mu)$, so overall,
\eqn{assf.2}
$$
\frac{\snr{F''(z)}}{\ell_{\mu}(z)^{p-2}}\le L_{\mu}+L_{\mu}\snr{F'(z)}^{\frac{q-p}{q-1}},
$$
with $L_{\mu}\equiv L_{0}$ if $\mu=0$ and $L_{\mu}=\mu^{2-p}c(L,p,q)$ if $\mu>0$ - in other words, thanks to \eqref{assf.1} the dependency
on $\mu$ occurs only if $\mu>0$. 
\begin{remark}
\emph{Assumption \eqref{assf.1} deserves some comment in relation to $\eqref{assf}_{3}$ and to \eqref{assf.2}, where a dependency on $\mu$ appears in the constants.
The restriction in \eqref{assf.1} is needed for general degenerate integrands to control
the rate of blow up of the ellipticity ratio, i.e., $\mu=0$ in $\eqref{assf}_{3}$, while if $\mu>0$ it is not needed at all as the amount of informations contained
in $\eqref{assf}_{3}$ suffices to derive the bound in \eqref{assf.2} (with explicit dependency on $\mu$). This seems to be unavoidable, given the strong inhomogeneity
displayed in the growth of $F''$, cf. \eqref{assf}$_{3}$. On the other hand, if more homogeneous growth conditions are imposed on $F''$, like those usually appearing
in the literature on $(p,q)$-nonuniformly elliptic problems \cite{bm,BS1,demi1,demi3,dp}, prescribing that
$ \snr{F''(z)}\le L \ell_{\mu}(z)^{p-2}+L\ell_{\mu}(F'(z))^{\frac{q-2}{q-1}}$ replaces the right-hand side of $\eqref{assf}_{3}$, then \eqref{assf.1} can
be disregarded and no dependency on $\mu$ appears in \eqref{assf.2}.}
\end{remark}
\begin{remark}
    \emph{If $p=q$ our results are classical, see \cite{c2,giu} and references therein, so given that our approach is of interpolative nature, to avoid trivialities throughout the paper we shall permanently work assuming the strict inequality $p<q$.}
\end{remark}

\subsection{Functional analytical tools for degenerate problems} The vector field $V_{\mu,\gamma}\colon \M\to \M$, defined as
\eqn{vpvqn}
$$
V_{\mu,\gamma}(z):=(\mu^{2}+\snr{z}^{2})^{\frac{\gamma-2}{4}}z,\qquad \gamma\in (1,\infty), \ \ \mu\in [0,1],
$$
for all $z\in \mathbb{R}^{N\times n}$, which encodes the scaling features of the $p$-Laplace operator, is a useful tool to handle singular or degenerate problems.
A couple of helpful related equivalences are the following:
\begin{flalign}\label{Vm}
\left\{
\begin{array}{c}
\displaystyle
\ \snr{V_{\mu,\gamma}(z_{1})-V_{\mu,\gamma}(z_{2})}\approx (\mu^{2}+\snr{z_{1}}^{2}+\snr{z_{2}}^{2})^{(\gamma-2)/4}\snr{z_{1}-z_{2}} \\ [12pt]\displaystyle
\ \int_0^1 (\mu^2+|z_{2}+\lambda(z_{1}-z_{2})|^2)^{\frac{\gamma-2}{2}}\dd \lambda\approx (\mu^2+|z_{1}|^2+|z_{2}|^2)^{\frac{\gamma-2}{2}}\\ [12pt]\displaystyle
\nra{V_{\mu,\gamma}(w)-(V_{\mu,\gamma}(w))_{A}}_{\LL^{p}(A)}\approx
\nra{V_{\mu,\gamma}(w)-V_{\mu,\gamma}((w)_{A})}_{\LL^{p}(A)},
\end{array}
\right.
\end{flalign}
holding for any $1<p<\infty$, $z_{1},z_{2}\in \mathbb{R}^{N\times n}$, all measurable subsets $A\subset \mathbb{R}^{n}$ (resp. $A\subset \mathbb{R}^{n-1}$) with
positive, finite $n$-dimensional Lebesgue measure (resp. $(n-1)$-dimensional Hausdorff measure), functions $w\in \LL^{p\gamma/2}(A,\mathbb{R}^{N\times n})$, up to constants
depending only on $(n,N,\gamma,p)$, cf. \cite[Section 2]{ha}, and \cite[(2.6)]{gm}. Let us also recall Sobolev embedding theorem and Sobolev-Poincar\'e
inequalities on spheres, cf. \cite[Section 3]{bs}. 
\begin{lemma}
Let 
$\partial B_{\rr}(x_{0})$ be an $(n-1)$-dimensional sphere with $n\ge 2$, $k\in \mathbb{N}$ be a number, and $w\in \WW^{1,1}(\partial B_{\rr}(x_{0}),\mathbb{R}^{k})$ be a function.
Then
\begin{itemize}
\item if $w\in \WW^{1,2}(\partial B_{\rr}(x_{0}),\mathbb{R}^{k})$, then
\eqn{spv}
$$
\nra{w-(w)_{\partial B_{\rr}(x_{0})}}_{\LL^{2^{*}_{n-1}}(\partial B_{\rr}(x_{0}))}\le c \rr\nra{\nabla w}_{\LL^{2}(\partial B_{\rr}(x_{0}))},
$$
with $c\equiv c(n,k)$;
\item if $w\in \WW^{1,p}(\partial B_{\rr}(x_{0}),\mathbb{R}^{k})$ for some $1<p<\infty$, then
\eqn{spv.2}
$$
\nra{w}_{\LL^{p}(\partial B_{\rr}(x_{0}))}\le c\rr\nra{\nabla w}_{\LL^{p_{*;n-1}}(\partial B_{\rr}(x_{0}))}+c\nra{w}_{\LL^{p_{*;n-1}}(\partial B_{\rr}(x_{0}))},
$$
for $c\equiv c(n,p,k)$.
\end{itemize}
\end{lemma}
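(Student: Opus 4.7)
The plan is to reduce both inequalities to their counterparts on the unit sphere $\partial B_{1}(0)$ via a rescaling argument, and then to invoke the classical Sobolev-Poincar\'e inequalities on the compact smooth Riemannian manifold $\mathbb{S}^{n-1}$. Concretely, given $w\in \WW^{1,1}(\partial B_{\rr}(x_{0}),\mathbb{R}^{k})$, I would introduce $\tilde{w}\colon \partial B_{1}(0)\to \mathbb{R}^{k}$ defined by $\tilde{w}(y):=w(x_{0}+\rr y)$. The change of variables $x=x_{0}+\rr y$ transforms the surface measure on $\partial B_{\rr}(x_{0})$ into $\rr^{n-1}$ times the surface measure on $\partial B_{1}(0)$, so this factor cancels upon passing to integral averages, yielding
$$
\nra{\tilde{w}}_{\LL^{q}(\partial B_{1})}=\nra{w}_{\LL^{q}(\partial B_{\rr}(x_{0}))}\qquad \mbox{and}\qquad (\tilde{w})_{\partial B_{1}}=(w)_{\partial B_{\rr}(x_{0})}
$$
for every $q\in [1,\infty)$. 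Moreover, by the chain rule the tangential gradients satisfy $|\nabla \tilde{w}(y)|=\rr\,|\nabla w(x_{0}+\rr y)|$, so that $\nra{\nabla \tilde{w}}_{\LL^{q}(\partial B_{1})}=\rr\,\nra{\nabla w}_{\LL^{q}(\partial B_{\rr}(x_{0}))}$.

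For the first inequality, I would then apply the classical Poincar\'e-Sobolev inequality on $\mathbb{S}^{n-1}$ to $\tilde{w}\in \WW^{1,2}(\partial B_{1},\mathbb{R}^{k})$, producing a constant $c\equiv c(n,k)$ with
$$
\nra{\tilde{w}-(\tilde{w})_{\partial B_{1}}}_{\LL^{2^{*}_{n-1}}(\partial B_{1})}\le c\,\nra{\nabla \tilde{w}}_{\LL^{2}(\partial B_{1})}.
$$
Inserting the scaling identities displayed above gives \eqref{spv} at once. In the borderline low-dimensional cases $n\in\{2,3\}$ the exponent $2^{*}_{n-1}$ is understood as any finite exponent larger than $2$, in accordance with the convention fixed in Section~\ref{notation}, and the argument is identical upon using the corresponding Sobolev embedding on the circle or on the $2$-sphere, which hold due to the borderline case of the embedding $\WW^{1,2}\hookrightarrow \LL^{q}$ on a compact $1$- or $2$-dimensional manifold.

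For the second inequality, the relevant input is the Sobolev embedding $\WW^{1,p_{*;n-1}}(\mathbb{S}^{n-1},\mathbb{R}^{k})\hookrightarrow \LL^{p}(\mathbb{S}^{n-1},\mathbb{R}^{k})$, which is sharp in view of the identity $(p_{*;n-1})^{*}_{n-1}=p$, verified by a direct computation from the definition of the lower Sobolev exponent; in the borderline situation $p_{*;n-1}=1$ (which happens for small $p$ when $n\ge 3$, and for every finite $p$ when $n=2$) one instead uses $\WW^{1,1}(\mathbb{S}^{n-1})\hookrightarrow \LL^{p}(\mathbb{S}^{n-1})$, still valid in the admissible range. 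Applying the embedding to $\tilde{w}$ produces
$$
\nra{\tilde{w}}_{\LL^{p}(\partial B_{1})}\le c\,\nra{\nabla \tilde{w}}_{\LL^{p_{*;n-1}}(\partial B_{1})}+c\,\nra{\tilde{w}}_{\LL^{p_{*;n-1}}(\partial B_{1})}
$$
with $c\equiv c(n,p,k)$; substituting the scaling relations and collecting the $\rr$-factor that accompanies only the gradient term then recovers \eqref{spv.2}. No substantive obstacle is to be expected: these estimates are scale-invariant consequences of the classical Sobolev theory on $\mathbb{S}^{n-1}$, the only subtle point being the careful bookkeeping of integral averages and tangential gradients under dilation, both of which follow routinely from the surface area formula and the chain rule.
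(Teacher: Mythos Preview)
Your proposal is correct and follows the standard route: rescale to the unit sphere and invoke the classical Sobolev and Sobolev--Poincar\'e embeddings on the compact manifold $\mathbb{S}^{n-1}$, tracking the scaling of averaged norms and tangential gradients. The paper itself does not supply a proof of this lemma; it is merely stated with a reference to \cite[Section 3]{bs}, so there is nothing to compare against beyond noting that your argument is exactly the kind of routine verification one would expect behind such a citation.
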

We will also need an "unbalanced" version of Poincar\'e inequality, \cite[Section 3]{bs}.
\begin{lemma}
Let $B_{\rr}(x_{0})\subset \mathbb{R}^{n}$, $n\ge 2$, be a ball, $\sigma>0$ a number, and $w\in \WW^{1,2}(B_{\rr}(x_{0}))$ be any function. Then
\eqn{spv.3}
$$
\nra{w}_{\LL^{2}(B_{\rr}(x_{0}))}\le c\rr\nra{\nabla w}_{\LL^{2}(B_{\rr}(x_{0}))}+c\nra{w^{\sigma}}_{\LL^{1}(B_{\rr}(x_{0}))}^{\frac{1}{\sigma}},
$$
with $c\equiv c(n,\sigma)$.
\end{lemma}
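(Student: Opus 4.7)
The plan is to decompose $w$ into its integral mean and the oscillation around it,
$$\nra{w}_{\LL^{2}(B_{\rr}(x_{0}))}\le \nra{w-(w)_{B_{\rr}(x_{0})}}_{\LL^{2}(B_{\rr}(x_{0}))}+|(w)_{B_{\rr}(x_{0})}|.$$
The oscillation piece is immediately controlled by the classical Poincar\'e inequality, giving $\nra{w-(w)_{B_{\rr}(x_{0})}}_{\LL^{2}(B_{\rr}(x_{0}))}\le c(n)\rr\nra{\nabla w}_{\LL^{2}(B_{\rr}(x_{0}))}$, which matches the first term on the right-hand side of \eqref{spv.3}. The remaining task is to bound the mean $|(w)_{B_{\rr}(x_{0})}|$ by the two contributions appearing on the right-hand side of \eqref{spv.3}.

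To this end, I would set $M:=\nra{w^{\sigma}}_{\LL^{1}(B_{\rr}(x_{0}))}^{1/\sigma}$ and apply Chebyshev's inequality at level $2M$: the set $E:=\{x\in B_{\rr}(x_{0}): |w(x)|\le 2M\}$ satisfies $|E|\ge (1-2^{-\sigma})|B_{\rr}(x_{0})|$, where the constant $1-2^{-\sigma}>0$ depends only on $\sigma$. Averaging on $E$ yields the trivial bound $|(w)_{E}|\le 2M$, and the identity
$$(w)_{B_{\rr}(x_{0})}-(w)_{E}=\frac{1}{|E|}\int_{E}\left((w)_{B_{\rr}(x_{0})}-w\right)\dx$$
combined with the comparability of $|E|$ to $|B_{\rr}(x_{0})|$ (with $\sigma$-dependent constant) and H\"older's inequality produces
$$|(w)_{B_{\rr}(x_{0})}-(w)_{E}|\le c(\sigma)\nra{w-(w)_{B_{\rr}(x_{0})}}_{\LL^{2}(B_{\rr}(x_{0}))}\le c(n,\sigma)\rr\nra{\nabla w}_{\LL^{2}(B_{\rr}(x_{0}))},$$
where the last step invokes Poincar\'e again. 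Combining with the triangle inequality $|(w)_{B_{\rr}(x_{0})}|\le |(w)_{E}|+|(w)_{B_{\rr}(x_{0})}-(w)_{E}|$ delivers the desired estimate on the mean, and inserting it into the initial decomposition proves \eqref{spv.3}.

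The main delicacy lies in the range $\sigma<1$, where $w\mapsto \nra{w^{\sigma}}_{\LL^{1}}^{1/\sigma}$ is not a norm and Jensen's inequality points the wrong way, namely $\nra{w^{\sigma}}_{\LL^{1}}^{1/\sigma}\le \nra{w}_{\LL^{1}}$, so that no direct H\"older-type comparison of this quantity with $\nra{w}_{\LL^{1}}$ is available. The level-set/Chebyshev approach above sidesteps the issue entirely because it only exploits $M$ through the \emph{measure} of the sublevel set $\{|w|\le 2M\}$, a mechanism that is insensitive to whether $\sigma$ exceeds or falls below one. As an alternative one could run a scaling-plus-compactness argument: after normalizing $\rr=1$, any putative counterexample sequence $w_{k}$ with $\nra{w_{k}}_{\LL^{2}}=1$ and with both $\nra{\nabla w_{k}}_{\LL^{2}}$ and $\nra{w_{k}^{\sigma}}_{\LL^{1}}^{1/\sigma}$ infinitesimal would be $\LL^{2}$-precompact by Rellich, its limit would be a constant by vanishing of the gradient, and Fatou applied to $|w_{k}|^{\sigma}$ would force this constant to vanish, contradicting the normalization.
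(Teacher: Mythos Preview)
Your argument is correct. The paper does not supply its own proof of this lemma but simply refers to \cite[Section 3]{bs}, so there is no in-paper proof to compare against. Your Chebyshev/level-set route is in fact the standard way to establish such unbalanced Poincar\'e inequalities: it is essentially the argument one finds in the cited reference, and your observation that it works uniformly in the regime $\sigma<1$ (where $t\mapsto t^\sigma$ is concave and Jensen is unhelpful) is exactly the point. The compactness alternative you sketch is also valid but less quantitative.
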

We further record classical Sobolev-Morrey embedding theorem with sharp bounding constant, obtained by applying \cite[Theorem 1.1]{c} componentwise.
\begin{proposition}\label{shso}
Let $w\in W^{1,p}(\mathbb{R}^{n},\mathbb{R}^{k})$ with $p>n\ge 2$, $k\ge 1$ be a function such that $\snr{\supp(w)}<\infty$. Then
$$
\nr{w}_{L^{\infty}(\mathbb{R}^{n})}\le k^{1/2}n^{-1/p}\omega_{n}^{-1/n}\left(\frac{p-1}{p-n}\right)^{1/p'}\snr{\supp(w)}^{\frac{1}{n}-\frac{1}{p}}\nr{\nabla w}_{L^{p}(\mathbb{R}^{n})}.
$$
\end{proposition}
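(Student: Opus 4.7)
The plan is to reduce the vector-valued claim to Cianchi's scalar sharp Sobolev-Morrey inequality \cite[Theorem 1.1]{c}, which states that for any scalar $v\in W^{1,p}(\mathbb{R}^{n})$ with $p>n$ and $|\supp(v)|<\infty$,
$$
\nr{v}_{L^{\infty}(\mathbb{R}^{n})}\le n^{-1/p}\omega_{n}^{-1/n}\left(\frac{p-1}{p-n}\right)^{1/p'}\snr{\supp(v)}^{\frac{1}{n}-\frac{1}{p}}\nr{\nabla v}_{L^{p}(\mathbb{R}^{n})},
$$
with constant known to be sharp. I would then apply this to each scalar component $w^{i}$ of $w=(w^{1},\ldots,w^{k})$.

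First I would observe the two elementary monotonicity facts that allow the scalar bound to be upgraded uniformly in $i$. Namely, $\supp(w^{i})\subseteq \supp(w)$ so that $\snr{\supp(w^{i})}\le \snr{\supp(w)}$; and since by definition $\snr{\nabla w(x)}^{2}=\sum_{j=1}^{k}\snr{\nabla w^{j}(x)}^{2}$, the pointwise inequality $\snr{\nabla w^{i}(x)}\le \snr{\nabla w(x)}$ holds for every $i$ and a.e. $x$, which integrates to $\nr{\nabla w^{i}}_{L^{p}(\mathbb{R}^{n})}\le \nr{\nabla w}_{L^{p}(\mathbb{R}^{n})}$. Plugging these into the scalar inequality applied to $w^{i}$ and keeping track of Cianchi's sharp prefactor yields, for every $i\in\{1,\ldots,k\}$ and a.e. $x$,
$$
\snr{w^{i}(x)}^{2}\le \left(n^{-1/p}\omega_{n}^{-1/n}\left(\frac{p-1}{p-n}\right)^{1/p'}\right)^{\!2}\snr{\supp(w)}^{2\left(\frac{1}{n}-\frac{1}{p}\right)}\nr{\nabla w}_{L^{p}(\mathbb{R}^{n})}^{2}.
$$

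Summing this scalar bound over $i=1,\ldots,k$ and using the defining identity $\snr{w(x)}^{2}=\sum_{i=1}^{k}\snr{w^{i}(x)}^{2}$ produces a factor $k$ on the right-hand side, and extracting the square root yields precisely the asserted inequality with the $k^{1/2}$ prefactor.

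No real obstacle is present since the statement is a direct corollary of the cited scalar estimate; the only point requiring attention is that the passage from the scalar to the vector setting must not deteriorate the sharp dimensional constant. This is ensured by dominating each $\nr{\nabla w^{i}}_{L^{p}}$ by the full gradient norm $\nr{\nabla w}_{L^{p}}$ via the Frobenius ($\ell^{2}$) structure on matrices, so that the $k$-dependence enters only through the $\ell^{2}$-summation of the $k$ pointwise scalar bounds, yielding the sharp factor $k^{1/2}$ and not a worse power of $k$ that a cruder $\ell^{p}$-type summation would introduce.
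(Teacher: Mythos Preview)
Your proposal is correct and matches the paper's approach exactly: the paper states that the proposition is ``obtained by applying \cite[Theorem 1.1]{c} componentwise'', and your argument spells out precisely this componentwise application together with the two monotonicity facts $\snr{\supp(w^{i})}\le\snr{\supp(w)}$ and $\nr{\nabla w^{i}}_{L^{p}}\le\nr{\nabla w}_{L^{p}}$ that make it work. The $\ell^{2}$-summation you perform is what produces the $k^{1/2}$ factor in the stated constant.
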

We conclude this section with the "simple but fundamental" iteration lemma, \cite[Chapter 6]{giu}.
\begin{lemma}\label{l5}
Let $h\colon [\rr_{0},\rr_{1}]\to \mathbb{R}$ be a non-negative and bounded function, and let $\theta \in (0,1)$, $A,B,\gamma_{1},\gamma_{2}\ge 0$ be numbers.
Assume that $h(t)\le \theta h(s)+A(s-t)^{-\gamma_{1}}+B(s-t)^{-\gamma_{2}}$ holds for all $\rr_{0}\le t<s\le \rr_{1}$. Then the following inequality holds
$h(\rr_{0})\le c(\theta,\gamma_{1},\gamma_{2})[A(\rr_{1}-\rr_{0})^{-\gamma_{1}}+B(\rr_{1}-\rr_{0})^{-\gamma_{2}}].$
\end{lemma}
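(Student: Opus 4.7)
The plan is to iterate the given inequality along a geometrically contracting sequence of radii in $[\rr_{0},\rr_{1}]$. First I would fix a parameter $\tau\in(0,1)$, to be selected shortly, and set $t_{0}:=\rr_{0}$, $t_{i+1}:=t_{i}+(1-\tau)\tau^{i}(\rr_{1}-\rr_{0})$, so that the increments $t_{i+1}-t_{i}=(1-\tau)\tau^{i}(\rr_{1}-\rr_{0})$ form a geometric series and $t_{i}\nearrow \rr_{1}$. Applying the hypothesis at $t=t_{i}$, $s=t_{i+1}$ then yields
$$
h(t_{i})\le \theta\, h(t_{i+1})+\frac{A\,\tau^{-i\gamma_{1}}}{(1-\tau)^{\gamma_{1}}(\rr_{1}-\rr_{0})^{\gamma_{1}}}+\frac{B\,\tau^{-i\gamma_{2}}}{(1-\tau)^{\gamma_{2}}(\rr_{1}-\rr_{0})^{\gamma_{2}}}.
$$

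Iterating the above inequality $k$ times I would obtain
$$
h(\rr_{0})\le \theta^{k}h(t_{k})+\frac{A}{(1-\tau)^{\gamma_{1}}(\rr_{1}-\rr_{0})^{\gamma_{1}}}\sum_{i=0}^{k-1}\bigl(\theta\tau^{-\gamma_{1}}\bigr)^{i}+\frac{B}{(1-\tau)^{\gamma_{2}}(\rr_{1}-\rr_{0})^{\gamma_{2}}}\sum_{i=0}^{k-1}\bigl(\theta\tau^{-\gamma_{2}}\bigr)^{i}.
$$
The decisive step is to pick $\tau\in(0,1)$ sufficiently close to $1$ that both $\theta\tau^{-\gamma_{1}}<1$ and $\theta\tau^{-\gamma_{2}}<1$ simultaneously; this is possible because $\theta\in(0,1)$ and $\tau^{-\gamma_{j}}\to 1$ as $\tau\to 1^{-}$. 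With such a $\tau$ both geometric series in $i$ converge, with sums depending only on $\theta,\gamma_{1},\gamma_{2}$, while the boundedness of $h$ combined with $\theta^{k}\to 0$ forces $\theta^{k}h(t_{k})\to 0$. Letting $k\to\infty$ delivers the announced estimate with $c\equiv c(\theta,\gamma_{1},\gamma_{2})$.

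The only obstacle is purely bookkeeping: verifying that the chosen $\tau$ makes both ratios $\theta\tau^{-\gamma_{j}}$ strictly less than one, and then extracting a constant $c$ whose dependence is solely on $\theta,\gamma_{1},\gamma_{2}$, with no hidden leakage from $A$, $B$, $\rr_{1}-\rr_{0}$, or $\sup_{[\rr_{0},\rr_{1}]}h$. The global boundedness of $h$ is used exclusively to kill the boundary contribution $\theta^{k}h(t_{k})$ in the limit; no further analytic ingredient enters the argument.
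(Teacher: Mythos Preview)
Your argument is correct and is exactly the standard proof of this iteration lemma; the paper does not give its own proof but simply cites \cite[Chapter 6]{giu}, where precisely this geometric-sequence iteration is carried out. There is nothing to add.
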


\section{Growth conditions and convex duality}\label{cd}
In this section we recall certain elementary notions from convex analysis \cite{EkeTem,hl,HUL,Rockafellar}, and derive important
consequences of the structural assumptions \eqref{assf}. We start by recording some basic facts on convex functions.
The Fenchel conjugate of an integrand $F \colon \M \to \R$ is defined as the extended real-valued integrand $
\M\ni \xi\mapsto F^{\ast}( \xi ) := \sup_{z \in \M} \left( \langle z , \xi\rangle - F(z) \right).$ Recall that the Fenchel conjugate $F^\ast$
is real-valued precisely when $F$ is super-linear
at infinity, so precisely when $F(z)/|z| \to \infty$ as $|z| \to \infty$. In fact there is perfect symmetry here. Recall
that the convex and lower semicontinuous envelope of $F$ coincides with the double Fenchel conjugate
$F^{\ast \ast} \equiv ( F^{\ast})^{\ast}$, and so, when $F$ is convex, it is real-valued and super-linear precisely
when its Fenchel conjugate $F^\ast$ is so. For later reference, we also recall the Fenchel-Young inequality stating that
\eqn{fy}
$$
F(z) + F^{\ast}(\xi) \geq \langle z, \xi\rangle
$$
holds for all $z$, $\xi \in \M$. It is a direct consequence of the definition of Fenchel conjugation and it holds for any proper integrand $F$.
Equality in \eqref{fy} holds precisely when $\xi \in \partial F(z)$, the subdifferential of $F$ at $z$. In particular, we emphasize that if $F$
is $\CC^1$-regular, equality holds precisely when $F$ is convex at $z$ and $\xi = F^{\prime}(z)$. It is worth highlighting that if $F$ satisfies
the $(p,q)$-growth conditions $\eqref{assf}_{2}$, these are transformed under Fenchel conjugation into
\eqn{coercivdual}
$$
\frac{1}{c}\snr{\xi}^{q^{\prime}}-c\le F^{\ast}(\xi)\le c\snr{\xi}^{p^{\prime}}+c,
$$
for some $c\equiv c(L,p,q)\ge 1$, , cf. \cite[Section 2]{ckp} for more details. Let us point out that the integrand $F$ is super-linear, strictly
convex and $\CC^1$-regular precisely when its Fenchel conjugate $F^{\ast}$ is so. In this case we also have that both derivatives are homeomorphisms of $\M$
and that $(F^{\ast})^{\prime} = ( F^{\prime})^{-1}$. Let us record one of the implications, namely that when $F$ is real-valued, super-linear, strictly convex
and $\CC^1$-regular, then so is $F^\ast$ and
\begin{equation}\label{derivatives}
  ( F^{\ast} )^{\prime}( F^{\prime}(z) ) = z \quad \mbox{and} \quad F^{\prime} ( ( F^{\ast} )^{\prime}( \xi ) ) = \xi \qquad\mbox{for all} \ \ z,\xi \in \M .
\end{equation}
We further record the duality relations that exist between strict convexity and smoothness for an integrand and its Fenchel conjugate. 
\begin{lemma}\label{strictcvx}
Let $F \colon \M \to \R \cup \{ \infty \}$ be a convex and proper integrand. Then $F$ is real-valued, super-linear and strictly convex if and only if
its Fenchel conjugate $F^\ast$ is real-valued, super-linear and $\CC^1$-regular.
\end{lemma}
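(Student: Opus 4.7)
The plan is to base everything on the symmetry of Fenchel conjugation, together with the equality case in the Fenchel-Young inequality \eqref{fy}. Since a real-valued convex integrand on $\M$ is automatically continuous, in either implication $F$ (respectively $F^{\ast}$) is lower semicontinuous as soon as it is real-valued; we may therefore freely use the biconjugate identity $F = F^{\ast\ast}$, so that the two implications become formally symmetric under conjugation.

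For the forward direction, suppose $F$ is real-valued, super-linear, and strictly convex. Fix $\xi \in \M$. Super-linearity of $F$ makes $z \mapsto \langle z, \xi \rangle - F(z)$ continuous and tend to $-\infty$ as $|z| \to \infty$, so the supremum defining $F^{\ast}(\xi)$ is finite and attained; hence $F^{\ast}$ is real-valued. By symmetry of the biconjugation (applied to the pair $F^{\ast}$ and $F^{\ast\ast} = F$), real-valuedness of $F$ further implies super-linearity of $F^{\ast}$. Strict convexity of $F$ makes $z \mapsto \langle z, \xi \rangle - F(z)$ strictly concave, so its maximizer $z_{\xi}$ is unique; the equality case in \eqref{fy} identifies $\partial F^{\ast}(\xi)$ with the set of such maximizers, whence $\partial F^{\ast}(\xi) = \{z_{\xi}\}$ for every $\xi$. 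A finite convex function with singleton subdifferential everywhere is differentiable at each point, and a convex function differentiable on an open set is automatically $\CC^{1}$ there; thus $F^{\ast} \in \CC^{1}(\M)$.

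For the converse, assume $F^{\ast}$ is real-valued, super-linear, and $\CC^{1}$-regular. Repeating the previous paragraph with the roles of $F$ and $F^{\ast}$ swapped shows that $F = F^{\ast\ast}$ is real-valued and super-linear. To obtain strict convexity I argue by contradiction: if $F$ fails to be strictly convex, there exist $z_{1} \neq z_{2}$ such that $F$ is affine on the segment $[z_{1}, z_{2}]$; any supporting affine functional to $F$ at an interior point of this segment provides a single $\xi \in \M$ that lies simultaneously in $\partial F(z_{1})$ and $\partial F(z_{2})$. By \eqref{fy}, both $z_{1}$ and $z_{2}$ then belong to $\partial F^{\ast}(\xi)$, which however equals the singleton $\{(F^{\ast})'(\xi)\}$ by $\CC^{1}$-regularity of $F^{\ast}$, contradicting $z_{1} \neq z_{2}$.

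The main technical point to handle carefully is the standard convex-analytic passage from pointwise differentiability to $\CC^{1}$-regularity for a finite convex function, which hinges on convexity forcing continuity of the gradient map once it is defined everywhere; apart from this, the proof reduces to a direct unpacking of definitions via \eqref{fy} and \eqref{derivatives}, with the identity $F = F^{\ast\ast}$ making the two halves genuinely dual and thereby preventing duplication of effort.
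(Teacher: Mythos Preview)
The paper does not actually supply a proof of this lemma: it is simply ``recorded'' as a known duality fact, with the surrounding discussion pointing to standard references such as \cite{Rockafellar,EkeTem,HUL}. Your argument is the standard one and is essentially correct.

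There is one small circularity worth tightening in the converse direction. You invoke $F = F^{\ast\ast}$ in the sentence ``shows that $F = F^{\ast\ast}$ is real-valued and super-linear,'' but your justification for the biconjugate identity is that $F$ is real-valued (hence continuous, hence lsc) --- which is precisely what you are trying to conclude. The clean fix is: first deduce from super-linearity of $F^{\ast}$ that $F^{\ast\ast}$ is everywhere finite; since $F^{\ast\ast}$ is the lower semicontinuous hull of the convex function $F$, one has $\operatorname{dom}(F^{\ast\ast}) \subseteq \overline{\operatorname{dom}(F)}$, so $\overline{\operatorname{dom}(F)} = \M$, and a convex subset of $\M$ with dense (hence full) closure must be all of $\M$. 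This gives $F$ real-valued first; only then do you get continuity, lsc, and $F = F^{\ast\ast}$, after which your strict-convexity argument via the singleton subdifferential $\partial F^{\ast}(\xi) = \{(F^{\ast})'(\xi)\}$ goes through exactly as written.
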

The following well-known result \cite[Corollary 4.2.10]{HUL} and its consequences play an important role in this paper.
\begin{lemma}\label{strongcvx}
Let $F\in \CC^{1}(\M)$ be super-linear and strictly convex. If $F\in \CC^{2}(B_{r}(z_{0}))$ for some ball $B_{r}(z_{0})\subset \M$, and
$\mathrm{det} (F^{\prime \prime}(z)) \neq 0$ for all $z \in B_{r}(z_{0})$, then the Fenchel conjugate $F^\ast $ is super-linear, strictly convex
and $\CC^1$-regular. In addition, $F^\ast\in \CC^{2}(F^{\prime}(B_{r}(z_{0})))$ with $( F^\ast )^{\prime \prime}(F^{\prime}(z) ) = F^{\prime \prime}(z)^{-1}$ for all $z\in B_{r}(z_{0})$.
\end{lemma}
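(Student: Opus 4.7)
The first assertion—that $F^{\ast}$ is real-valued, super-linear, strictly convex and $\CC^1$-regular—is essentially a corollary of Lemma \ref{strictcvx} applied twice. Applying the forward implication with our hypotheses on $F$ yields that $F^{\ast}$ is real-valued, super-linear and $\CC^1$-regular. To obtain the strict convexity of $F^{\ast}$, I would apply the backward implication of Lemma \ref{strictcvx} with $F^{\ast}$ playing the role of the convex integrand: here one uses that $F^{\ast}$ is convex and proper, that $F^{\ast\ast}=F$ (because $F\in \CC^1(\M)$ is convex and lower semicontinuous), and that $F$ itself is real-valued, super-linear and $\CC^1$-regular.

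For the second part, the plan is to combine the globally valid duality identity $(F^{\ast})^{\prime} = (F^{\prime})^{-1}$ recorded in \eqref{derivatives} with the classical inverse function theorem on $B_{r}(z_{0})$. Since $F$ and $F^{\ast}$ are both super-linear, strictly convex and $\CC^1$ by the first part, the map $F^{\prime}\colon \M\to \M$ is a homeomorphism whose inverse is exactly $(F^{\ast})^{\prime}$. Restricting attention to $B_{r}(z_{0})$, the $\CC^1$ inverse function theorem applies because $F\in \CC^{2}(B_{r}(z_{0}))$ and $\det F^{\prime\prime}(z)\neq 0$ for every $z\in B_{r}(z_{0})$. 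This yields, on the one hand, that $F^{\prime}(B_{r}(z_{0}))$ is open, and on the other hand that the local $\CC^1$ inverse of $F^{\prime}$ near each point of $B_{r}(z_{0})$ is smooth. By global injectivity of $F^{\prime}$, this local inverse must agree with $(F^{\ast})^{\prime}$ throughout $F^{\prime}(B_{r}(z_{0}))$, so $(F^{\ast})^{\prime}$ is of class $\CC^1$ on $F^{\prime}(B_{r}(z_{0}))$; equivalently, $F^{\ast}\in \CC^{2}(F^{\prime}(B_{r}(z_{0})))$.

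To obtain the formula for $(F^{\ast})^{\prime\prime}$, I would differentiate the identity $(F^{\ast})^{\prime}(F^{\prime}(z))=z$ (from \eqref{derivatives}) with respect to $z\in B_{r}(z_{0})$, using the chain rule now that both $(F^{\ast})^{\prime}$ and $F^{\prime}$ are known to be $\CC^1$ at the relevant points. This gives the matrix identity $(F^{\ast})^{\prime\prime}(F^{\prime}(z))\,F^{\prime\prime}(z)=\mathrm{Id}$, and the non-degeneracy hypothesis $\det F^{\prime\prime}(z)\neq 0$ then furnishes the required relation $(F^{\ast})^{\prime\prime}(F^{\prime}(z))=F^{\prime\prime}(z)^{-1}$.

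The only delicate point in the whole argument is the transition from the \emph{local} inverse function theorem to the \emph{globally} valid identification $(F^{\ast})^{\prime}=(F^{\prime})^{-1}$: one needs the global homeomorphism property, coming from the duality in Lemma \ref{strictcvx} together with the first assertion of the present lemma, to guarantee that the local $\CC^1$ inverse produced by the inverse function theorem actually coincides with the dual gradient $(F^{\ast})^{\prime}$ on the whole open set $F^{\prime}(B_{r}(z_{0}))$. Once this is settled, the remaining steps are routine applications of the chain rule and of the non-singularity of $F^{\prime\prime}$.
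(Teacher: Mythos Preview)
Your argument is correct. The paper does not actually supply a proof of this lemma: it is stated as a well-known result and attributed to \cite[Corollary 4.2.10]{HUL}, so there is no in-text proof to compare against. Your approach---invoking Lemma~\ref{strictcvx} in both directions for the qualitative part, then applying the inverse function theorem on $B_{r}(z_{0})$ and identifying the local inverse with $(F^{\ast})^{\prime}$ via the global homeomorphism \eqref{derivatives}---is exactly the standard way such results are established in the convex-analysis literature, and it matches the spirit of the cited reference.
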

We now turn our attention to the growth/ellipticity condition \eqref{assf}$_{3}$, that is a quantitative form of Legendreness for real-valued
$\CC^2$-regular integrands. Recall indeed that Rockafellar in \cite[Chapt. V]{Rockafellar} highlighted a special class
of convex functions, called there functions of Legendre type, for their useful features in optimization theory. In the context 
considered in this paper, these are the integrands that, together with their Fenchel conjugates, are real-valued and strictly convex.
Here it is clear that the lower bound in \eqref{assf}$_3$ quantifies the strict convexity of $F$, whereas the upper bound quantifies the strict
convexity of $F^\ast$ as we specify below.

\begin{lemma}\label{thebounds}
Let $F\in \CC^{2}(\M)$ be an integrand satisfying the Legendre $(p,q)$-growth condition \eqref{assf}$_3$ for exponents $2 \leq p \leq q < \infty$.
Then $F^{\ast}\in \CC^{1}(\M)\cap \CC^{2}(\M\setminus \{F^{\prime}(0)\})$ and
\begin{equation}\label{dualbound}
\frac{1}{2L}\ell_{1}(F^{\prime}(z))^{q^{\prime}-2}| \xi |^{2} \leq \langle ( F^{\ast} )^{\prime \prime}( F^{\prime}(z) )\xi , \xi  \rangle\leq
L\ell_{\mu}(z)^{2-p}| \xi |^{2}
\end{equation}
for all $z\in\M\setminus\{0\}$, $\xi \in \M$. If $\mu>0$ in \eqref{assf}$_3$, then $F^\ast$ is $\CC^2$-regular and \eqref{dualbound} holds for all $z\in\M$, $\xi\in \M$.
\end{lemma}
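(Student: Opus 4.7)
The plan is to invoke Lemma \ref{strictcvx} and Lemma \ref{strongcvx} to transfer the qualitative properties from $F$ to $F^{\ast}$, and then to convert the pointwise bounds on $F''$ in \eqref{assf}$_3$ into the asserted bounds on $(F^{\ast})''$ by matrix inversion.

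First I would verify the hypotheses of Lemmas \ref{strictcvx}--\ref{strongcvx}. The integrand $F$ is already $\CC^{1}$-regular and real-valued by assumption. Super-linearity follows by integrating the lower ellipticity bound in \eqref{assf}$_3$ twice along a ray from the origin: since $p\ge 2$, one has $\ell_{\mu}(sz)^{p-2}\ge |s|^{p-2}|z|^{p-2}$, so a direct computation gives $F(z)-F(0)-\langle F'(0),z\rangle\ge (p(p-1)L)^{-1}|z|^{p}$, which in particular forces $F(z)/|z|\to\infty$. Strict convexity is obtained from the strict monotonicity of $F'$: for $z_{1}\ne z_{2}$,
\[
\langle F'(z_{1})-F'(z_{2}),z_{1}-z_{2}\rangle\ge L^{-1}|z_{1}-z_{2}|^{2}\int_{0}^{1}\ell_{\mu}(z_{2}+\lambda(z_{1}-z_{2}))^{p-2}\,d\lambda>0,
\]
because $\ell_{\mu}^{p-2}$ vanishes at no more than one point of the segment. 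Lemma \ref{strictcvx} then gives that $F^{\ast}$ is real-valued, super-linear, strictly convex and $\CC^{1}$-regular on all of $\M$.

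Next I would localize the $\CC^{2}$-regularity. From the lower ellipticity bound, $F''(z)$ is positive definite (in particular $\det F''(z)\ne 0$) whenever $\ell_{\mu}(z)>0$; that is, at every $z\in\M$ if $\mu>0$, and at every $z\in\M\setminus\{0\}$ if $\mu=0$. Applying Lemma \ref{strongcvx} on balls covering this open set and using that $F'$ is a bijective homeomorphism of $\M$ (hence maps $\M\setminus\{0\}$ onto $\M\setminus\{F'(0)\}$), one obtains $F^{\ast}\in\CC^{2}(\M\setminus\{F'(0)\})$, resp. $F^{\ast}\in\CC^{2}(\M)$ when $\mu>0$, together with the identity $(F^{\ast})''(F'(z))=F''(z)^{-1}$ at every such point.

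It remains to read off the two inequalities in \eqref{dualbound} from \eqref{assf}$_3$ by taking eigenvalue reciprocals. The lower ellipticity bound $F''(z)\ge L^{-1}\ell_{\mu}(z)^{p-2}\,\Id$ gives at once that $F''(z)^{-1}\le L\,\ell_{\mu}(z)^{2-p}\,\Id$ in the sense of quadratic forms, which is exactly the upper estimate in \eqref{dualbound}. For the lower estimate, I would combine the upper bound in \eqref{assf}$_{3}$ with the elementary scalar inequality
\[
1+t^{\alpha}\le 2(1+t^{2})^{\alpha/2}\qquad\text{for all } t\ge 0,\ \alpha\in[0,1],
\]
(which is verified by distinguishing the cases $t\le 1$ and $t\ge 1$) applied with $\alpha=(q-2)/(q-1)=2-q'$ and $t=|F'(z)|$. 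This yields $|F''(z)|\le 2L\,\ell_{1}(F'(z))^{2-q'}$, so the smallest eigenvalue of $F''(z)^{-1}$ is at least $(2L)^{-1}\ell_{1}(F'(z))^{q'-2}$, producing the claimed lower bound.

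No serious obstacle is expected: all structural work is contained in Lemmas \ref{strictcvx}--\ref{strongcvx}, and the quantitative step is pure linear algebra once the scalar inequality above is observed. The only mildly delicate point is the bookkeeping in the degenerate case, namely the passage from non-invertibility of $F''$ at $z=0$ to the excluded point $F'(0)$ on the dual side, which is handled by the bijectivity of $F'$.
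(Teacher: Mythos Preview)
Your argument is correct and follows essentially the same route as the paper: establish strict convexity and super-linearity from the lower bound in \eqref{assf}$_3$, invoke Lemma~\ref{strongcvx} to obtain $(F^\ast)''(F'(z))=F''(z)^{-1}$ away from the degenerate point, and then read off \eqref{dualbound} by inverting the eigenvalue interval. The only difference is that you spell out the scalar inequality $1+t^{\alpha}\le 2(1+t^{2})^{\alpha/2}$ needed to pass from $L(1+|F'(z)|^{(q-2)/(q-1)})$ to $2L\,\ell_{1}(F'(z))^{2-q'}$, which the paper leaves implicit in its eigenvalue-reciprocal step.
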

\begin{proof}
The lower bound in \eqref{assf}$_3$ implies in a routine manner that $F$ is strictly convex and super-linear, and it also guarantees that the Hessian matrix
$F^{\prime \prime}(z)$ is invertible for all $z \in \M\setminus\{0\}$. We can now infer from Lemma \ref{strongcvx} that $F^\ast$ is $\CC^2$-regular away from $F'(0)$
and that $( F^{\ast})^{\prime \prime} (F^{\prime}(z )) = F^{\prime \prime}(z)^{-1}$ for all $z \in \M\setminus\{0\}$. In order to derive the double bound \eqref{dualbound}
we note that \eqref{assf}$_3$ yields that all the eigenvalues of $F^{\prime \prime}(z)$ belong to the interval
$$
\left[ L^{-1} \ell_{\mu}(z)^{p-2},L\left( 1+|F^{\prime}(z)|^{\frac{q-2}{q-1}} \right) \right],
$$
and given that the eigenvalues of $( F^{\ast} )^{\prime \prime}(F^{\prime}(z)) = F^{\prime \prime}(z)^{-1}$ are their reciprocals, they must all belong to the interval
$$
\left[ \frac{1}{2L} \ell_{1}(F^{\prime}(z))^{\frac{2-q}{q-1}}, L\ell_{\mu}(z)^{2-p} \right] .
$$
Since $(2-q)/(q-1)=q^{\prime}-2$ this concludes the proof of \eqref{dualbound}. The final claim follows easily from the same considerations, and Lemma \ref{strongcvx}.
\end{proof}
We conclude this part with a monotonicity property that follows from the Legendre $(p,q)$-growth condition \eqref{assf}$_3$.
While this type of result is probably not surprising to the experts, we are not aware of any record of it in the literature. It is instrumental for the approach
of this paper and we remark that it allows us to use the full power of convex duality without having to go through the dual variational problem \cite{JKO}.
\begin{corollary}\label{keylemma}
Let $F\in \CC^{2}(\mathbb{R}^{N\times n})$ be an integrand that satisfies the Legendre $(p,q)$-growth condition \eqref{assf}$_{3}$. Then the monotonicity inequalities
\begin{equation}\label{keymono}
\left\{
\begin{array}{c}
\displaystyle
\ \langle F^{\prime}(z_{1})-F^{\prime}(z_{2}),z_{1}-z_{2} \rangle \geq c\snr{V_{\mu ,p}(z_{1})-V_{\mu ,p}(z_{2})}^{2}
  +c\snr{ V_{1,q^\prime}(F^{\prime}(z_{1}))-V_{1,q^{\prime}}(F^{\prime}(z_{2}))}^{2}\\[8pt]\displaystyle
  \ F(z)-F(0)-\langle F'(0),z\rangle \ge c\snr{V_{\mu ,p}(z)}^{2}
  +c\snr{ V_{1,q^\prime}(F^{\prime}(z)-F^{\prime}(0))}^{2}
  \end{array}
  \right.
\end{equation}
hold for all $z,z_{1},z_{2}\in \mathbb{R}^{N\times n}$, with $c\equiv c(L,p,q)$. Moreover, given any $\mathbb{R}^{N\times n}$-valued, differentiable function $w$
defined on an open set $\Omega\subset \mathbb{R}^{n}$, it holds
\eqn{keymono.1}
$$
\sum_{s=1}^n \langle F^{\prime\prime}(w)\p_s w,\p_s w \rangle \geq c\snr{\nabla V_{\mu,p}(w)}^{2}+c\snr{\nabla V_{1,q'}(F'(w))}^{2},
$$
for all $s\in\{1,\cdots,n\}$, and some $c\equiv c(L,p,q)$.
\end{corollary}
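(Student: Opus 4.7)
The plan is to exploit the Fenchel duality identities from Lemmas \ref{strongcvx} and \ref{thebounds}, together with the elementary equivalences in \eqref{Vm}. The unifying theme is that each quantity on the left-hand side admits two integral representations — one involving $F''$ and one involving $(F^{\ast})''$ — and the Hessian lower bounds from $\eqref{assf}_{3}$ and \eqref{dualbound} applied to these representations yield the two $V$-type lower bounds.

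For the first inequality of \eqref{keymono}, I would first write
\begin{equation*}
\langle F'(z_{1})-F'(z_{2}),z_{1}-z_{2}\rangle = \int_{0}^{1}\langle F''(z_{2}+t(z_{1}-z_{2}))(z_{1}-z_{2}),z_{1}-z_{2}\rangle\dt,
\end{equation*}
insert the lower bound from $\eqref{assf}_{3}$, and invoke the second equivalence in \eqref{Vm} to recover $|V_{\mu,p}(z_{1})-V_{\mu,p}(z_{2})|^{2}$. Setting $\xi_{i}:=F'(z_{i})$ and using \eqref{derivatives} together with Lemma \ref{strongcvx} to identify $(F^{\ast})''(F'(w))=F''(w)^{-1}$, the same expression also equals
\begin{equation*}
\langle\xi_{1}-\xi_{2},(F^{\ast})'(\xi_{1})-(F^{\ast})'(\xi_{2})\rangle=\int_{0}^{1}\langle (F^{\ast})''(\xi_{2}+t(\xi_{1}-\xi_{2}))(\xi_{1}-\xi_{2}),\xi_{1}-\xi_{2}\rangle\dt.
\end{equation*}
The lower bound \eqref{dualbound}, valid at every point of the segment because $F'$ is a homeomorphism of $\M$, together with the second equivalence of \eqref{Vm}, then yields the $|V_{1,q'}(F'(z_{1}))-V_{1,q'}(F'(z_{2}))|^{2}$ contribution. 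Averaging the two estimates completes the first inequality.

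For the second inequality of \eqref{keymono}, Taylor's formula with integral remainder gives
\begin{equation*}
F(z)-F(0)-\langle F'(0),z\rangle=\int_{0}^{1}(1-t)\langle F''(tz)z,z\rangle\dt,
\end{equation*}
so the $F''$ lower bound together with a short case analysis in the regimes $|z|\lesssim\mu$ versus $|z|\gtrsim\mu$ recovers the $|V_{\mu,p}(z)|^{2}$ piece. For the dual term I would use the Fenchel-Young identity at $w=0$ and $w=z$ to rewrite the left-hand side as the Taylor remainder of $F^{\ast}$ at $F'(z)$ evaluated at $F'(0)$, namely
\begin{equation*}
F(z)-F(0)-\langle F'(0),z\rangle = F^{\ast}(F'(0))-F^{\ast}(F'(z))-\langle(F^{\ast})'(F'(z)),F'(0)-F'(z)\rangle,
\end{equation*}
and then expand this remainder as an integral of $(F^{\ast})''$ along the segment from $F'(0)$ to $F'(z)$. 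Applying the lower bound \eqref{dualbound} to the integrand with direction vector $F'(z)-F'(0)$, together with \eqref{Vm}, produces the desired $V_{1,q'}$-type contribution.

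Finally, for the pointwise inequality \eqref{keymono.1}, the $F''$ lower bound yields $\sum_{s}\langle F''(w)\p_{s}w,\p_{s}w\rangle\geq L^{-1}\ell_{\mu}(w)^{p-2}|\nabla w|^{2}\approx |\nabla V_{\mu,p}(w)|^{2}$ by a direct chain-rule calculation using \eqref{Vm}. For the $V_{1,q'}$ contribution, the key observation is the identity $\p_{s}F'(w)=F''(w)\p_{s}w$, so that inverting $F''(w)$ via Lemma \ref{strongcvx} gives $\langle F''(w)\p_{s}w,\p_{s}w\rangle=\langle(F^{\ast})''(F'(w))\p_{s}F'(w),\p_{s}F'(w)\rangle$. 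Applying \eqref{dualbound}, summing over $s$ and using \eqref{Vm} once more delivers $|\nabla V_{1,q'}(F'(w))|^{2}$; averaging closes the inequality. The main technical point throughout is the uniform validity of \eqref{dualbound} on the line segments that appear: in the degenerate case $\mu=0$, $(F^{\ast})''$ is only guaranteed to exist away from $F'(0)$ by Lemma \ref{thebounds}, but on each one-dimensional segment this exceptional point is negligible and the integral representations remain well defined.
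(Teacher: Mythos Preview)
Your approach to $\eqref{keymono}_{1}$ and \eqref{keymono.1} is correct and matches the paper's: both exploit the two integral representations (via $F''$ and via $(F^{\ast})''$) and the bounds $\eqref{assf}_{3}$, \eqref{dualbound}. The paper handles the degenerate case $\mu=0$ by first proving the inequality when $F'(0)$ avoids the $\xi$-segment and then passing to the limit by continuity, which is cleaner than your ``negligible exceptional point'' remark, but the substance is the same. For \eqref{keymono.1} the paper simply appeals to $\eqref{keymono}_{1}$ via difference quotients, whereas your pointwise identity $\langle F''(w)\partial_{s}w,\partial_{s}w\rangle=\langle (F^{\ast})''(F'(w))\partial_{s}F'(w),\partial_{s}F'(w)\rangle$ is an equally valid shortcut.

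There is, however, a genuine gap in your treatment of $\eqref{keymono}_{2}$. Your Fenchel--Young identity is correct, and expanding it yields
\[
F(z)-F(0)-\langle F'(0),z\rangle \;\ge\; c\int_{0}^{1}(1-t)\,\ell_{1}\bigl(F'(z)+t(F'(0)-F'(z))\bigr)^{q'-2}\dt\;\snr{F'(z)-F'(0)}^{2}.
\]
But invoking \eqref{Vm} here gives a lower bound of order $(1+\snr{F'(z)}^{2}+\snr{F'(0)}^{2})^{(q'-2)/2}\snr{F'(z)-F'(0)}^{2}$, which is \emph{not} comparable to the required $\snr{V_{1,q'}(F'(z)-F'(0))}^{2}=(1+\snr{F'(z)-F'(0)}^{2})^{(q'-2)/2}\snr{F'(z)-F'(0)}^{2}$ unless $\snr{F'(0)}$ is bounded in terms of $(L,p,q)$. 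The hypothesis $\eqref{assf}_{3}$ alone does not provide such a bound (take $F'(0)$ arbitrarily large; since $q'-2<0$ your estimate degenerates). The paper sidesteps this by first subtracting the null Lagrangian $F(0)+\langle F'(0),\cdot\rangle$, thereby reducing to $F(0)=0$, $F'(0)=0$; then the target becomes $\snr{V_{1,q'}(F'(z))}^{2}$ and your computation goes through directly. You should insert this normalisation step before running your Taylor expansion of $F^{\ast}$.
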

\begin{proof}
If $\mu\in (0,1]$ in \eqref{assf}$_{3}$, inequality \eqref{keymono}$_{1}$ comes as a direct consequence of Lemma \ref{thebounds} and \eqref{Vm}$_{1,2}$,
with the stated dependency of the constant $c$. In fact, letting $\xi_{1}:=F^{\prime}(z_{1})$, $\xi_{2}:=F^{\prime}(z_{2})$, we rewrite
\begin{eqnarray*}
\langle F^{\prime}(z_{1})-F^{\prime}(z_{2}),z_{1}-z_{2}\rangle&\stackrel{\eqref{derivatives}}{=}&\langle \xi_{1}-\xi_{2},(F^{\ast})^{\prime}(\xi_{1})-(F^{\ast})^{\prime}(\xi_{2})\rangle\nonumber \\
&=&\left\langle\left(\int_{0}^{1}(F^{\ast})^{\prime\prime}(\xi_{2}+\lambda(\xi_{1}-\xi_{2}))\d\lambda\right)(\xi_{1}-\xi_{2}),\xi_{1}-\xi_{2}\right\rangle\nonumber \\
&\stackrel{\eqref{dualbound}}{\ge}&\frac{1}{2L}\left(\int_{0}^{1}\ell_{1}(\xi_{2}+\lambda(\xi_{1}-\xi_{2}))^{q^{\prime}-2}\d\lambda\right)\snr{\xi_{1}-\xi_{2}}^{2}\nonumber \\
&\stackrel{\eqref{Vm}_{1,2}}{\ge}&c\snr{V_{1,q^{\prime}}(\xi_{1})-V_{1,q^{\prime}}(\xi_{2})}^{2},
\end{eqnarray*}
for $c\equiv c(L,p,q)$ that, together with the lower bound in $\eqref{assf}_{3}$ yields $\eqref{keymono}_{1}$. On the other hand, if $\mu=0$ we apply Lemma \ref{thebounds}
and \eqref{Vm}$_{1,2}$ to derive \eqref{keymono}$_{1}$ for all $z_{1},z_{2}\in \mathbb{R}^{N\times n}$ such that zero does not belong to the closed segment with endpoints $z_{1},z_{2}$.
Because the terms on the two sides of \eqref{keymono}$_{1}$ are continuous in $z_{1},z_{2}$ we obtain the general case by approximation. The bound in \eqref{keymono}$_{2}$ comes
as a straightforward consequence of \eqref{fy} and Lemma \ref{thebounds}. Indeed, observe that up to replace\footnote{Here we are subtracting a null Lagrangian from the convex
integrand $F$, so this does not affect variational problem \eqref{fff}.} $F(z)$ by $F(z)-F(0)-\langle F^{\prime}(0),z\rangle$ for all $z\in \mathbb{R}^{N\times n}$,
we can assume that $F(0)=0$, and $F^{\prime}(0)=0$, observe that this 
and the definition of $F^{\ast}$ imply that $F^{\ast}(0)=0$, set $\xi:=F^{\prime}(z)$  and bound
\begin{eqnarray*}
  F(z)&\stackrel{\eqref{fy},\eqref{derivatives}}{\ge}&\langle\xi,(F^{\ast})^{\prime}(\xi)\rangle-F^{\ast}(\xi)\stackrel{F^{\ast}(0)=0}{=}
  \int_{0}^{1}\langle (F^{\ast})^{\prime}(\xi)-(F^{\ast})^{\prime}(\lambda\xi),\xi\rangle\d\lambda\nonumber \\
&=&\int_{0}^{1}\int_{0}^{1}(1-\lambda)\langle(F^{\ast})^{\prime\prime}(\lambda\xi+t(1-\lambda)\xi)\xi,\xi\rangle\dtt\d\lambda\nonumber \\
&\stackrel{\eqref{dualbound}}{\ge}&\frac{1}{2L}\left(\int_{0}^{1}\int_{0}^{1}(1-\lambda)\ell_{1}(\lambda\xi+t(1-\lambda)\xi)^{q^{\prime}-2}\dtt\d\lambda\right)\snr{\xi}^{2}\nonumber \\
&\stackrel{q^{\prime}<2}{\ge}&c\ell_{1}(\xi)^{q^{\prime}-2}\snr{\xi}^{2}\stackrel{\eqref{vpvqn}}{\ge}c\snr{V_{1,q^{\prime}}(\xi)}^{2},
\end{eqnarray*}
with $c\equiv c(L,q)$, and $\eqref{keymono}_{2}$ follows including also the $p$-ellipticity information from $\eqref{assf}_{3}$.
Finally, \eqref{keymono.1} is a direct consequence of $\eqref{keymono}_{1}$ and standard difference quotients arguments.
\end{proof}

As a direct consequence of Corollary \ref{keylemma}, we deduce $\LL^{q'}$-integrability of the stress tensor for any function having $\F$ locally finite.
\begin{corollary}\label{ccc}
Let $B\Subset \Omega$ be any ball with radius smaller than one, and $v\in \WW^{1,p}(B,\mathbb{R}^{N})$ be a function such that $\F(v;B)<\infty$.
Then $F'(\nabla v)\in \LL^{q'}(B;\mathbb{R}^{N})$ with
\eqn{4.2.1}
$$
\nr{F'(\nabla v)}_{\LL^{q'}(B)}^{q'}\le c\F(v;B)+c,
$$
with $c\equiv c(n,L,p,q)$.
\end{corollary}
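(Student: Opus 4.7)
The plan is to derive the bound \eqref{4.2.1} by integrating the pointwise Fenchel-Young lower bound $\eqref{keymono}_{2}$ of Corollary \ref{keylemma} applied at $z=\nabla v(x)$ and then extracting a bound on $|F'(\nabla v)|^{q'}$ from the resulting control of $|V_{1,q'}(F'(\nabla v)-F'(0))|^{2}$.

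First I would record the elementary consequence of the definition of $V_{1,q'}$ and the inequality $q'\le 2$, namely that
\[
|V_{1,q'}(w)|^{2}=\ell_{1}(w)^{q'-2}|w|^{2}\ge c|w|^{q'}-c\qquad\text{for all }w\in\mathbb{R}^{N\times n},
\]
with $c\equiv c(q)$, which is immediate by splitting into the regions $|w|\le 1$ and $|w|>1$. Applying $\eqref{keymono}_{2}$ pointwise at $z=\nabla v(x)$ and integrating over $B$ I would then obtain
\[
c\int_{B}|F'(\nabla v)-F'(0)|^{q'}\dx\le \F(v;B)+|F(0)|\,|B|+|F'(0)|\int_{B}|\nabla v|\dx+c|B|.
\]

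Next I would absorb the $\int_{B}|\nabla v|\dx$ term through Young's inequality together with the coercivity from $\eqref{assf}_{2}$: since $p\ge 2$, one has $|\nabla v|\le |\nabla v|^{p}+1\le L\ell_{\mu}(\nabla v)^{p}+1\le L^{2}F(\nabla v)+1$, so $\int_{B}|\nabla v|\dx\lesssim \F(v;B)+|B|$. The constants $|F(0)|$ and $|F'(0)|$ are bounded in terms of $(L,p,q)$: $|F(0)|\le L\mu^{p}+L\mu^{q}\le 2L$ since $\mu\in[0,1]$, while $|F'(0)|$ is controlled by $(L,p,q)$ using the structural bounds on $F'$ (as recalled in \eqref{f'}, $|F'(z)|\lesssim \ell_{\mu}(z)^{p-1}+\ell_{\mu}(z)^{q-1}$, hence $|F'(0)|\lesssim 1$). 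Since $B$ has radius less than one, $|B|\le \omega_{n}$ is absorbed into the constants depending on $(n,L,p,q)$.

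Finally I would pass from $\|F'(\nabla v)-F'(0)\|_{L^{q'}(B)}^{q'}$ to $\|F'(\nabla v)\|_{L^{q'}(B)}^{q'}$ via the triangle inequality
\[
\int_{B}|F'(\nabla v)|^{q'}\dx\le 2^{q'-1}\int_{B}|F'(\nabla v)-F'(0)|^{q'}\dx+2^{q'-1}|F'(0)|^{q'}|B|,
\]
which closes the estimate. The only subtlety I foresee is the bookkeeping of the constants $|F(0)|$, $|F'(0)|$ (and the fact that the claimed dependence is on $(n,L,p,q)$ alone, independently of $\mu$); this is handled cleanly by the upper growth $\eqref{assf}_{2}$ and the derivative bound \eqref{f'} evaluated at the origin, both of which give $\mu$-independent constants as $\mu\le 1$. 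No iteration or covering argument is needed — the whole estimate is a one-shot consequence of the Legendre-type strong duality encoded in $\eqref{keymono}_{2}$.
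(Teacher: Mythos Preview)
Your proposal is correct and follows essentially the same route as the paper: both arguments pivot on the Fenchel--Young type lower bound $\eqref{keymono}_{2}$, extract $|F'(\nabla v)-F'(0)|^{q'}$ from the $V_{1,q'}$-term, and then absorb the affine correction $F(0)+\langle F'(0),\nabla v\rangle$ via $\eqref{assf}_{2}$ and \eqref{f'}. The only cosmetic difference is that the paper carries out the full chain of pointwise inequalities before integrating, whereas you integrate first and tidy up afterwards.
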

\begin{proof}
A direct computation gives
\begin{eqnarray*}
\snr{F'(\nabla v)}^{q'}&\stackrel{\eqref{f'}}{\le}&c+c\snr{F^{\prime}(\nabla v)-F^{\prime}(0)}^{q^{\prime}}\stackrel{\eqref{vpvqn}}{\le}c+\snr{V_{1,q'}(F^{\prime}(\nabla v)-F^{\prime}(0))}^{2}\nonumber \\
&\stackrel{\eqref{keymono}_{2}}{\le}&c+c\left(F(\nabla v)-F(0)-\langle F^{\prime}(0),\nabla v\rangle\right)\nonumber \\
&\stackrel{\eqref{f'}}{\le}& cF(\nabla v)+c\snr{\nabla v}^{p}+c\stackrel{\eqref{assf}_{2}}{\le}cF(\nabla v)+c,
\end{eqnarray*}
where we also used Young inequality with conjugate exponents $(p,p')$, and it is $c\equiv c(L,p,q)$. Integrating the content of the previous display on $B$ we obtain \eqref{4.2.1},
and the proof is complete.
\end{proof}

\subsection{Convex polynomials}\label{polyn} In this section we prove that for a reasonably large class of integrands, the usual natural growth
conditions and the quantified Legendre one actually coincide. In this respect, let us first point out that a straightforward consequence of \eqref{f'}, and \eqref{assf}$_{3}$ yield
\begin{equation}\label{controlled}
\snr{F^{\prime \prime}(z)}  \leq c\ell_{1}(z)^{q-2},
\end{equation}
for all $z\in \M$, and some positive $c\equiv c(L,q)$, that is the $(q-2)$-growth of $F''$ available in the controlled $(p,q)$-growth case.
It is easy to see that the converse is false, that is, the condition on the right-hand side of \eqref{assf}$_{3}$ is strictly stronger than \eqref{controlled} when $q>2$.
However, if we restrict our attention to a special family of polynomials we can show that the two growth conditions actually coincide. To this aim, let us recall that,
given a number $\tx{d}\in \mathbb{N}$, for our purposes a polynomial of degree $\tx{d}$ is a smooth real-valued function $P\colon \mathbb{R}^{N\times n}\to \mathbb{R}$ that
coincides with its Taylor expansion of order $\tx{d}$. For convenience, we shall group by homogeneity the various terms appearing in the polynomial, thus ultimately expressing
it as the sum of homogeneous components:
\eqn{polyy}
$$
P(z):=\sum_{s=0}^{\tx{d}}P_{s}(z)=\sum_{s=0}^{\tx{d}}\frac 1 {s \tn{!}} P^{(s)}(0)[\odot^{s} z],
$$
for all $z\in \mathbb{R}^{N\times n}$, where we used the definition of polynomial to explicitly identify $P_{s}$, that from now on, will be referred to as homogeneous
components of the polynomial. We are mostly interested in convex, even polynomial, resulting as the sum of nonnegative homogeneous components. In particular, the evenness
condition implies that the polynomial has even degree $2\tx{d}$, and in \eqref{polyy} only $2s$-homogeneous terms appear. Before proving that convex, even polynomials
of degree $2\tx{d}$ verify the quantified Legendre growth in $\eqref{assf}_{3}$ with $q=2\tx{d}$, let us recall the Euler relation for homogeneous functions: whenever
$\tx{g}\in \CC^{1}(\mathbb{R}^{N\times n})$ is positively $\kappa$-homogeneous for some $\kappa > 1$, then
\eqn{euler}
$$
\kappa\tx{g}(z)=\langle \tx{g}^{\prime}(z),z\rangle\qquad \mbox{for all} \ \ z\in \mathbb{R}^{N\times n}.
$$
Next, a technical lemma.
\begin{lemma}\label{lemma}
  Let $Q,H\in \CC^{1}(\mathbb{R}^{N\times n})$ be functions such that $H$ is convex, even, and $s$-homogeneous for some $s\ge 2$, and $\langle Q^{\prime}(z), z\rangle\ge 0$ for
  all $z\in \mathbb{R}^{N\times n}$. Then there exists $\delta\equiv \delta(H,s)\in [0,1)$, depending only on the structure of $H$ such that
\eqn{l.1}
$$
\langle Q^{\prime}(z), H^{\prime}(z)\rangle\ge-\delta\snr{Q^{\prime}(z)}\snr{H^{\prime}(z)}\qquad \mbox{for all} \ \ z\in \mathbb{R}^{N\times n}. 
$$
\end{lemma}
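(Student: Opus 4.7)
The plan is to reduce \eqref{l.1} to a statement about the angle that $H'(z)$ makes with $z$, exploiting Euler's identity \eqref{euler} for the $s$-homogeneous convex function $H$, together with a radial/tangential decomposition of both $Q'(z)$ and $H'(z)$ with respect to $z$.

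First we collect immediate consequences of the hypotheses on $H$. From $s$-homogeneity with $s\ge 2$ we have $H(0)=0$ and, by convexity combined with evenness, $H(z)=\tfrac{1}{2}(H(z)+H(-z))\ge H(0)=0$, so $H\ge 0$ everywhere and $z=0$ is a global minimum. Euler's identity \eqref{euler} then yields $\langle H'(z),z\rangle = sH(z)\ge 0$. Moreover, for a convex nonnegative function with minimum zero the critical set coincides with the zero set, hence $\{H'=0\}=\{H=0\}$; at such $z$ the right-hand side of \eqref{l.1} vanishes and the inequality is trivial. We therefore restrict to $z$ with $H'(z)\neq 0$, equivalently $H(z)>0$.

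For each such $z$, we would introduce the orthogonal decompositions
$$
Q'(z)=\beta\,\tfrac{z}{\snr{z}}+v,\qquad H'(z)=\alpha\,\tfrac{z}{\snr{z}}+w,\qquad v,w\perp z,
$$
with $\alpha := sH(z)/\snr{z}>0$ by Euler and $\beta := \langle Q'(z),z\rangle/\snr{z}\ge 0$ by the hypothesis on $Q$. A direct computation then gives
$$
\langle Q'(z),H'(z)\rangle = \alpha\beta+\langle v,w\rangle \ge -\snr{v}\snr{w},
$$
and dividing by $\snr{Q'(z)}\snr{H'(z)}$, using $\snr{Q'(z)}^{2}=\beta^{2}+\snr{v}^{2}$ and $\snr{H'(z)}^{2}=\alpha^{2}+\snr{w}^{2}$, we obtain
$$
\frac{\langle Q'(z),H'(z)\rangle}{\snr{Q'(z)}\snr{H'(z)}} \ge -\frac{\snr{v}}{\snr{Q'(z)}}\cdot\frac{\snr{w}}{\snr{H'(z)}} \ge -\sqrt{1-\frac{\alpha^{2}}{\snr{H'(z)}^{2}}}.
$$

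The claim thus reduces to establishing a uniform lower bound
$$
\Psi_H(z):=\frac{sH(z)}{\snr{z}\snr{H'(z)}}\ge c_{0}>0
$$
for $z\neq 0$ with $H'(z)\neq 0$, the constant $c_{0}$ depending only on $H$ and $s$; one may then take $\delta:=\sqrt{1-c_{0}^{2}}\in[0,1)$, which closes the argument. Since $\Psi_H$ is $0$-homogeneous, continuous and strictly positive on the open subset of the unit sphere where $H'\neq 0$, a compactness/continuity argument together with the polynomial structure of $H$ produces the required $c_{0}$. \emph{The main obstacle} is precisely this final quantitative step: ensuring that $\Psi_H$ does not collapse as $z$ approaches the critical locus of $H'$, which is where the structural assumptions on $H$ (convex, even, $s$-homogeneous with nonnegative homogeneous components) enter in an essential way, via a polynomial estimate controlling the rate at which the numerator and the denominator vanish along the degenerate directions.
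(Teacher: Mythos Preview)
Your radial/tangential decomposition and the reduction to a uniform lower bound $\Psi_H(z)=sH(z)/(\snr{z}\,\snr{H'(z)})\ge c_0>0$ coincide exactly with the paper's route. The genuine gap is the final step: the bound $\Psi_H\ge c_0$ on the full set $\{H'\neq 0\}$ is \emph{false}. For $H(z_1,z_2)=z_1^{2}$ on $\mathbb{R}^{2}$ (convex, even, $2$-homogeneous) and $z=(\varepsilon,1)$ one computes $\Psi_H(z)=\varepsilon/\sqrt{1+\varepsilon^{2}}\to 0$ as $\varepsilon\to 0$, so no compactness argument can produce a positive $c_0$. Your appeal to the ``polynomial structure of $H$'' and to ``nonnegative homogeneous components'' is also off target: $H$ is only assumed $\CC^{1}$, convex, even and $s$-homogeneous --- it is a single homogeneous function, not a polynomial, and there is no extra algebraic structure to exploit along the degenerate directions.

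The paper does not try to bound $\Psi_H$ globally. It first writes $H^{1/s}$ as the support function of a compact symmetric convex set $K$, sets $\mathbb{V}:=\mathrm{span}(K)$, and observes that $H(z)=H(z_1)$ under the splitting $z=z_1+z_2\in\mathbb{V}\oplus\mathbb{V}^{\perp}$; in particular $H'\equiv 0$ on $\mathbb{V}^{\perp}$, and on $\mathbb{V}\setminus\{0\}$ one has $H>0$ because $0$ lies in the relative interior of $K$. The paper then carries out your decomposition only for $z\in\mathbb{V}\setminus\{0\}$: on the compact set $\partial B_{\sigma_H}(0)\cap\mathbb{V}$ both $H$ and $\snr{H'}$ are continuous and strictly positive, so by $0$-homogeneity $\inf_{\mathbb{V}\setminus\{0\}}\Psi_H>0$, which gives $\delta<1$. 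The missing idea in your sketch is precisely this identification of the degenerate set of $H$ with the linear subspace $\mathbb{V}^{\perp}$, which turns the ``open subset of the sphere'' into a genuine compact sphere in $\mathbb{V}$ on which the infimum is attained and positive. (Note that the paper's own argument is explicitly written for $z\in\mathbb{V}^{\perp}$ and $z\in\mathbb{V}\setminus\{0\}$; handling a general $z=z_1+z_2$ with both components nonzero requires additional care, since, as the same example shows, $\snr{E_H(z)}/\snr{H'(z)}$ can approach $1$ when $\snr{z_2}\gg\snr{z_1}$.)
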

\begin{proof}
Without loss of generality we can assume that $H$ is not identically $0$. Convexity and $s$-homogeneity assure that $H(z)\ge 0$ for all $z\in \mathbb{R}^{N\times n}$.
Moreover, as $H^{1/s}$ is convex, $1$-homogeneous and nonnegative cf. \cite[Corollary 15.3.2]{Rockafellar}, it is
the support function of some compact, convex, symmetric set $K\subset \mathbb{R}^{N\times n}$, i.e., $H(z)=\left(\sup_{\xi\in K}\xi\cdot z\right)^s$ for all
$z\in \mathbb{R}^{N\times n}$. We then set $\mathbb{V}:=\tn{span}(K)$, decompose $\mathbb{R}^{N\times n}=\mathbb{V}\oplus \mathbb{V}^{\perp}$, and accordingly split
$\mathbb{R}^{N\times n}\ni z=z_{1}+z_{2}\in \mathbb{V}\oplus \mathbb{V}^{\perp}$, so that $H(z)\equiv H(z_{1})$, $H^{\prime}(z)\equiv H^{\prime}(z_{1})$ for all $z\in \mathbb{R}^{N\times n}$.
Being $K$ symmetric, zero belongs to the relative interior of $K$, and as $H$ is not identically $0$ there exists a positive radius $\sigma_{H}\equiv \sigma_{H}(H)>0$ such
that $\bar{B}_{\sigma_{H}}(0)\cap \mathbb{V}\subset K$ and it is 
\begin{equation}\label{infs}
\inf_{\omega\in \partial B_{\sigma_{H}}(0)}H(\omega)>0 \ \stackrel{\eqref{euler}}{\Longrightarrow} \ \inf_{\omega\in \partial B_{\sigma_{H}}(0)}\snr{H^{\prime}(\omega)}>0.
\end{equation}
Back to \eqref{l.1}, if $z\in \mathbb{V}^{\perp}$ there is nothing to prove as $H^{\prime}(z)=0$, so for the rest of the proof
we will take $z\in \mathbb{V}\setminus \{0\}$. We then decompose $H^{\prime}$ and $Q^{\prime}$ in an orthonormal fashion along $z$ - for $G\in \{H,Q\}$ we have:
\begin{flalign*}
  G^{\prime}(z)=\frac{\langle G^{\prime}(z), z\rangle z}{\snr{z}^{2}}+\left(G^{\prime}(z)-\frac{\langle G^{\prime}(z), z\rangle z}{\snr{z}^{2}}\right)
  =:\frac{\langle G^{\prime}(z), z\rangle z}{\snr{z}^{2}}+E_{G}(z).
\end{flalign*}
A direct computation then shows that
\begin{eqnarray*}
\langle H^{\prime}(z), Q^{\prime}(z)\rangle &=&\frac{\langle H^{\prime}(z), z\rangle\langle Q^{\prime}(z), z\rangle}{\snr{z}^{2}}+\langle E_{H}(z), E_{Q}(z)\rangle \nonumber \\
&\stackrel{\eqref{euler}}{=}&\frac{s H(z)\langle Q^{\prime}(z), z\rangle}{\snr{z}^{2}}+\langle E_{H}(z), E_{Q}(z)\rangle\nonumber \\
&\stackrel{\langle Q^{\prime}(z), z\rangle \ge 0}{\ge}&\langle E_{H}(z), E_{Q}(z)\rangle \ge -\snr{E_{H}(z)}\snr{Q^{\prime}(z)}.
\end{eqnarray*}
Now, if $\snr{E_{H}(z)}=0$ there is nothing to prove, otherwise by homogeneity we bound
\begin{eqnarray*}
  0&<&\snr{E_{H}(z)}^{2}=\snr{H^{\prime}(z)}^{2}-\frac{\langle H^{\prime}(z), z\rangle^{2}}{\snr{z}^{2}}\stackrel{\eqref{euler}}{=}\snr{H^{\prime}(z)}^{2}-\frac{s^{2}H(z)^{2}}{\snr{z}^{2}}
\nonumber \\
&=&\snr{H^{\prime}(z)}^{2}\left(1-\frac{s^{2}H(\sigma_{H} z/\snr{z})^2}{\sigma_{H}^{2}\snr{H^{\prime}(\sigma_{H}z/\snr{z})}^{2}}\right)\stackrel{\eqref{infs}}{\le}\snr{H'(z)}^{2}
\left(1-\inf_{\omega\in \partial B_{\sigma_{H}}(0)\cap \mathbb{V}}\frac{s^{2}H(\omega)^{2}}{\sigma_{H}^{2}\snr{H^{\prime}(\omega)}^{2}}\right)=:\snr{H^{\prime}(z)}^{2}\delta^{2},
\end{eqnarray*}
where we used that by continuity ($H\in \CC^{1}(\mathbb{R}^{N\times n})$) and thanks to \eqref{infs}, the infimum of $H/\snr{H^{\prime}}$ taken over
$\partial B_{\sigma_{H}}(0)\cap \mathbb{V}$ is strictly positive, thus $\delta\in[0,1)$.
\end{proof}
Now we are ready to state the main result of this section.
\begin{proposition}\label{poly} Let $\tx{d}\in \N$, and $P$ be an even, convex polynomial of degree $2\tx{d}$ with nonnegative homogeneous components.
  Then for each $i\in \{0,\ldots,2\tx{d}-2\}$, there exists $c\equiv c(n,N,P,\tx{d})>0$ such that
\begin{equation}\label{ePolyn}
\snr{P^{(i+2)}(z)}\leq c \left(1+\snr{P^\prime(z)}^{\frac{2\tx{d}-2-i}{2\tx{d}-1}}\right)\qquad \mbox{for all} \ \ z\in \R^{N\times n}.
\end{equation}
\end{proposition}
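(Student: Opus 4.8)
The plan is to reduce \eqref{ePolyn} to a pointwise estimate on each homogeneous component of $P$. Write $P=\sum_{s=0}^{2\tx{d}}P_{s}$ as in \eqref{polyy}, where each $P_{s}\ge0$ is $s$-homogeneous and $P_{s}\equiv0$ for odd $s$ by evenness, so that $P^{(i+2)}=\sum_{s\ge i+2}P_{s}^{(i+2)}$; since each $P_{s}^{(i+2)}$ is $(s-2-i)$-homogeneous and bounded on the unit sphere by a constant depending on $P$, one has $\snr{P_{s}^{(i+2)}(z)}\le c\,\ell_{1}(z)^{s-2-i}$ for $s\ge i+2$ (the component vanishing otherwise), so the terms with $s\le i+2$ contribute only a constant and the whole estimate is trivial on $\{\snr{z}\le1\}$, where the right-hand side is $\ge1$. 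As for the top component, the rescaled polynomials $z\mapsto t^{-2\tx{d}}P(tz)$ are convex for every $t>0$, hence so is their pointwise limit $P_{2\tx{d}}$; thus $P_{2\tx{d}}$ is convex, even, nonnegative and $2\tx{d}$-homogeneous, and exactly as in the proof of Lemma~\ref{lemma} its root $P_{2\tx{d}}^{1/(2\tx{d})}$ is the support function of a symmetric convex body $K$. Setting $\V:=\tn{span}(K)$, the function $P_{2\tx{d}}$ depends only on $\pi_{\V}z$ and is strictly positive on $\V\setminus\{0\}$, so by Euler's relation \eqref{euler}, homogeneity and compactness, $\snr{P_{2\tx{d}}'(z)}\ge c\snr{\pi_{\V}z}^{2\tx{d}-1}$ and $\snr{P_{2\tx{d}}^{(i+2)}(z)}\le c\snr{\pi_{\V}z}^{2\tx{d}-2-i}$ for all $z$, with $c\equiv c(n,N,P,\tx{d})$.

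The Legendre-type structure enters through Lemma~\ref{lemma}, applied with $H=P_{2\tx{d}}$ and $Q=P-P_{2\tx{d}}$: since $\langle P_{s}'(z),z\rangle=sP_{s}(z)\ge0$ for each $s$ we have $\langle Q'(z),z\rangle\ge0$, and Lemma~\ref{lemma} gives $\langle Q'(z),P_{2\tx{d}}'(z)\rangle\ge-\delta\snr{Q'(z)}\snr{P_{2\tx{d}}'(z)}$ with $\delta\equiv\delta(P,\tx{d})\in[0,1)$, whence
\begin{equation*}
\snr{P'(z)}^{2}=\snr{P_{2\tx{d}}'(z)+Q'(z)}^{2}\ge(1-\delta)\bigl(\snr{P_{2\tx{d}}'(z)}^{2}+\snr{Q'(z)}^{2}\bigr),
\end{equation*}
so in particular $\snr{P'(z)}\ge c\snr{\pi_{\V}z}^{2\tx{d}-1}$ and $\snr{P'(z)}\ge c\snr{Q'(z)}$. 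With $\theta:=\tfrac{2\tx{d}-2-i}{2\tx{d}-1}\in[0,1)$ and the elementary facts $t^{2\tx{d}-2-i}=(t^{2\tx{d}-1})^{\theta}$, $(1+s)^{\theta}\le1+s^{\theta}$, the first inequality and the bound on $\snr{P_{2\tx{d}}^{(i+2)}}$ give $\snr{P_{2\tx{d}}^{(i+2)}(z)}\le c(1+\snr{P'(z)}^{\theta})$, so the top component of $P^{(i+2)}$ is already controlled and it remains to estimate $Q^{(i+2)}$, of degree $2\tx{e}\le2\tx{d}-2$.

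This I would do by induction on $\tx{d}$, the base case $\tx{d}=1$ being immediate since then $P^{(i+2)}=P^{(2)}$ is constant. If $\V=\M$, then $P_{2\tx{d}}>0$ off the origin, $\snr{P'(z)}\ge c\snr{z}^{2\tx{d}-1}$, and the crude bound $\snr{P^{(i+2)}(z)}\le c\,\ell_{1}(z)^{2\tx{d}-2-i}$ closes the estimate at once (and also yields the gradient lower bound quoted below). If $\V\subsetneq\M$, I would split $z=x+y$ along $\V\oplus\V^{\perp}$: since $P_{2\tx{d}}$ and its derivatives are independent of $y$, and $P|_{\V^{\perp}}$ is a convex, even polynomial with nonnegative homogeneous components of degree strictly below $2\tx{d}$, the induction hypothesis applies to $P|_{\V^{\perp}}$ and supplies, besides the bound \eqref{ePolyn} for $P|_{\V^{\perp}}$, a lower bound $\snr{(P|_{\V^{\perp}})'(y)}\ge c\sum_{a}\snr{\pi_{\V_{a}}y}^{d_{a}-1}$ along a chain of subspaces $\V_{a}\subset\V^{\perp}$ with degree scales $d_{a}<2\tx{d}$ (obtained by iterating the top-component analysis). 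Expanding each monomial of $Q^{(i+2)}(x+y)$ in the form $x^{\alpha}y^{\beta}$ with $\snr{\alpha}+\snr{\beta}\le2\tx{e}-i-2$, the factors $\snr{x}^{\snr{\alpha}}$ are absorbed through $\snr{\pi_{\V}z}^{2\tx{d}-2-i}\le c(1+\snr{P'(z)}^{\theta})$, while the factors $\snr{y}^{\snr{\beta}}$ are handled via $\snr{P'(z)}\ge c\snr{Q'(z)}$ and the inductive lower bound; Young's inequality together with the monotonicity of $\tx{e}\mapsto\tfrac{2\tx{e}-2-i}{2\tx{e}-1}$ (which keeps all exponents produced $\le\theta$) then reduces the bound for $Q^{(i+2)}$ to the one already proved.

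The step I expect to be the main obstacle is precisely this degenerate case: the induction hypothesis controls the gradient of the \emph{restricted} polynomial $P|_{\V^{\perp}}$ at the point $\pi_{\V^{\perp}}z=y$, whereas the quantity at our disposal is the \emph{full} gradient $P'(x+y)$, and one has to verify that turning on the $x$-variable does not destroy the inductive lower bound through cancellation of Hessian cross terms. This is where the representation of $Q$ as a sum of nonnegative homogeneous components (via \eqref{euler}) and the sign information of Lemma~\ref{lemma} have to be invoked once more, now in a form adapted to the two-block splitting $\M=\V\oplus\V^{\perp}$ — and where convexity of $P$ is essential, as it forces the degree of $P|_{\V^{\perp}}$ to be large enough relative to the mixed monomials present in $Q$. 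Tracking the constants through the finitely many steps of the induction yields the dependence $c\equiv c(n,N,P,\tx{d})$ asserted in \eqref{ePolyn}.
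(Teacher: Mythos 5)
Your treatment of the top homogeneous component is sound and essentially the paper's: convexity of $P_{2\tx{d}}$ as a limit of rescalings, the support-function representation, Euler's relation giving $\snr{P_{2\tx{d}}'(z_1)}\approx\snr{z_1}^{2\tx{d}-1}$ and $\snr{P_{2\tx{d}}^{(i+2)}(z_1)}\lesssim\snr{P_{2\tx{d}}'(z_1)}^{\frac{2\tx{d}-2-i}{2\tx{d}-1}}$, and Lemma \ref{lemma} to pass from $\snr{Q'}^2+\snr{H'}^2$ to $\snr{P'}^2$ all appear in the paper's proof (cf. \eqref{uplow}, \eqref{eqH}, \eqref{l.1}). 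The genuine gap is exactly where you flag it: the estimate of $Q=P-P_{2\tx{d}}$ at a general point $z=z_1+z_2\in\V\oplus\V^{\perp}$. Your plan rests on an ``inductive lower bound'' $\snr{(P|_{\V^{\perp}})'(y)}\ge c\sum_a\snr{\pi_{\V_a}y}^{d_a-1}$ along a chain of subspaces, which is \emph{not} the statement \eqref{ePolyn} you are proving by induction, is nowhere established, and would then still have to be transported from the restricted gradient at $z_2$ to the full gradient at $z_1+z_2$ — the very cancellation problem you admit you cannot yet control. As written, the absorption of the $y$-factors in the monomials of $Q^{(i+2)}(x+y)$ therefore has no proof, and the induction does not close.

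The paper avoids gradient lower bounds altogether and resolves the transfer in the opposite (and easier) direction. It applies the induction hypothesis only at points $z_2\in\V^{\perp}$ (giving \eqref{indQ}), Taylor-expands $Q^{(2+i)}(z_1+z_2)$ around $z_2$ with Young's inequality and the exponent monotonicity \eqref{exponents} (display \eqref{3.18}), and then — this is the key step you are missing — Taylor-expands $Q'(z_1+z_2)-Q'(z_2)$ to obtain the \emph{upper-bound} transfer $\snr{Q'(z_2)}\le c(1+\snr{Q'(z)})+c\snr{H'(z_1)}$ (display \eqref{3.17}), again only using \eqref{indQ}, \eqref{uplow} and Young's inequality. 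Since the induction hypothesis enters through $\snr{Q'(z_2)}$ raised to a power strictly below one, an upper bound by $\snr{Q'(z)}+\snr{H'(z_1)}$ suffices, and Lemma \ref{lemma} then converts $\snr{Q'(z)}^2+\snr{H'(z)}^2$ into $\snr{P'(z)}^2$. If you replace your chain-of-subspaces lower bound by this two-step Taylor/Young transfer, your argument becomes the paper's proof; without it, the degenerate case $\V\subsetneq\M$ remains unproved.
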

\begin{proof}
The proof relies on an induction argument on the degree of the polynomial.
\subsection*{Base step: \texorpdfstring{$\tx{d}=1$}{}}  Recalling that $P$ is even, by definition we have $P(z)=P(0)+ 2^{-1}\langle P^{\prime\prime}(0)z,z\rangle$,
with $P^{\prime\prime}(0)$ being positive semi-definite, and in particular not identically zero in the sense of matrices. Therefore we trivially have
$\snr{P^{\prime \prime}(z)}=\snr{P^{\prime\prime}(0)}\equiv \snr{P^{\prime\prime}(0)}(1+\snr{P^{\prime}(z)}^{0})$, that is \eqref{ePolyn} with $\tx{d}=1$, and $c_{0}:=\snr{P^{\prime\prime}(0)}>0$.
\subsection*{Inductive step} Assume now that \eqref{ePolyn} holds for all polynomials of degree $2j$ whenever $1\le j\le \tx{d}$, and let
$P\colon \mathbb{R}^{N\times n}\to \mathbb{R}$ be a convex, even polynomial of degree $2\tx{d}+2$, meaning in particular that $P^{(2\tx{d}+2)}(0)\not \equiv 0$,
and the $(2\tx{d}+2)$-homogeneous part of $P$ is given by
$$
\mathbb{R}^{N\times n}\ni z\mapsto H(z):=\frac{1}{(2\tx{d}+2)\tn{!}}P^{(2\tx{d}+2)}(0)[\odot^{2\tx{d}+2}z],
$$
so that $P=Q+H$, where $Q\colon \mathbb{R}^{N\times n}\to \mathbb{R}$ is an even polynomial of degree $2\ti{\tx{d}}\le 2\tx{d}$. By homogeneity we have
$$
\frac{P(tz)}{t^{2\tx{d}+2}}\to H(z)\quad \mbox{as} \ \ t\to \infty,
$$
pointwise with respect to $z\in \mathbb{R}^{N\times n}$, thus $H$ is convex. But then $z\mapsto H^{1/(2\tx{d}+2)}(z)$ is convex, $1$-homogeneous, and nonnegative, see \cite[Corollary 15.3.2]{Rockafellar}, so it must be the support function for a compact and convex subset $K\subset \mathbb{R}^{N\times n}$, i.e.: 
\eqn{supp}
$$
H(z)^{1/(2\tx{d}+2)}=\sup_{\xi\in K}\langle\xi, z\rangle\qquad \mbox{for all} \ \ z\in \mathbb{R}^{N\times n}.
$$ 
Since any nonempty, convex, closed set is uniquely determined by its support function, and $H\not \equiv 0$, then $K$ is symmetric, thus $0$ belongs to
the relative interior of $K$. Set $\mathbb{V}:=\textnormal{span}(K)$, and let $\sigma_{P}\equiv \sigma_{P}(P)>0$ be the largest number such that
$\bar{B}_{\sigma_{P}}(0)\cap\mathbb{V}\subset K$ - $\sigma_{P}$ is positive as $0$ is in the relative interior of $K$. After splitting
$z=z_{1}+ z_{2}\in  \mathbb{V}\otimes \mathbb{V}^{\perp}$, the formulation of $H$ as the support function of $K$ in \eqref{supp} yields that
$H^{(i+2)}(z)\equiv H^{(i+2)}(z_{1})$ for all $i\in \{0,\cdots,2\tx{d}\}$, so we bound
\begin{eqnarray*}
(2\tx{d}+2)\left(\frac{\snr{z_{1}}}{\sigma_{P}}\right)^{2\tx{d}+2}\inf_{\omega\in\partial B_{\sigma_{P}}(0)\cap\mathbb{V}}H(\omega)&\le& (2\tx{d}+2)H(z_{1})\nonumber \\
&\stackrel{\eqref{euler}}{=}& \langle H^\prime(z_{1}), z_{1}\rangle\le\snr{H^\prime(z_{1})}\snr{z_{1}}\nonumber \\
&\le& \frac{\snr{z_{1}}^{2\tx{d}+2}}{\sigma_{P}^{2\tx{d}+1}}\sup_{\omega\in\partial B_{\sigma_{P}}(0)\cap\mathbb{V}}\snr{H'(\omega)},
\end{eqnarray*}
so for some $c\equiv c(n,N,P,\tx{d})\ge 1$ it is
\eqn{uplow}
$$
c^{-1}\snr{z_{1}}^{2\tx{d}+1}\le \snr{H^{\prime}(z_{1})}\le c\snr{z_{1}}^{2\tx{d}+1},
$$
where we used the $(2\tx{d}+1)$-homogeneity of $H^{\prime}$. The very definition of $H$ now yields, for $i\in \{0,\ldots,2\tx{d}\}$,
\begin{equation}\label{eqH}
\snr{H^{(i+2)}(z_1)}\le c \snr{z_1}^{2\tx{d}-i} \stackrel{\eqref{uplow}}{\le} c \snr{H^\prime(z_1)}^\frac{2\tx{d}-i}{2\tx{d}+1},
\end{equation}
with $c\equiv c(n,N,P,\tx{d})$. We then jump back to the decomposition $P=Q+H$. Recalling that $H\equiv H(z_{1})$, on $\mathbb{V}^{\perp}$ it is $P=Q$,
so $\left.Q\right|_{\mathbb{V}^{\perp}}$ is an even, convex polynomial of degree $2\ti{\tx{d}}\le 2\tx{d}$. In particular, by the induction hypothesis we have
\begin{equation}\label{indQ}
\snr{Q^{(i+2)}(z_{2})}\le c\left(1+\snr{Q^\prime(z_{2})}^{\frac{2\ti{\tx{d}}-2-i}{2\ti{\tx{d}}-1}}\right)\qquad \mbox{for all} \ \ z_{2}\in \mathbb{V}^{\perp}, \ \ i\in \{0,\cdots,2\ti{\tx{d}}-2\}.
\end{equation}
Next, Taylor-expanding around $z_{2}$, we obtain
\begin{eqnarray}\label{3.18}
\snr{P^{(2+i)}(z)}&\leq&\snr{Q^{(2+i)}(z_{1}+z_{2})}+\snr{H^{(2+i)}(z_{1})}\nonumber \\
&\le&\left|\ \sum_{s=0}^{2\ti{\tx{d}}-2-i}\frac{1}{s\tn{!}}Q^{(2+i+s)}(z_{2})[\odot^{s}z_{1}]\ \right|+\snr{H^{(2+i)}(z_{1})}\nonumber \\
&\leq&c\sum_{s=1}^{2\ti{\tx{d}}-2-i}\frac{1}{s\tn{!}}\snr{Q^{(2+i+s)}(z_{2})}\snr{z_{1}}^s+c\snr{Q^{(2+i)}(z_{2})}+c\snr{H^{(2+i)}(z_{1})}\nonumber \\
&\leq& c\sum_{s=1}^{2\tilde{\tx d}-2-i} \snr{Q^{(2+i+s)}(z_2)}^\frac{2\tx{d}-i}{2\tx{d}-i-s}+c\snr{z_1}^{2\tx{d}-i}+\snr{Q^{(2+i)}(z_2)}+\snr{H^{(2+i)}(z_1)}\nonumber \\
&\stackrel{\eqref{eqH},\eqref{indQ}}{\leq}&c \sum_{s=0}^{2\ti{\tx d}-2-i}\left(1+\snr{Q^\prime(z_2)}^\frac{(2\ti{\tx{d}}-s-i-2)(2\tx{d}-i)}{(2\ti{\tx d}-1)(2\tx{d}-i-s)}\right)
+c \snr{H^\prime(z_1)}^\frac{2\tx{d}-i}{2\tx{d}+1}\nonumber \\
&\leq& c\left(1+\snr{Q^\prime(z_2)}^\frac{2\tx{d}-i}{2\tx{d}+1}\right)+c\snr{H^\prime(z_1)}^\frac{2\tx{d}-i}{2\tx{d}+1},
\end{eqnarray}
where $c\equiv c(n,N,P,\tx{d})$ and we used Young inequality with conjugate exponents $\left(\frac{2\tx{d}-i}{2\tx{d}-i-s},\frac{2\tx{d}-i}{s}\right)$,
and observed that for $s\geq 0$ and $i\geq -1$, it is
\begin{equation}\label{exponents}
\frac{2\tilde{\tx d}-s-i-2}{2\tx{d}-i-s}\leq \frac{2{\tilde{\tx{d}}}-1}{2{\tx{d}}+1}.
\end{equation}
Via Taylor expansion, we further estimate
\begin{eqnarray*}
\snr{Q^{\prime}(z_{2})-Q^{\prime}(z_{1}+z_{2})}&\le&c\sum_{s=1}^{2\ti{\tx{d}}-1}\snr{Q^{(1+s)}(z_{2})}\snr{z_{1}}^{s}\nonumber \\
&\le&c\sum_{s=1}^{2\ti{\tx{d}}-1}\left(\varepsilon\snr{Q^{(1+s)}(z_{2})}^{\frac{2\tx{d}+1}{2\tx{d}+1-s}}+\frac{1}{\varepsilon^{\frac{2\tx{d}+1-s}{s}}}\snr{z_{1}}^{2\tx{d}+1}\right)\nonumber \\
&\stackrel{\eqref{indQ}}{\le}&c\varepsilon\sum_{s=1}^{2\ti{\tx{d}}-1}\left(1+\snr{Q^\prime(z_{2})}^{\frac{2\ti{\tx{d}}-1-s}{2\ti{\tx{d}}-1}}\right)^{\frac{2\tx{d}+1}{2\tx{d}+1-s}}
+\frac{c}{\varepsilon^{2\tx{d}}}\snr{z_{1}}^{2\tx{d}+1}\nonumber \\
&\stackrel{\eqref{exponents}_{i=-1}}{\le}&c\varepsilon\left(1+\snr{Q^{\prime}(z_{2})}\right)+\frac{c}{\varepsilon^{2\tx{d}}}\snr{z_{1}}^{2\tx{d}+1},
\end{eqnarray*}
for $c\equiv c(n,N,P,\tx{d})$ - here we used Young inequality with conjugate exponents $\left(\frac{2\tx{d}+1}{2\tx{d}+1-s},\frac{2\tx{d}+1}{s}\right)$.
The content of the previous display then gives
\begin{eqnarray*}
\snr{Q^{\prime}(z_{2})}&\le& c\varepsilon\left(1+\snr{Q^{\prime}(z_{2})}\right)+\snr{Q^{\prime}(z)}+\frac{c}{\varepsilon^{2\tx{d}}}\snr{z_{1}}^{2\tx{d}+1}\nonumber\\
&\stackrel{\eqref{uplow}}{\le}&c\varepsilon\left(1+\snr{Q^{\prime}(z_{2})}\right)+\snr{Q^{\prime}(z)}+\frac{c}{\varepsilon^{2\tx{d}}}\snr{H^{\prime}(z_{1})},
\end{eqnarray*}
so choosing $\varepsilon\equiv \varepsilon(n,N,P,\tx{d})\in (0,1)$ sufficiently small we end up with
\eqn{3.17}
$$
\snr{Q^{\prime}(z_{2})}\le c(1+\snr{Q^{\prime}(z)})+c\snr{H^{\prime}(z_{1})},
$$
for $c\equiv c(n,N,P,\tx{d})$. Merging estimates \eqref{3.18} and \eqref{3.17} we eventually get
\begin{flalign*}
  \snr{P^{(2+i)}(z)} \le c\left(1+\snr{Q^{\prime}(z)}^{2}+\snr{H^{\prime}(z)}^{2}\right)^{\frac{2\tx{d}-i}{2(2\tx{d}+1)}},
\end{flalign*}
with $c\equiv c(n,N,P,\tx{d})$. Finally, recalling that $Q$ is (at most) the sum of $2s$-homogeneous terms of $P$ with $s\in \{0,\cdots,\tx{d}\}$, i.e.:
$$
Q(z)=P(z)-H(z)=\sum_{s=0}^{2\tx{d}}P_{s}(z),\qquad \quad P_{2s+1}\equiv 0\quad \mbox{for all} \ \ s\in \{0,\cdots,\tx{d}-1\},
$$
the nonnegativity of the $P_{s}$'s imply
$$
\langle P^{\prime}_{s}(z),z\rangle \stackrel{\eqref{euler}}{=}sP_{s}(z)\ge 0 \ \Longrightarrow \ \langle Q^{\prime}(z),z\rangle \ge 0.
$$
Keeping in mind that $H$ is convex, even and $(2\tx{d}+2)$-homogeneous, by Lemma \ref{lemma} we find $\delta\equiv \delta(n,N,P,\tx{d})\in [0,1)$ such that
$$
  (1-\delta)(\snr{Q^\prime(z)}^2+\snr{H^\prime(z)}^2)\leq \snr{Q^\prime(z)}^2+\snr{H^\prime(z)}^2-2\delta \snr{Q^\prime(z)}\snr{H^\prime(z)}
  \stackrel{\eqref{l.1}}{\leq} \snr{Q^\prime(z)+H^\prime(z)}^2= \snr{P^\prime(z)}^2.
$$
Combining the last four displays we end up with \eqref{ePolyn}$_{\tx{d}+1}$ and the proof is complete.
\end{proof}
As a direct consequence of Lemma \ref{lemma} and Proposition \ref{poly} we can show that whenever a convex, even, positively $s$-homogeneous function is
added to an integrand satisfying Legendre $(p,q)$-growth conditions, the sum verifies Legendre $(p,\max(s,q))$-growth conditions.
\begin{corollary}\label{cor38}
Let $Q,H\in \CC^{2}(\mathbb{R}^{N\times n})$ be functions such that $Q$ satisfies $\eqref{assf}_{3}$, for some exponents $2\le p\le q$, and $H$ is convex, even,
and $s$-homogeneous for some $s\ge 2$. Then the sum $Q+H$ satisfies $(p,\max\{q,s\})$-Legendre growth conditions.
\end{corollary}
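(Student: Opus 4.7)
\emph{Proof proposal.} Set $F:=Q+H$ and $q^{\ast}:=\max\{q,s\}$; we must verify both inequalities in $\eqref{assf}_{3}$ for $F$ with exponents $(p,q^{\ast})$. The lower ellipticity bound is immediate: since $H$ is convex and $\CC^{2}$, $\langle H^{\prime\prime}(z)\xi,\xi\rangle\ge 0$, so the lower bound in $\eqref{assf}_{3}$ for $Q$ transfers additively to $F$ and gives $\langle F^{\prime\prime}(z)\xi,\xi\rangle\ge L^{-1}\ell_{\mu}(z)^{p-2}|\xi|^{2}$.

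For the upper bound $|F^{\prime\prime}(z)|\le c(1+|F^{\prime}(z)|^{(q^{\ast}-2)/(q^{\ast}-1)})$, the essential obstacle is to prevent catastrophic cancellation in $|F^{\prime}(z)|=|Q^{\prime}(z)+H^{\prime}(z)|$ relative to $|Q^{\prime}(z)|$ and $|H^{\prime}(z)|$. This is exactly what Lemma \ref{lemma} achieves, provided the auxiliary condition $\langle Q^{\prime}(z),z\rangle\ge 0$ holds. To secure this, I would replace $Q$ by the shifted function $\tilde Q(z):=Q(z)-Q(0)-\langle Q^{\prime}(0),z\rangle$: convexity of $Q$ (inherited from the lower bound in $\eqref{assf}_{3}$) then forces $\langle \tilde Q^{\prime}(z),z\rangle=\langle Q^{\prime}(z)-Q^{\prime}(0),z\rangle\ge 0$, while $\tilde Q^{\prime\prime}=Q^{\prime\prime}$ and $\tilde F^{\prime}=F^{\prime}-Q^{\prime}(0)$, so the target estimate for $F$ reduces to its analogue for $\tilde F:=\tilde Q+H$ up to additive adjustments of the constants that depend on $|Q^{\prime}(0)|$. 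Lemma \ref{lemma} then delivers $\langle \tilde Q^{\prime}(z),H^{\prime}(z)\rangle\ge -\delta|\tilde Q^{\prime}(z)||H^{\prime}(z)|$ for some $\delta\equiv\delta(H,s)\in[0,1)$, whence expanding $|\tilde Q^{\prime}(z)+H^{\prime}(z)|^{2}$ yields
\begin{equation*}
|\tilde Q^{\prime}(z)|^{2}+|H^{\prime}(z)|^{2}\le (1-\delta)^{-1}|\tilde F^{\prime}(z)|^{2},
\end{equation*}
so that $|\tilde Q^{\prime}|$ and $|H^{\prime}|$ are separately controlled by $|\tilde F^{\prime}|$.

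Once this separation is available, the upper bound in $\eqref{assf}_{3}$ for $Q$ together with the monotonicity of $x\mapsto(x-2)/(x-1)$ and $q\le q^{\ast}$ produces $|Q^{\prime\prime}(z)|\le L(1+|Q^{\prime}(z)|^{(q-2)/(q-1)})\le c(1+|\tilde F^{\prime}(z)|^{(q^{\ast}-2)/(q^{\ast}-1)})$. For $H$, I would recycle the argument from the proof of Proposition \ref{poly}: decomposing $\mathbb{R}^{N\times n}=\V\oplus\V^{\perp}$ where $\V$ is the span of the symmetric compact convex set whose support function is $H^{1/s}$, and splitting $z=z_{1}+z_{2}$ accordingly, the identity $H(z)\equiv H(z_{1})$ combined with $s$-homogeneity and convexity gives the two-sided comparison $|H^{\prime}(z)|\sim|z_{1}|^{s-1}$, precisely as in \eqref{uplow}. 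Since $H\in\CC^{2}$ and $H^{\prime\prime}(z)\equiv H^{\prime\prime}(z_{1})$ is $(s-2)$-homogeneous on $\V$, this gives $|H^{\prime\prime}(z)|\le c|z_{1}|^{s-2}\le c|H^{\prime}(z)|^{(s-2)/(s-1)}\le c(1+|\tilde F^{\prime}(z)|^{(q^{\ast}-2)/(q^{\ast}-1)})$, where the last step uses $s\le q^{\ast}$ and the same monotonicity as before. Adding the bounds on $|Q^{\prime\prime}|$ and $|H^{\prime\prime}|$ and reverting the linear shift $\tilde F\leftrightarrow F$ completes the proof. The main obstacle is the anti-cancellation estimate of Lemma \ref{lemma}, which exploits the interplay between convexity, evenness, and homogeneity of $H$ and is what makes the reduction $Q\to\tilde Q$ necessary; everything else is bookkeeping.
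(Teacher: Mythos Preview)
Your proof is correct and follows the same route as the paper: decompose $\mathbb{R}^{N\times n}=\V\oplus\V^{\perp}$ via the support-function construction to get the two-sided bound $|H'(z)|\sim|z_{1}|^{s-1}$ and hence $|H''(z)|\lesssim|H'(z)|^{(s-2)/(s-1)}$, combine this with $\eqref{assf}_{3}$ for $Q$, and then invoke Lemma~\ref{lemma} to pass from $|Q'|^{2}+|H'|^{2}$ to $|Q'+H'|^{2}$. The one point where you are actually more careful than the paper is the affine shift $Q\to\tilde Q$: Lemma~\ref{lemma} requires $\langle Q'(z),z\rangle\ge 0$, which $\eqref{assf}_{3}$ alone does not guarantee, and the paper applies \eqref{l.1} without commenting on this; your normalization $\tilde Q'(0)=0$ together with convexity of $Q$ secures the hypothesis, and the cost is only an additive dependence on $|Q'(0)|$ in the final constant.
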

\begin{proof}
The lower bound in $\eqref{assf}_{3}$ is preserved by the sum $Q+H$ as $Q$ satisfies $\eqref{assf}_{3}$, and $H$ is convex. Concerning the upper bound, if $z=0$,
there is nothing to prove, otherwise, being $H$ convex, $s$-homogeneous ($s\ge 2$), and even, we can repeat verbatim the construction detailed in
the "Inductive step" of the proof of Proposition \ref{poly} up to \eqref{uplow} to decompose $\mathbb{R}^{N\times n}=\mathbb{V}\oplus \mathbb{V}^{\perp}$, so that
on $\mathbb{V}$ the double bound in \eqref{uplow} holds true for the map $H$. Splitting $\mathbb{R}^{N\times n}\ni z=z_{1}+z_{2}\in \mathbb{V}\oplus\mathbb{V}^{\perp}$, we control
\begin{eqnarray*}
  \snr{Q^{\prime\prime}(z)+H^{\prime\prime}(z)}&\stackrel{\eqref{assf}_{3},\eqref{supp}}{\le}&c\left(1+\snr{Q^{\prime}(z)}^{\frac{q-2}{q-1}}\right)
  +\snr{z_{1}}^{s-2}\left | H^{\prime\prime}\left(z_{1}/\snr{z_{1}}\right)\right|\nonumber \\
&\stackrel{\eqref{uplow}}{\le}&c\left(1+\snr{Q^{\prime}(z)}^{\frac{q-2}{q-1}}\right)+c\snr{H^{\prime}(z_{1})}^{\frac{s-2}{s-1}}\nonumber \\
&\leq& c\left(1+\snr{Q^\prime(z)}^2+\snr{H^\prime(z)}^2\right)^\frac{\max\{s,q\}-2}{2(\max\{s,q\}-1)}\\
&\stackrel{\eqref{l.1}}{\le}&c\left(1+\snr{Q^{\prime}(z)+H^{\prime}(z)}^{\frac{\max\{s,q\}-2}{\max\{s,q\}-1}}\right),
\end{eqnarray*}
with $c\equiv c(n,N,L,p,q,s,H)$.
\end{proof}
Next, we focus on those convex, even polynomials that are strongly $p$-elliptic in the sense of $\eqref{assf}_{3}$ (left-hand side), and show that in the
degenerate case $p>2$, $\mu=0$, the bound in \eqref{assf.1} holds true.
\begin{corollary}\label{poly.er}
Let $\tx{d}_{0},\tx{d}\in \mathbb{N}$, be such that $2< 2\tx{d}_{0}<2\tx{d}$, and let $P$  be an even, convex polynomial of degree $2\tx{d}$ with nonnegative
homogeneous components $P_{s}$ such that $P_{s}\equiv 0$ for all $s\in \{1,\cdots,2(\tx{d}_{0}-1)\}$. 
Then \eqref{assf.1} is verified with $q=2\tx{d}$, for all $2< p\le 2\tx{d}_{0}$, that is
\eqn{er.0}
    $$
  \frac{\snr{P^{\prime\prime}(z)}}{\snr{z}^{p-2}}\le c\left(1+\snr{P^{\prime}(z)}^{\frac{2\tx{d}-p}{2\tx{d}-1}}\right)\qquad \quad \mbox{for all} \ \ z\in \mathbb{R}^{N\times n}\setminus \{0\},
    $$
    with $c\equiv c(n,N,p,\tx{d},P)$.
\end{corollary}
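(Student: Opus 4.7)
The plan is to split according to the size of $\snr{z}$: for $\snr{z}\le 1$ we exploit the vanishing of the low-order homogeneous components of $P$ at the origin, while for $\snr{z}\ge 1$ we invoke Proposition \ref{poly} combined with the trivial polynomial bound on $P^{\prime}$.

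For $\snr{z}\le 1$, the hypothesis $P_{s}\equiv 0$ for $s\in \{1,\ldots,2(\tx{d}_{0}-1)\}$ allows one to write $P^{\prime\prime}(z)=\sum_{s=2\tx{d}_{0}}^{2\tx{d}}P_{s}^{\prime\prime}(z)$, where each $P_{s}^{\prime\prime}$ is positively $(s-2)$-homogeneous. This immediately yields $\snr{P^{\prime\prime}(z)}\le c\snr{z}^{2\tx{d}_{0}-2}$, and dividing by $\snr{z}^{p-2}$ gives $\snr{P^{\prime\prime}(z)}/\snr{z}^{p-2}\le c\snr{z}^{2\tx{d}_{0}-p}\le c$, since $p\le 2\tx{d}_{0}$ and $\snr{z}\le 1$. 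This already dominates the right-hand side of \eqref{er.0}.

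For $\snr{z}\ge 1$, Proposition \ref{poly} applied with $i=0$ gives $\snr{P^{\prime\prime}(z)}\le c(1+\snr{P^{\prime}(z)}^{(2\tx{d}-2)/(2\tx{d}-1)})$, and since $\snr{z}^{2-p}\le 1$ it remains to control $\snr{z}^{2-p}\snr{P^{\prime}(z)}^{(2\tx{d}-2)/(2\tx{d}-1)}$ by a multiple of $\snr{P^{\prime}(z)}^{(2\tx{d}-p)/(2\tx{d}-1)}$. Splitting the exponent as $2\tx{d}-2=(p-2)+(2\tx{d}-p)$, both summands being nonnegative because $2\le p\le 2\tx{d}$, and using the elementary estimate $\snr{P^{\prime}(z)}\le c\snr{z}^{2\tx{d}-1}$, which holds because $P^{\prime}$ is a polynomial of degree at most $2\tx{d}-1$, the factor $\snr{z}^{2-p}\snr{P^{\prime}(z)}^{(p-2)/(2\tx{d}-1)}\le c$ is absorbed into the constant, and the remaining factor $\snr{P^{\prime}(z)}^{(2\tx{d}-p)/(2\tx{d}-1)}$ yields \eqref{er.0}.

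I do not anticipate any essential obstacle: the substantive analytical work has already been carried out in Proposition \ref{poly}, and here we are merely refining the power of $\snr{z}$ appearing in the denominator by exploiting the vanishing of $P$'s derivatives at $0$ encoded by the gap up to degree $2\tx{d}_{0}$.
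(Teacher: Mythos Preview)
Your proof is correct and follows essentially the same approach as the paper's: the identical split at $\snr{z}=1$, the use of homogeneity of the components $P_{s}^{\prime\prime}$ for small $\snr{z}$, and the application of Proposition~\ref{poly} together with the bound $\snr{P^{\prime}(z)}\le c\snr{z}^{2\tx{d}-1}$ and the exponent decomposition $(2\tx{d}-2)=(p-2)+(2\tx{d}-p)$ for large $\snr{z}$.
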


\begin{proof}
By definition\footnote{By evenness all the homogeneous components of $P$ of odd degree vanish.} of $P$,
\eqn{hom.1}
$$
\snr{P^{\prime\prime}(z)}\le \sum_{s=2\tx{d}_{0}}^{2\tx{d}}\snr{P_{s}^{\prime\prime}(z)}\le\sum_{s=\tx{d}_{0}}^{\tx{d}}\snr{z}^{2s-2}
\left|P_{2s}^{\prime\prime}\left(z/\snr{z}\right)\right|\le c \sum_{s=\tx{d}_{0}}^{\tx{d}}\snr{z}^{2s-2},
$$
with $c\equiv c(P,\tx{d})$. Now, keeping in mind that $2< p\le 2\tx{d}_{0}$, if $\snr{z}\in (0,1)$ we bound
$$
\frac{\snr{P^{\prime\prime}(z)}}{\snr{z}^{p-2}}\stackrel{\eqref{hom.1}}{\le} c\sum_{s=\tx{d}_{0}}^{\tx{d}}\snr{z}^{2s-p}\stackrel{\snr{z}\le 1}{\le}c,
$$
for $c\equiv c(P,p,\tx{d})$. We next observe that, whenever $\snr{z}\ge 1$, the $(2s-1)$-homogeneity of $P_{2s}^\prime$ gives
\eqn{p'}
$$
\snr{P^{\prime}(z)}\le \sum_{s=2\tx{d}_{0}}^{2\tx{d}}\snr{P_{s}}^{\prime}(z)\le \sum_{s=\tx{d}_{0}}^{\tx{d}}\snr{z}^{2s-1}\left|P_{2s}^{\prime}\left(z/\snr{z}\right)\right|
\le c\sum_{s=\tx{d}_{0}}^{\tx{d}}\snr{z}^{2s-1}\stackrel{\snr{z}\ge 1}{\le}c\snr{z}^{2\tx{d}-1},
$$
for $c\equiv c(P,\tx{d})$, so we estimate
\begin{flalign*}
  \frac{\snr{P^{\prime\prime}(z)}}{\snr{z}^{p-2}}\stackrel{\eqref{ePolyn}}{\le}\frac{c}{\snr{z}^{p-2}}\left(1+\snr{P^{\prime}(z)}^{\frac{2\tx{d}-2}{2\tx{d}-1}}\right)
  \stackrel{\snr{z}\ge 1}{\le}c+\frac{c\snr{P^{\prime}(z)}^{\frac{2\tx{d}-2}{2\tx{d}-1}}}{\snr{z}^{p-2}}\stackrel{\eqref{p'}}{\le}c\left(1+\snr{P^{\prime}(z)}^{\frac{2\tx{d}-p}{2\tx{d}-1}}\right),
\end{flalign*}
with $c\equiv c(P,p,\tx{d})$, and the proof is complete.
\end{proof}
\begin{remark}
  \emph{Thanks to Corollary \ref{cor38}, we immediately see that the class of integrands featuring Legendre $(p,q)$-growth conditions embraces the anisotropic
    polynomial examples introduced by Marcellini in \cite{ma1}. In fact, repeated applications of Corollary \ref{cor38} to the integrand in
    \eqref{modelpq2}-\eqref{modelpq.2} yield that $P$ satisfies $(p,q_{n})$-Legendre growth conditions, and the prototypical examples of integrands with $(p,q)$-growth are covered.}
\end{remark}

\subsection{An abstraction of the quantified Legendre condition}\label{abstraction} Let $F$, $G \colon \M \to \R$ be two strictly convex, $\CC^1$ integrands
that are super-linear, i.e.:
$$
\frac{F(z)}{|z|} \to \infty \, \mbox{ and } \, \frac{G(z)}{|z|} \to \infty \, \mbox{ as } \, |z| \to \infty .
$$
Consequently also the Fenchel conjugates $F^\ast$ and $G^\ast$ are strictly convex, $\CC^1$ and super-linear, that is, $F$ and $G$ real-valued, are super-linear Legendre integrands.
The goal of this subsection is the following result and its corollary, that can be seen as an abstract counterpart of $\eqref{assf}_{3}$, covering more general convexity conditions than the usual power type one.

\begin{proposition}\label{abstractprop}
The difference of the Fenchel conjugates $F^{\ast}-G^{\ast}$, is convex if and only if 
$$
F(z+z_{0})-F(z_{0})-\langle F^{\prime}(z_{0}),z\rangle \leq G\left( z+ (G^{\ast})^{\prime}(F^{\prime}(z_{0})) \right) - G\left( (G^{\ast})^{\prime}(F^{\prime}(z_{0})) \right)
- \langle G^{\prime}\left( (G^{\ast})^{\prime}(F^{\prime}(z_{0})) \right),z\rangle
$$
holds for all $z$, $z_{0} \in \M$.
\end{proposition}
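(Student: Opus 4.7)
The plan is to read both sides of the claimed inequality as Bregman divergences sharing a common dual base point, and then pass to the dual through Fenchel conjugation in the variable $z$.

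Set $\xi_{0}:=F^{\prime}(z_{0})$. The definition $w_{0}=(G^{\ast})^{\prime}(F^{\prime}(z_{0}))$ combined with the inverse relations $(G^{\ast})^{\prime}=(G^{\prime})^{-1}$ in \eqref{derivatives} gives $G^{\prime}(w_{0})=\xi_{0}$, so both sides of the inequality become Bregman divergences centered at the same $\xi_{0}$:
\[
\Phi(z,z_{0}):=F(z_{0}+z)-F(z_{0})-\langle \xi_{0},z\rangle, \qquad \Psi(z,z_{0}):=G(w_{0}+z)-G(w_{0})-\langle \xi_{0},z\rangle.
\]
Both are convex, lower semicontinuous and real-valued in $z$, vanish at $z=0$, and are nonnegative.

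The key computation is the Fenchel conjugate of $\Phi(\cdot,z_{0})$ in $z$: using the Fenchel--Young equality $F(z_{0})+F^{\ast}(\xi_{0})=\langle z_{0},\xi_{0}\rangle$ and the substitution $y=z_{0}+z$, a direct calculation yields
\[
\Phi^{\ast}(\eta)=F^{\ast}(\eta+\xi_{0})-F^{\ast}(\xi_{0})-\langle z_{0},\eta\rangle =D_{F^{\ast}}(\eta+\xi_{0},\xi_{0}),
\]
where $D_{F^{\ast}}$ denotes the Bregman divergence of $F^{\ast}$ (recall that $(F^{\ast})^{\prime}(\xi_{0})=z_{0}$). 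The analogous identity $\Psi^{\ast}(\eta)=D_{G^{\ast}}(\eta+\xi_{0},\xi_{0})$ holds using $(G^{\ast})^{\prime}(\xi_{0})=w_{0}$. Since $\Phi$ and $\Psi$ are convex and lower semicontinuous, $\Phi^{\ast\ast}=\Phi$ and $\Psi^{\ast\ast}=\Psi$, and Fenchel conjugation reverses order, so the inequality $\Psi(z,z_{0})\ge\Phi(z,z_{0})$ holding for all $z$ is equivalent to
\[
D_{G^{\ast}}(\eta+\xi_{0},\xi_{0})\le D_{F^{\ast}}(\eta+\xi_{0},\xi_{0}) \qquad\mbox{for all} \ \ \eta\in\M.
\]

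Finally, I would let $z_{0}$ range over $\M$; since $F$ is super-linear and strictly convex $\CC^{1}$, the map $F^{\prime}\colon\M\to\M$ is a homeomorphism, so $\xi_{0}=F^{\prime}(z_{0})$ ranges over all of $\M$. Rewriting the displayed inequality above gives
\[
(F^{\ast}-G^{\ast})(\eta+\xi_{0})\ge(F^{\ast}-G^{\ast})(\xi_{0})+\langle(F^{\ast})^{\prime}(\xi_{0})-(G^{\ast})^{\prime}(\xi_{0}),\eta\rangle \qquad\mbox{for all}\ \ \eta,\xi_{0}\in\M,
\]
i.e.\ $F^{\ast}-G^{\ast}$ lies above each of its tangent affine functions. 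Since $F^{\ast}-G^{\ast}$ is $\CC^{1}$ by Lemma \ref{strictcvx}, this is precisely the characterization of convexity, yielding both implications simultaneously. I do not anticipate serious obstacles; the delicate point is just being careful to note that $\xi_{0}$ sweeps out all of $\M$ and to justify the biconjugate identities $\Phi^{\ast\ast}=\Phi$, $\Psi^{\ast\ast}=\Psi$, which follow from the super-linearity of $F,G$ (equivalently, real-valuedness of $F^{\ast},G^{\ast}$) via Lemma \ref{strictcvx}.
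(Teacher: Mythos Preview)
Your proof is correct and follows essentially the same route as the paper's: both arguments hinge on the identity that the Fenchel conjugate (in $z$) of the Bregman divergence of $F$ at $z_0$ equals the Bregman divergence of $F^{\ast}$ at $\xi_0=F'(z_0)$, and then use that conjugation is order-reversing together with $\Phi^{\ast\ast}=\Phi$, $\Psi^{\ast\ast}=\Psi$. The paper runs the chain in the opposite order (starting from the supporting-hyperplane inequality for $F^{\ast}-G^{\ast}$ and conjugating to land on the primal inequality) and is terser about the biconjugate step, but the mathematical content is identical; your explicit invocation of Lemma~\ref{strictcvx} and the homeomorphism $F'\colon\M\to\M$ to let $\xi_0$ sweep all of $\M$ matches what the paper uses implicitly.
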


The condition can be recast as a second order condition if we interpret it in the viscosity sense:

\begin{corollary}\label{abstractcor}
The difference of the Fenchel conjugates $F^{\ast}-G^{\ast}$, is convex if and only if 
$$
F^{\prime \prime}(z_{0}) \leq G^{\prime \prime}\left( (G^{\ast})^{\prime}(F^{\prime}(z_{0})) \right)
$$
holds as quadratic forms in the viscosity sense for all $z_{0} \in \M$.
\end{corollary}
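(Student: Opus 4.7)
The plan is to deduce Corollary \ref{abstractcor} from Proposition \ref{abstractprop} by a second-order Taylor argument around the two base points $z_0$ and $w_0:=(G^*)'(F'(z_0))$. First, I would record the identity $G'(w_0)=F'(z_0)$, which is a direct consequence of \eqref{derivatives} applied to $G$. This means that both remainders
\begin{equation*}
\Phi_F(z):=F(z_0+z)-F(z_0)-\langle F'(z_0),z\rangle,\qquad \Phi_G(z):=G(w_0+z)-G(w_0)-\langle G'(w_0),z\rangle
\end{equation*}
are nonnegative, vanish at $z=0$, and have vanishing gradient there. Proposition \ref{abstractprop} therefore restates convexity of $F^*-G^*$ as the comparison $\Phi_F(z)\leq \Phi_G(z)$ for all $z,z_0\in\M$.

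For the forward implication, I would substitute $z=t\xi$ in this comparison, divide by $t^2/2$, and let $t\to 0^+$. The first-order terms cancel automatically, and when $F,G$ are $C^2$ one obtains $\langle F''(z_0)\xi,\xi\rangle\leq\langle G''(w_0)\xi,\xi\rangle$ for every $\xi\in\M$, which is the claimed quadratic-form inequality pointwise. In the merely $C^1$ case this limit is exactly the jet-theoretic form of the same comparison: any second-order superjet of $\Phi_F$ at $0$ is dominated by every second-order subjet of $\Phi_G$ at $0$, which is the standard viscosity transcription of ``$F''(z_0)\leq G''(w_0)$ as quadratic forms''.

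For the reverse implication, the cleanest route is convex duality. Setting $\xi_0:=F'(z_0)=G'(w_0)$, Lemma \ref{strongcvx} gives, wherever classical second derivatives exist, $(F^*)''(\xi_0)=F''(z_0)^{-1}$ and $(G^*)''(\xi_0)=G''(w_0)^{-1}$. Since inversion reverses the order on positive definite symmetric matrices, $F''(z_0)\leq G''(w_0)$ is equivalent to $(F^*-G^*)''(\xi_0)\geq 0$. As $F'$ is a homeomorphism of $\M$ by strict convexity and super-linearity, $\xi_0$ ranges over all of $\M$, so this is exactly the nonnegativity of the Hessian of $F^*-G^*$ on $\M$, equivalent to convexity of $F^*-G^*$ by the classical characterisation.

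The main subtlety is to run both equivalences uniformly under the mere $C^1$ regularity assumed on $F,G$, since the Hessian manipulations above are literal only where second derivatives exist. This is precisely the role of the viscosity qualifier in the statement: by phrasing the whole equivalence at the first-order level via Proposition \ref{abstractprop}, the two ``second-order'' passages collapse to sending $t\to 0^+$ in $\Phi_F(t\xi)\leq\Phi_G(t\xi)$, which is valid for $C^1$ integrands, and no extra regularity hypothesis on $F$ or $G$ is needed.
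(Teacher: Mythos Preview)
The paper does not give a separate proof of Corollary \ref{abstractcor}; it simply remarks that Proposition \ref{abstractprop} ``can be recast as a second order condition if we interpret it in the viscosity sense'' and then states the corollary. In other words, the paper treats the viscosity formulation of $F''(z_0)\le G''(w_0)$ as \emph{synonymous} with the comparison of Taylor remainders $\Phi_F\le\Phi_G$, so the corollary is essentially a relabelling of the proposition rather than a new result.

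Your forward direction is correct and is exactly the natural passage from the first-order inequality to the second-order one. Your reverse direction, however, has a genuine gap. The matrix-inversion argument via Lemma \ref{strongcvx} requires $F,G\in\CC^2$ with $\det F''\neq 0$, $\det G''\neq 0$, which is not assumed in Section \ref{abstraction}; strict convexity together with $\CC^1$-regularity does not furnish invertible Hessians (or any Hessians at all). More importantly, your final paragraph does not repair this: sending $t\to 0^+$ in $\Phi_F(t\xi)\le\Phi_G(t\xi)$ only ever takes you \emph{from} the global remainder comparison \emph{to} the infinitesimal one, never back. So the claim that ``the two second-order passages collapse to sending $t\to 0^+$'' is simply false for the reverse implication.

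The clean resolution is to realise that, under the paper's intended reading, there is nothing left to prove: once one \emph{defines} the viscosity inequality $F''(z_0)\le G''((G^*)'(F'(z_0)))$ to mean precisely the pointwise domination $\Phi_F\le\Phi_G$ of the convex $\CC^1$ remainders (this being the standard way to give second-order meaning to a $\CC^1$ comparison), the corollary is Proposition \ref{abstractprop} verbatim. If instead you want an honest independent proof of the reverse implication under genuine $\CC^2$ hypotheses, you should state those hypotheses explicitly and drop the appeal to the $t\to 0^+$ limit.
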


\begin{proof}[Proof of Proposition \ref{abstractprop}]
Note that the derivatives $F^\prime$ and $G^\prime$ are homeomorphisms of $\M$ with inverses $(F^{\ast})^\prime$ and $(G^{\ast})^\prime$, respectively.
Fix $z_0 \in \M$ and put $\xi_{0} = F^{\prime}(z_{0})$. Now $F^{\ast}-G^{\ast}$ is convex at $\xi_0$ if and only if
$$
\left( F^{\ast}-G^{\ast}\right)( \xi ) \geq \left( F^{\ast}-G^{\ast}\right)( \xi_{0} )+\langle\left( F^{\ast}-G^{\ast}\right)^{\prime}(\xi_{0} ),\xi - \xi_{0}\rangle
$$
holds for all $\xi \in \M$, that is,
$$
F^{\ast}( \xi ) - F^{\ast}( \xi_{0}) - \langle(F^{\ast})^{\prime}( \xi_{0}) , \xi - \xi_{0}\rangle \geq 
G^{\ast}( \xi ) - G^{\ast}( \xi_{0}) - \langle(G^{\ast})^{\prime}( \xi_{0}), \xi - \xi_{0}\rangle
$$
for all $\xi \in \M$. Because $(F^{\ast})^{\prime}( \xi_{0})=z_0$ we get by Fenchel conjugation that this inequality is equivalent to
\begin{flalign*}
&\sup_{\xi\in \mathbb{R}^{N\times n}} \left( \langle z, \xi\rangle - F^{\ast}( \xi ) + F^{\ast}( \xi_{0}) + \langle (F^{\ast})^{\prime}( \xi_{0}),  \xi - \xi_{0}\rangle \right) \nonumber \\
&\qquad \qquad \quad \leq 
\sup_{\xi\in \mathbb{R}^{N\times n}} \left( \langle z, \xi\rangle - G^{\ast}( \xi ) + G^{\ast}( \xi_{0}) + \langle (G^{\ast})^{\prime}( \xi_{0}) , \xi - \xi_{0}\rangle \right)
\end{flalign*}
for all $z \in \M$. Recalling that $F^{\ast \ast} = F$, $G^{\ast \ast} = G$ we can rewrite this as
$$
F(z+z_{0}) +F^{\ast}( \xi_{0})-\langle z_{0}, F^{\prime}(z_{0})\rangle \leq G(z+(G^{\ast})^{\prime}(\xi_{0}))+G^{\ast}(\xi_{0})-\langle (G^{\ast})^{\prime}( \xi_{0}) , \xi_{0}\rangle
$$
for all $z \in \M$. Because $F(z_{0})+F^{\ast}(\xi_{0})=\langle z_{0}, \xi_{0}\rangle$ and
$G^{\ast}(\xi_{0})+G( (G^{\ast})^{\prime}( \xi_{0})) = \langle\xi_{0}, (G^{\ast})^{\prime}( \xi_{0})\rangle$ the last inequality can be rewritten as
$F(z+z_{0})-F(z_{0}) \leq G(z+(G^{\ast})^{\prime}( \xi_{0})) - G( (G^{\ast})^{\prime}( \xi_{0}))$ for all $z \in \M$. Consequently, using that
$\xi_{0}=F^{\prime}(z_{0})$ and $G^{\prime}( (G^{\ast})^{\prime}(F^{\prime}(z_{0})))=\xi_{0}$ we arrive at the inequality
$$
F(z+z_{0})-F(z_{0})-\langle F^{\prime}(z_{0}) , z\rangle \leq G(z+(G^{\ast})^{\prime}(F^{\prime}(z_{0}))) - G((G^{\ast})^{\prime}(F^{\prime}(z_{0}))) - \langle F^{\prime}(z_{0}) , z\rangle
$$
for all $z \in \M$, as required.
\end{proof}

\section{Higher differentiability under Legendre \texorpdfstring{$(p,q)$}{}-growth}\label{hdd}
In this section we derive some regularity results for local minimizers of variational integrals under Legendre $(p,q)$-growth.
Our focus is on the higher differentiability of minima, that will subsequently be used to prove finer regularity in low dimension and in the scalar setting.

\subsection{Approximation scheme}\label{as} Our approximation scheme is rather basic and aims at correcting two relevant structural issues of the integrand $F$:
unbalanced growth and possible degeneracy. Let $B\equiv B_{\rrr}(x_{B})\Subset \Omega$ be a ball with radius $0<\texttt{r}\le 1$.
We regularize $u\in \WW^{1,p}_{\loc}(\Omega,\mathbb{R}^{N})$ via convolution against a sequence of mollifiers $\{\phi_{\varepsilon}\}_{\varepsilon>0}\subset \CC^{\infty}_{c}(\Rn )$
thus determining a sequence $\{\ti{u}_{\varepsilon}\}_{\varepsilon>0}:=\{u*\phi_{\varepsilon}\}_{\varepsilon>0}\subset \CC^{\infty}_{\loc}(\Omega,\Y)$, set
\eqn{ge}
$$
\gamma_{\varepsilon}:=\left(1+\varepsilon^{-1}+\varepsilon^{-1}\nr{\nabla \ti{u}_{\varepsilon}}_{\LL^{q}(B)}^{2q}\right)^{-1}\qquad
\mbox{so that}\qquad \gamma_{\varepsilon}\nr{\nabla\ti{u}_{\varepsilon}}_{\LL^{q}(B)}^{q}\searrow_{\varepsilon\to 0}0,
$$
define integrand
\eqn{ge.1}
$$
\M \ni z\mapsto F_{\varepsilon}(z):=F(z)+\gamma_{\varepsilon}\ell_{1}(\nabla u_{\varepsilon})^{q},
$$
and introduce the family of approximating functionals
$$
\WW^{1,q}(B,\Y)\ni w\mapsto  \F_{\varepsilon}(w;B):=\int_{B}F_{\varepsilon}(\nabla w)\d x.
$$
By \eqref{assf}$_{3}$ and basic direct methods, the Dirichlet problem
\eqn{pde}
$$
\ti{u}_{\varepsilon}+\WW^{1,q}_{0}(B,\Y)\ni w\mapsto \min \F_{\varepsilon}(w;B)
$$
admits a unique solution $u_{\varepsilon}\in \left(\ti{u}_{\varepsilon}+\WW^{1,q}_{0}(B,\Y)\right)$, verifying by minimality the integral identity
\eqn{els}
$$
0=\int_{B}\langle F'_{\varepsilon}(\nabla u_{\varepsilon}),\nabla w\rangle\d x\qquad \mbox{for all} \ \ w\in \WW^{1,q}_{0}(B,\Y).
$$
The convergence features of the sequence of minima obtained solving the approximating problems in \eqref{pde} are well known, see e.g. \cite[Section 3]{ckp1}:
\eqn{approx}
$$
\F_{\varepsilon}(u_{\varepsilon};B)\to \F(u;B),\qquad \gamma_{\varepsilon}\nr{\nabla u_{\varepsilon}}_{L^{q}(B)}^{q}\searrow 0,\qquad u_{\varepsilon}
\to u \ \ \mbox{strongly in} \ \ \WW^{1,p}(B,\Y).
$$
Let us quickly show that $F_{\varepsilon}\in \CC^{2}(\mathbb{R}^{N\times n})$ enjoys both growth/ellipticity features of the $q$-Laplacian type, and Legendre $(p,q)$-growth.
\begin{lemma}
With \eqref{assf} in force, let $F_{\varepsilon}$ be the integrand defined in \eqref{ge.1}. Then
\begin{flalign}\label{cdq}
\left\{
\begin{array}{c}
\displaystyle
\ \gamma_{\varepsilon}\ell_{1}(z)^{q}\le F_{\varepsilon}(z)\le \Lambda\ell_{1}(z)^{q},\\ [8pt]\displaystyle
\ \frac{1}{\Lambda}\ell_{\mu}(z)^{p-2}\snr{\xi}^{2}+\frac{\gamma_{\varepsilon}}{\Lambda}\ell_{1}(z)^{q-2}\snr{\xi}^{2}\le \langle F''_{\varepsilon}(z)\xi,\xi\rangle,\\ [8pt]\displaystyle
\ \snr{F''_{\varepsilon}(z)}\le \Lambda\ell_{1}(z)^{q-2},\\ [8pt]\displaystyle
\ \snr{F''_{\varepsilon}(z)}\le \Lambda \left(1+\snr{F'_{\varepsilon}(z)}^{\frac{q-2}{q-1}}\right),
\end{array}
\right.
\end{flalign}
for all $z,\xi\in \mathbb{R}^{N\times n}$ and some $\Lambda\equiv \Lambda(n,N,L,p,q)$.
\end{lemma}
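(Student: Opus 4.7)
The first three inequalities are direct consequences of the additive structure $F_\varepsilon(z) = F(z) + H(z)$ with $H(z) := \gamma_\varepsilon \ell_1(z)^q$. The lower bound in \eqref{cdq}$_1$ follows from $F(z) \ge L^{-1}\ell_\mu(z)^p \ge 0$, while the upper bound uses $F(z) \le 2L\ell_1(z)^q$ (valid since $\mu \le 1$ and $p \le q$) combined with $\gamma_\varepsilon \le 1$. For \eqref{cdq}$_2$ I split $F''_\varepsilon = F'' + H''$, invoke the $p$-ellipticity in \eqref{assf}$_3$ on the first summand, and use the elementary identity $\langle H''(z)\xi, \xi\rangle \ge q\gamma_\varepsilon \ell_1(z)^{q-2}|\xi|^2$ valid for $q \ge 2$. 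For \eqref{cdq}$_3$ I combine the controlled $(q-2)$-growth \eqref{controlled} of $F''$ (itself a consequence of \eqref{assf}$_3$ and \eqref{f'}) with $|H''(z)| \le c(n,q)\gamma_\varepsilon \ell_1(z)^{q-2}$.

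The delicate point is \eqref{cdq}$_4$, as the constant $\Lambda$ must be independent of $\gamma_\varepsilon$. My plan is to show the pointwise domination
\[
\max\{|F'(z)|,\,|H'(z)|\} \le |F'_\varepsilon(z)| + C(L,p,q),
\]
from which \eqref{cdq}$_4$ follows by summing the Legendre bound \eqref{assf}$_3$ for $F$ and the auxiliary estimate $|H''(z)| \le c(1 + |H'(z)|^{(q-2)/(q-1)})$. The auxiliary estimate is straightforward: for $|z| \le 1$ both sides are bounded by constants depending only on $q$, while for $|z|>1$ one has $|H''(z)| \lesssim \gamma_\varepsilon |z|^{q-2}$ and $|H'(z)| \sim \gamma_\varepsilon|z|^{q-1}$, giving $|H''(z)| \lesssim \gamma_\varepsilon^{1/(q-1)}|H'(z)|^{(q-2)/(q-1)} \le |H'(z)|^{(q-2)/(q-1)}$ thanks to $\gamma_\varepsilon \le 1$.

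To establish the key bound, I replace $F$ by the normalized integrand $\tilde F(z) := F(z) - F(0) - \langle F'(0), z\rangle$, which has the same Hessian and satisfies $\tilde F(0) = 0$, $\tilde F'(0) = 0$, so convexity yields $\tilde F(z) \ge 0$ and $\tilde F'(z) \cdot z \ge \tilde F(z) \ge 0$. Since $H'(z) = q\gamma_\varepsilon \ell_1(z)^{q-2}z$ is radial, decomposing $\tilde F'(z) = \alpha z + w$ with $w \perp z$ and $\alpha \ge 0$ produces
\[
|\tilde F'(z) + H'(z)|^2 = \bigl(\alpha + q\gamma_\varepsilon \ell_1(z)^{q-2}\bigr)^2 |z|^2 + |w|^2 \ge |\tilde F'(z)|^2 + |H'(z)|^2,
\]
so the sum $\tilde F'(z) + H'(z)$ dominates each summand in norm. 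Observing that $F'_\varepsilon(z) - F'(0) = \tilde F'(z) + H'(z)$ and that $|F'(0)|$ is bounded by a constant $c(L,p,q)$ via the growth bound \eqref{f'}, the displayed maximum bound follows. The main obstacle is precisely this last step: a naive triangle inequality would allow cancellation between $F'$ and $H'$, whereas the radial structure of $H'$ together with the convexity-induced inequality $\tilde F'(z)\cdot z \ge 0$ forces the two gradients to add constructively along $z$, preventing loss of coercivity in the passage $F \mapsto F_\varepsilon$.
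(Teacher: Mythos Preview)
Your proof is correct and rests on the same core observation as the paper: because $H'(z)=q\gamma_\varepsilon\ell_1(z)^{q-2}z$ is radial and the gradient of the convex part points outward along $z$, the two summands add constructively and $|F'_\varepsilon(z)|$ dominates each of $|F'(z)|$ and $|H'(z)|$ up to an additive constant. The implementation differs slightly. The paper works with $F$ directly, shows $\langle F'(z),z\rangle>0$ only for $|z|\ge\tilde c$ by invoking the monotonicity inequality $\eqref{keymono}_1$ from Corollary~\ref{keylemma}, and handles the bounded range $|z|\le\tilde c$ separately via $\eqref{cdq}_3$. Your normalization $\tilde F(z)=F(z)-F(0)-\langle F'(0),z\rangle$ yields $\tilde F'(z)\cdot z\ge\tilde F(z)\ge 0$ for \emph{all} $z$ from bare convexity, so no case split is needed and the argument becomes self-contained (no appeal to the duality machinery behind $\eqref{keymono}_1$). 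The paper's route, on the other hand, keeps the original $F$ throughout and makes the threshold $\tilde c$ explicit, which is occasionally convenient when one later needs quantitative control on the small-gradient regime.
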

\begin{proof}
The first three bounds in \eqref{cdq} are a straightforward consequence of \eqref{assf}, \eqref{f'} and the very definition of $F_{\varepsilon}$, so we focus on $\eqref{cdq}_{4}$.
We first prove that there exists a constant $\ti{c}\equiv \ti{c}(L,p,q)$ such that if $\snr{z}\ge \ti{c}$, then $\langle F'(z),z\rangle>0$.
In fact, by Young inequality with conjugate expoenents $(p,p')$ we have
\begin{eqnarray*}
\langle F'(z),z\rangle&=&\langle F'(z)-F'(0),z\rangle+\langle F'(0),z\rangle\nonumber \\
&\stackrel{\eqref{keymono}_{1},\eqref{f'}}{\ge}&c\snr{V_{\mu,p}(z)}^{2}-c\snr{z}\ge c'\snr{z}^{p}-c'',
\end{eqnarray*}
with $c',c''\equiv c',c''(L,p,q)$, thus letting $\ti{c}:=\max\{(2c''/c')^{1/p},1\}\equiv \ti{c}(L,p,q)\ge 1$ we get 
\eqn{cdq.1}
$$
\snr{z}\ge \ti{c} \ \Longrightarrow \ \langle F'(z),z\rangle\ge \min\left\{\frac{c'}{2},c''\right\}\snr{z}^{p}>0.
$$
Next, by $\eqref{cdq}_{3}$ we have 
\eqn{cdq.3}
$$
\snr{z}\le \ti{c}\ \Longrightarrow \ \snr{F_{\varepsilon}''(z)}\le c(n,N,L,p,q),
$$
while if $\snr{z}\ge \ti{c}$ we compute
\begin{eqnarray}\label{cdq.2}
\snr{F_{\varepsilon}'(z)}^{2}&=&\snr{F'(z)}^{2}+q^{2}\gamma_{\varepsilon}^{2}\ell_{1}(z)^{2(q-2)}\snr{z}^{2}+2q\gamma_{\varepsilon}\ell_{1}(z)^{q-2}\langle F'(z),z\rangle\nonumber \\
&\stackrel{\eqref{cdq.1}}{\ge}&\snr{F'(z)}^{2}+\gamma_{\varepsilon}^{2}q^{2}\ell_{1}(z)^{2(q-2)}\snr{z}^{2} \ \Longrightarrow \ \snr{F'(z)}+q\gamma_{\varepsilon}\ell_{1}(z)^{q-2}\snr{z}\le 2\snr{F_{\varepsilon}'(z)},
\end{eqnarray}
therefore
\begin{eqnarray}\label{cdq.4}
\snr{F_{\varepsilon}''(z)}&\stackrel{\eqref{assf}_{3}}{\le}&L\left(1+\snr{F'(z)}^{\frac{q-2}{q-1}}\right)+c\gamma_{\varepsilon}\ell_{1}(z)^{q-2}\nonumber \\
&\stackrel{\snr{z}\ge \ti{c},\ \gamma_{\varepsilon}<1}{\le}&L\left(1+\snr{F'(z)}^{\frac{q-2}{q-1}}\right)+c\left(\gamma_{\varepsilon}\ell_{1}(z)^{q-2}\snr{z}\right)^{\frac{q-2}{q-1}}
\stackrel{\eqref{cdq.2}}{\le}c\left(1+\snr{F_{\varepsilon}'(z)}^{\frac{q-2}{q-1}}\right),
\end{eqnarray}
for $c\equiv c(n,N,L,p,q)$. Combining \eqref{cdq.3} and \eqref{cdq.4} we obtain $\eqref{cdq}_{4}$ and the proof is complete.
\end{proof}
Thanks to \eqref{cdq}$_{1,2,3}$, by-now classical regularity theory \cite[Chapter 8]{giu} yields that 
\eqn{3.1.0}
$$
V_{1,q}(\nabla u_{\varepsilon}), \  \nabla u_{\varepsilon}\in \WW^{1,2}_{\loc}(B,\mathbb{R}^{N\times n})
$$
in particular, by \eqref{3.1.0} via difference quotients arguments, \eqref{els} can be differentiated: system
\eqn{elsd}
$$
0=\int_{B}\langle F''_{\varepsilon}(\nabla u_{\varepsilon})\partial_{s}\nabla u_{\varepsilon},\nabla w\rangle\dx
$$
holds for any $w\in \WW^{1,2}(B,\Y)$ such that $\supp(w)\Subset B$, and all $s\in \{1,\cdots,n\}$. Finally, we record for later use the elementary energy estimates
\begin{flalign}
L^{-1}\nr{\nabla u_{\varepsilon}}_{\LL^{p}(B)}^{p}+\gamma_{\varepsilon}\nr{\nabla u_{\varepsilon}}_{\LL^{q}(B)}^{q}\stackrel{\eqref{assf}_{2}}{\le}\F_{\e}(u_{\varepsilon};B)\stackrel{\eqref{approx}_{1}}{\le}
\F(u;B)+\texttt{o}(\varepsilon).\label{enes}
\end{flalign}

\subsection{Proof of Theorem \ref{hd}}\label{ptt}
We start by recording the uniform $L^{q'}$-bound on the stress tensors $\{F'(\nabla u_{\varepsilon})\}_{\varepsilon>0}$:
\begin{flalign}
  \nr{F'(\nabla u_{\varepsilon})}_{\LL^{q'}(B)}^{q'}\stackrel{\eqref{4.2.1}}{\le}c\F(u_{\varepsilon};B)+\texttt{o}(\varepsilon)
  +c\stackrel{\eqref{enes}}{\le}c\F(u;B)+\texttt{o}(\varepsilon)+c,\label{4.2.0}
\end{flalign}
for $c\equiv c(n,L,p,q)$. We fix a ball $B_{r}(x_{0})\subseteq B$, let $\rr\in (0,3r/4)$, introduce the following symbols:
\begin{flalign*}
&\texttt{H}_{\varepsilon}(x_{0};\rr):=\sum_{s=1}^{n}\int_{B_{\rr}(x_{0})}\langle F_{\varepsilon}''(\nabla u_{\varepsilon})\partial_{s}\nabla u_{\varepsilon},\partial_{s}\nabla u_{\varepsilon}\rangle\dx,
\qquad \texttt{S}_{\varepsilon}(x_{0};\rr):=\int_{B_{\rr}(x_{0})}\ell_{1}(F'(\nabla u_{\varepsilon}))^{q'}\dx\nonumber \\
&\texttt{P}_{\varepsilon}(x_{0};\rr):=\int_{B_{\rr}(x_{0})}\ell_{\mu}(\nabla u_{\varepsilon})^{p}\dx,\qquad \qquad \qquad\qquad \qquad \texttt{Q}_{\varepsilon}(x_{0};\rr)
:=\gamma_{\varepsilon}\int_{B_{\rr}(x_{0})}\ell_{1}(\nabla u_{\varepsilon})^{q}\dx\nonumber \\
&\texttt{V}_{\varepsilon,p}(x_{0};\rr):=\snr{V_{\mu,p}(\nabla u_{\varepsilon})-(V_{\mu,p}(\nabla u_{\varepsilon}))_{\partial B_{\rr}(x_{0})}},
\qquad \quad \ \ \texttt{V}_{\varepsilon,q}(x_{0};\rr):=\snr{V_{1,q}(\nabla u_{\varepsilon})-(V_{1,q}(\nabla u_{\varepsilon}))_{\partial B_{\rr}(x_{0})}},
\end{flalign*}
and notice that by \eqref{3.1.0} all the above quantities are finite. Before proceeding further, a brief remark about notation:
since all balls considered from now on will be centered at $x_{0}$, we shall omit denoting it and simply write $B_{\rr}(x_{0})\equiv B_{\rr}$.
With $0<\delta<\min\{1,\dist(B_{\rr},\partial B)/100\}$, we regularize the characteristic function of $B_{\rr}$ by convolution against a sequence
$\{\phi_{\delta}\}_{\delta>0}\subset \CC^{\infty}_{c}(\mathbb{R}^{n})$ of standard mollifiers , thus defining
$\{\eta_{\delta}\}_{\delta>0}:=\{\mathds{1}_{B_{\rr}}*\phi_{\delta}\}_{\delta>0}\subset \CC^{\infty}_{c}(B_{\rr+2\delta})$, test \eqref{elsd} against
$w_{\delta}:=\eta_{\delta}(\partial_{s}u_{\varepsilon}-(\partial_{s}u_{\varepsilon})_{\partial B_{\rr}})$, which is admissible by \eqref{3.1.0}, and sum on $s\in \{1,\cdots,n\}$.
We obtain:
\begin{flalign*}
&\sum_{s=1}^{n}\int_{B_{\rr+2\delta}}\eta_{\delta}\langle F_{\varepsilon}''(\nabla u_{\varepsilon})\partial_{s}\nabla u_{\varepsilon},\partial_{s}\nabla u_{\varepsilon}\rangle\dx\nonumber \\
  &\qquad \qquad =-\sum_{s=1}^{n}\int_{B_{\rr+2\delta}}\langle F_{\varepsilon}''(\nabla u_{\varepsilon})\partial_{s}\nabla u_{\varepsilon},
  (\partial_{s}u_{\varepsilon}-(\partial_{s}u_{\varepsilon})_{\partial B_{\rr}})\otimes \nabla \eta_{\delta}\rangle\dx.
\end{flalign*}
Recalling that $\eta_{\delta}\rightharpoonup^{*} \mathds{1}_{B_{\rr}}$ in $\mathrm{BV}$, we let $\delta\to 0$ in the above display to derive via Cauchy Schwarz inequality,
and H\"older inequality with conjugate exponents $\left(q/(q-p),q/p\right)$,
\begin{eqnarray}\label{4.12}
  \texttt{H}_{\varepsilon}(x_{0};\rr)&=&\sum_{s=1}^{n}\int_{\partial B_{\rr}}\langle F_{\varepsilon}''(\nabla u_{\varepsilon})\partial_{s}\nabla u_{\varepsilon},
  (\partial_{s}u_{\varepsilon}-(\partial_{s}u_{\varepsilon})_{\partial B_{\rr}})\otimes (x-x_{0})/\rr\rangle\d\mathcal{H}^{n-1}(x)\nonumber \\
  &\le&c\texttt{H}_{\varepsilon}'(x_{0};\rr)^{\frac{1}{2}}\left(\int_{\partial B_{\rr}}\snr{F_{\varepsilon}''(\nabla u_{\varepsilon})}\snr{\nabla u_{\varepsilon}
    -(\nabla u_{\varepsilon})_{\partial B_{\rr}}}^{2}\d\mathcal{H}^{n-1}(x)\right)^{\frac{1}{2}}\nonumber \\
  &\le&c\texttt{H}_{\varepsilon}'(x_{0};\rr)^{\frac{1}{2}}\left(\int_{\partial B_{\rr}}\frac{\snr{F''(\nabla u_{\varepsilon})}}{\ell_{\mu}(\nabla u_{\varepsilon})^{p-2}}
  \ell_{\mu}(\nabla u_{\varepsilon})^{p-2}\snr{\nabla u_{\varepsilon}-(\nabla u_{\varepsilon})_{\partial B_{\rr}}}^{2}\d\mathcal{H}^{n-1}(x)\right)^{\frac{1}{2}}\nonumber \\
  &&+c\gamma_{\varepsilon}^{\frac{1}{2}}\texttt{H}_{\varepsilon}'(x_{0};\rr)^{\frac{1}{2}}\left(\int_{\partial B_{\rr}}\ell_{1}(\nabla u_{\varepsilon})^{q-2}
  \snr{\nabla u_{\varepsilon}-(\nabla u_{\varepsilon})_{\partial B_{\rr}}}^{2}\d\mathcal{H}^{n-1}(x)\right)^{\frac{1}{2}}\nonumber \\
  &\stackrel{\eqref{assf.2}}{\le}&c\texttt{H}_{\varepsilon}'(x_{0};\rr)^{\frac{1}{2}}\left(\int_{\partial B_{\rr}}\left(1+\snr{F'(\nabla u_{\varepsilon})}^{\frac{q-p}{q-1}}\right)
  \ell_{\mu}(\nabla u_{\varepsilon})^{p-2}\snr{\nabla u_{\varepsilon}-(\nabla u_{\varepsilon})_{\partial B_{\rr}}}^{2}\d\mathcal{H}^{n-1}(x)\right)^{\frac{1}{2}}\nonumber \\
  &&+c\gamma_{\varepsilon}^{\frac{1}{2}}\texttt{H}_{\varepsilon}'(x_{0};\rr)^{\frac{1}{2}}\left(\int_{\partial B_{\rr}}\left(1+\snr{\nabla u_{\varepsilon}}^{2}
  +\snr{(\nabla u_{\varepsilon})_{\partial B_{\rr}}}^{2}\right)^{\frac{q-2}{2}}\snr{\nabla u_{\varepsilon}-(\nabla u_{\varepsilon})_{\partial B_{\rr}}}^{2}\d\mathcal{H}^{n-1}(x)\right)^{\frac{1}{2}}\nonumber \\
  &\stackrel{\eqref{Vm}_{1,3}}{\le}&c\texttt{H}_{\varepsilon}'(x_{0};\rr)^{\frac{1}{2}}\texttt{S}_{\varepsilon}'(x_{0};\rr)^{\frac{q-p}{2q}}
  \left(\int_{\partial B_{\rr}}\texttt{V}_{\varepsilon,p}^{\frac{2q}{p}}(x_{0};\rr)\d\mathcal{H}^{n-1}(x)\right)^{\frac{p}{2q}}\nonumber \\
&&+c\gamma_{\varepsilon}^{\frac{1}{2}}\texttt{H}_{\varepsilon}'(x_{0};\rr)^{\frac{1}{2}}\left(\int_{\partial B_{\rr}}\texttt{V}_{\varepsilon,q}^{2}(x_{0};\rr)\d\mathcal{H}^{n-1}(x)\right)^{\frac{1}{2}}\nonumber \\
  &\stackrel{\eqref{Vm}_{1}}{\le}&c\rr^{\frac{p(n-1)}{2q}}\texttt{H}_{\varepsilon}'(x_{0};\rr)^{\frac{1}{2}}\texttt{S}_{\varepsilon}'(x_{0};\rr)^{\frac{q-p}{2q}}
  \left(\mint_{\partial B_{\rr}}\texttt{V}_{\varepsilon,p}^{\frac{2q}{p}}(x_{0};\rr)\d\mathcal{H}^{n-1}(x)\right)^{\frac{p}{2q}}+c\texttt{H}_{\varepsilon}'(x_{0};\rr)^{\frac{1}{2}}
  \texttt{Q}_{\varepsilon}'(x_{0};\rr)^{\frac{1}{2}},
\end{eqnarray}
with $c\equiv c(n,N,L,L_{\mu},p,q)$. We then notice that, up to choosing $2^{*}_{n-1}$ so large that $2^{*}_{n-1}>2q/p$ when $n=2$ or $n=3$,
by \eqref{pq} it is $2q/p<2^{*}_{n-1}$, so we can write
\begin{flalign}\label{4}
\frac{2q}{p}=2\lambda+(1-\lambda)2^{*}_{n-1} \ \Longrightarrow \ \lambda=\frac{p2^{*}_{n-1}-2q}{p(2^{*}_{n-1}-2)},\qquad 1-\lambda=\frac{2(q-p)}{p(2^{*}_{n-1}-2)},
\end{flalign}
as to control via H\"older inequality with conjugate exponents $\left(1/\lambda,1/(1-\lambda)\right)$,
\begin{eqnarray}\label{4.13}
  \nra{\texttt{V}_{\varepsilon,p}(x_{0};\rr)}_{\LL^{\frac{2q}{p}}(\partial B_{\rr})}&\le& \nra{\texttt{V}_{\varepsilon,p}(x_{0};\rr)}_{\LL^{2}(\partial B_{\rr})}^{\frac{p\lambda}{q}}
  \nra{\texttt{V}_{\varepsilon,p}(x_{0};\rr)}_{\LL^{2^{*}_{n-1}}(\partial B_{\rr})}^{\frac{p(1-\lambda)2^{*}_{n-1}}{2q}}\nonumber \\
  &\stackrel{\eqref{spv}}{\le}&c\rr^{\frac{p(1-\lambda)2^{*}_{n-1}}{2q}}\nra{\texttt{V}_{\varepsilon,p}(x_{0};\rr)}_{\LL^{2}(\partial B_{\rr})}^{\frac{p\lambda}{q}}
  \nra{\nabla V_{\mu,p}(\nabla u_{\varepsilon})}_{\LL^{2}(\partial B_{\rr})}^{\frac{p(1-\lambda)2^{*}_{n-1}}{2q}}\nonumber \\
  &\stackrel{\eqref{keymono.1}}{\le}&c\rr^{\frac{p}{2q}\left(1-n+(1-\lambda)\left(n-1-2^{*}_{n-1}(n-3)/2\right)\right)}
  \texttt{P}_{\varepsilon}'(x_{0};\rr)^{\frac{p\lambda}{2q}}\texttt{H}_{\varepsilon}'(x_{0};\rr)^{\frac{p(1-\lambda)2^{*}_{n-1}}{4q}},
\end{eqnarray}
for $c\equiv c(n,N,L,p,q).$ Merging \eqref{4.12} and \eqref{4.13} we get
\begin{flalign}\label{4.14}
  \texttt{H}_{\varepsilon}(x_{0};\rr)\le c\rr^{\beta_{0}}\texttt{S}_{\varepsilon}'(x_{0};\rr)^{\frac{q-p}{2q}}\texttt{P}_{\varepsilon}'(x_{0};\rr)^{\frac{p\lambda}{2q}}
  \texttt{H}_{\varepsilon}'(x_{0};\rr)^{\frac{1}{2}+\frac{p(1-\lambda)2^{*}_{n-1}}{4q}}+c\texttt{H}_{\varepsilon}'(x_{0};\rr)^{\frac{1}{2}}\texttt{Q}_{\varepsilon}'(x_{0};\rr)^{\frac{1}{2}},
\end{flalign}
where we set
$$
\beta_{0}:=\frac{p(1-\lambda)}{2q}\left(n-1-\frac{2^{*}_{n-1}(n-3)}{2}\right),
$$
and it is $c\equiv c(n,N,L,L_{\mu},p,q)$. Next, we fix parameters $r/2\le \tau_{2}<\tau_{1}\le 3r/4<\dist(x_{0},\partial B)$, and, for
$\texttt{K}_{\varepsilon}\in \{\texttt{H}_{\varepsilon},\texttt{P}_{\varepsilon},\texttt{Q}_{\varepsilon},\texttt{S}_{\varepsilon}\}$, set 
\begin{flalign*}
  \mathcal{I}_{\texttt{K}_{\varepsilon}}:=
  \left\{\rr\in (\tau_{2},\tau_{1})\colon \texttt{K}_{\varepsilon}'(x_{0};\rr) \leq \frac{4}{\tau_{1}-\tau_{2}}\int_{\tau_{2}}^{\tau_{1}}\texttt{K}'_{\varepsilon}(x_{0};t)\dt\right\}.
\end{flalign*}
From the definition we get, since $\texttt{K}_{\varepsilon}(x_{0}; \cdot )$ is absolutely continuous and increasing, that
$\mathcal{L}^{1}\left((\tau_{2},\tau_{1})\setminus\mathcal{I}_{\texttt{K}_{\varepsilon}}\right) < (\tau_{1}-\tau_{2})/4$, and consequently
$\mathcal{L}^{1}\left(\mathcal{I}_{\texttt{H}_{\varepsilon}}\cap \mathcal{I}_{\texttt{P}_{\varepsilon}}\cap \mathcal{I}_{\texttt{Q}_{\varepsilon}}\cap \mathcal{I}_{\texttt{S}_{\varepsilon}}\right) >0$.
Therefore we can pick $\rr\in \mathcal{I}_{\texttt{H}_{\varepsilon}}\cap \mathcal{I}_{\texttt{P}_{\varepsilon}}\cap\mathcal{I}_{ \texttt{Q}_{\varepsilon}}\cap \mathcal{I}_{\texttt{S}_{\varepsilon}}$
for which also \eqref{4.14} holds. Hereby
\begin{flalign}\label{sc.4}
\texttt{H}_{\varepsilon}(x_{0};\rr)\le\frac{c\rr^{\beta_{0}}\texttt{S}_{\varepsilon}(x_{0};\tau_{1})^{\frac{q-p}{2q}}
  \texttt{P}_{\varepsilon}(x_{0};\tau_{1})^{\frac{p\lambda}{2q}}\texttt{H}_{\varepsilon}(x_{0};\tau_{1})^{\frac{1}{2}+\frac{p(1-\lambda)2^{*}_{n-1}}{4q}}}{(\tau_{1}-\tau_{2})^{\frac{3q-p}{2q}}}
+\frac{c\texttt{Q}_{\varepsilon}(x_{0};\tau_{1})^{\frac{1}{2}}\texttt{H}_{\varepsilon}(x_{0};\tau_{1})^{\frac{1}{2}}}{(\tau_{1}-\tau_{2})},
\end{flalign}
with $c\equiv c(n,N,L_{\mu},p,q)$. Now we observe that
$$
\eqref{4} \ \Longrightarrow \ \frac{1}{2}+\frac{p(1-\lambda)2^{*}_{n-1}}{4q}<1,
$$
so we can apply Young inequality with conjugate exponents $\left(\frac{2q}{2q-\lambda p},\frac{2q}{\lambda p}\right)$ and $(2,2)$ to have
\begin{eqnarray*}
\texttt{H}_{\varepsilon}(x_{0};\tau_{2})&\le&\texttt{H}_{\varepsilon}(x_{0};\rr)\nonumber \\
&\le&\frac{1}{4}\texttt{H}_{\varepsilon}(x_{0};\tau_{1})+\frac{c\rr^{\alpha_{0}}\texttt{S}_{\varepsilon}(x_{0};\tau_{1})^{\kappa_{2}}
  \texttt{P}_{\varepsilon}(x_{0};\tau_{1})}{(\tau_{1}-\tau_{2})^{\kappa_{1}}}+\frac{c\texttt{Q}_{\varepsilon}(x_{0};\tau_{1})}{(\tau_{1}-\tau_{2})^{2}}\nonumber \\
&\stackrel{\eqref{enes},\eqref{4.2.0}}{\le}&\frac{1}{4}\texttt{H}_{\varepsilon}(x_{0};\tau_{1})+\frac{c\rr^{\alpha_{0}}\left(\F(u_{\varepsilon};B)+
  1+\texttt{o}(\varepsilon)\right)^{\kappa_{2}+1}}{(\tau_{1}-\tau_{2})^{\kappa_{1}}}+\frac{c\texttt{Q}_{\varepsilon}(x_{0};\tau_{1})}{(\tau_{1}-\tau_{2})^{2}}
\end{eqnarray*}
where we set 
\begin{flalign}\label{exx}
\alpha_{0}:=\frac{2q\beta_{0}}{\lambda p},\qquad \qquad \kappa_{1}:=\frac{3q-p}{\lambda p},\qquad \qquad \kappa_{2}:=\frac{q-p}{\lambda p},
\end{flalign}
and it is $c\equiv c(n,N,L_{\mu},p,q)$. Lemma \ref{l5} then yields
\begin{eqnarray}\label{hdes.1}
\int_{B_{r/2}}\snr{\nabla V_{\mu,p}(\nabla u_{\varepsilon})}^{2}+\snr{\nabla V_{1,q'}(F'(\nabla u_{\varepsilon}))}^{2}\dx&\stackrel{\eqref{keymono.1}}{\le}&\texttt{H}_{\varepsilon}(x_{0};r/2)\nonumber \\
&\stackrel{\eqref{assf}_{1},\eqref{approx}_{1,2}}{\le}&cr^{\alpha_{0}-\kappa_{1}}\left(\F(u;B)+1+\texttt{o}(\varepsilon)\right)^{\kappa_{2}+1}+r^{-2}\texttt{o}(\varepsilon),
\end{eqnarray}
for $c\equiv c(n,N,L,L_{\mu},p,q)$. 
Finally, we use $\eqref{approx}_{2,3}$ and $\eqref{assf}_{1}$ to send $\varepsilon\to 0$ in \eqref{hdes.1}
, and fix $B_{r}(x_{0})\equiv B$ to conclude with \eqref{hdes}. A standard covering argument then yields that
$V_{\mu,p}(\nabla u), V_{1,q'}(F'(\nabla u))\in \WW^{1,2}_{\loc}(\Omega,\mathbb{R}^{N\times n})$, and \eqref{hdes.4}$_{1}$ is proven.
Now we only need to prove the validity of $\eqref{hdes.4}_{2}$. To this end, we assume $\mu>0$ in \eqref{assf}$_{3}$, observe that thanks to \eqref{hdes.1} and \eqref{vpvqn}, sequence $\{u_{\varepsilon}\}_{\varepsilon>0}$ of solutions to problem \eqref{pde} is now uniformly bounded also in $W^{2,2}(B_{r/2},\mathbb{R}^{N})$, and observe that by $\eqref{cdq}_{3}$ and \eqref{3.1.0} it is
\begin{eqnarray*}
  \LL^{1}_{\loc}(B)\ni\sum_{s=1}^{n}\langle\partial_{s} F'(\nabla u_{\varepsilon}),\partial_{s}\nabla u_{\varepsilon}\rangle&=&
  \sum_{s=1}^{n}\left\langle\frac{ F''(\nabla u_{\varepsilon})}{\ell_{\mu}(\nabla u_{\varepsilon})^{p-2}}
  \ell_{\mu}(\nabla u_{\varepsilon})^{\frac{p-2}{2}}\partial_{s}\nabla u_{\varepsilon},\ell_{\mu}(\nabla u_{\varepsilon})^{\frac{p-2}{2}}\partial_{s}\nabla u_{\varepsilon}\right\rangle\nonumber \\
&=:&\sum_{s=1}^{n}\mathcal{D}(G''(\nabla u_{\varepsilon}),V_{\varepsilon}^{s}),
\end{eqnarray*}
where we set $\bigodot(\mathbb{R}^{N\times n})\times \mathbb{R}^{N\times n}\ni (A,z)\mapsto \mathcal{D}(A,z):=\langle Az,z\rangle$,
$\mathbb{R}^{N\times n}\ni z\mapsto G''(z):=F''(z)\ell_{\mu}(z)^{2-p}$, and $V_{\varepsilon}^{s}:=\ell_{\mu}(\nabla u_{\varepsilon})^{\frac{p-2}{2}}\partial_{s}\nabla u_{\varepsilon}$. Function $\mathcal{D}$ is continuous by definition, and, if the first argument is restricted to positive semi-definite forms, it is also nonnegative.
Notice that by \eqref{assf}$_{1}$ and \eqref{approx}$_{3}$, it is $G''(\nabla u_{\varepsilon})\to F''(\nabla u)\ell_{\mu}(\nabla u)^{2-p}$ in $\mathcal{L}^{n}$-measure.
Moreover, observing that
$$
\partial_{s}V_{\mu,p}(\nabla u_{\varepsilon})-\left(\frac{p-2}{4}\right)\ell_{\mu}(\nabla u_{\varepsilon})^{\frac{p-6}{2}}\partial_{s}
\snr{\nabla u_{\varepsilon}}^{2}\nabla u_{\varepsilon}=\ell_{\mu}(\nabla u_{\varepsilon})^{\frac{p-2}{2}}\partial_{s}\nabla u_{\varepsilon},
$$
being $p\ge 2$, for each $s\in \{1,\cdots,n\}$ by \eqref{hdes.1} we have $V_{\varepsilon}^{s}\rightharpoonup \ell_{\mu}(\nabla u)^{\frac{p-2}{2}}\partial_{s}\nabla u$
weakly in $\LL^{2}(B_{r/2},\mathbb{R}^{N\times n})$. By \cite[Theorem 4.1 and Remark 4.2]{Rindler}, up to nonrelabelled subsequences, each of the sequences
$\{G''(\nabla u_{\varepsilon}),V_{\varepsilon}^{s}\}_{\varepsilon>0}$ generates a Young measure and, via \cite[Lemma 5.19]{Rindler}, it is
$\{G''(\nabla u_{\varepsilon}),V_{\varepsilon}^{s}\}_{\varepsilon>0}\stackrel{\mathbf{Y}}{\to}\{\nu_{x}^{s}\}$ where $\nu_{x}^{s}=\delta_{G''(\nabla u)}\otimes \kappa^{s}_{x}$,
and $\{\kappa^{s}_{x}\}$ is a Young measure generated by $\{V_{\varepsilon}^{s}\}_{\varepsilon>0}$. Therefore
\begin{flalign*}
\mint_{B_{r/2}}\int_{\mathbb{R}^{N\times n}}\snr{z}^{2}\d\kappa^{s}_{x}(z)\dx<\infty\qquad \mbox{and}\qquad \int_{\mathbb{R}^{N\times n}}z\d\kappa^{s}_{x}(z)=\ell_{\mu}(\nabla u)^{\frac{p-2}{2}}\partial_{s}\nabla u.
\end{flalign*}
By weak lower semicontinuity for Young measures \cite[Proposition 4.6]{Rindler}, we have
\begin{eqnarray}\label{ym.0}
\sum_{s=1}^{n}\mint_{B_{r/2}}\langle \nu^{s}_{x},\mathcal{D}(\cdot,\cdot)\rangle\dx&\le&\liminf_{\varepsilon\to 0}\sum_{s=1}^{n}\mint_{B_{r/2}}\mathcal{D}(G''(\nabla u_{\varepsilon}),V_{\varepsilon}^{s})\dx\nonumber \\
&\le&\sup_{\varepsilon>0}\sum_{s=1}^{n}\mint_{B_{r/2}}\langle \partial_{s}F'(\nabla u_{\varepsilon}),\partial_{s}\nabla u_{\varepsilon}\rangle\dx\nonumber \\
&\le&\sup_{\varepsilon>0}\sum_{s=1}^{n}\mint_{B_{r/2}}\langle F''_{\varepsilon}(\nabla u_{\varepsilon})\partial_{s}\nabla u_{\varepsilon},\partial_{s}\nabla u_{\varepsilon}\rangle\dx\stackrel{\eqref{hdes.1}}{<}\infty,
\end{eqnarray}
so we can apply Jensen inequality \cite[Lemma 5.11]{Rindler} to derive
\begin{eqnarray*}
\sum_{s=1}^{n}\mint_{B_{r/2}}\langle\partial_{s}F'(\nabla u),\partial_{s}\nabla u\rangle\dx&=&\sum_{s=1}^{n}\mint_{B_{r/2}}\langle F''(\nabla u)\partial_{s}\nabla u,\partial_{s}\nabla u\rangle\dx\nonumber \\
&=&\sum_{s=1}^{n}\mint_{B_{r/2}}\langle G''(\nabla u)\ell_{\mu}(\nabla u)^{\frac{p-2}{2}}\partial_{s}\nabla u,\ell_{\mu}(\nabla u)^{\frac{p-2}{2}}\partial_{s}\nabla u\rangle\dx\nonumber \\
&\le&\sum_{s=1}^{n}\mint_{B_{r/2}}\langle \nu_{x}^{s},\mathcal{D}(\cdot,\cdot)\rangle\dx\stackrel{\eqref{ym.0}}{<}\infty.
\end{eqnarray*}
A standard covering argument then gives $\eqref{hdes.4}_{2}$ and completes the proof.
\begin{remark}\label{rem}
\emph{Theorem \ref{hd} is valid} a fortiori \emph{if the integrand governing $\F$ is of the form $\mathbb{R}^{N\times n}\ni z\mapsto \ti{F}(z):=F(z)+a_{0}(1+\snr{z}^{2})^{q/2}$
with $F$ as in \eqref{assf} and some constant $a_{0}\ge 0$. In this case estimate \eqref{hdes} includes also the informations coming from the $q$-elliptic term, i.e.:}
\begin{flalign*}
\nra{\nabla V_{\mu,p}(\nabla u)}_{\LL^{2}(B/2)}+\nra{\nabla V_{1,q}(F'(\nabla u))}_{\LL^{2}(B/2)}+a_{0}^{\frac{1}{2}}\nra{\nabla V_{1,q}(\nabla u)}_{\LL^{2}(B/2)}\le c\nra{(\ti{F}(\nabla u)+1)}_{\LL^{1}(B)}^{\tx{b}},
\end{flalign*}
\emph{with $c\equiv c(n,N,L,L_{\mu},p,q)$.}
\end{remark}

\subsection{Proof of Corollary \ref{cor}}
The proof of statements (\emph{ii}.)-(\emph{iii}.) comes as a direct consequence of Sobolev-Morrey embedding theorem and Theorem \ref{hd}. Concerning the result in (\emph{i}.),
Theorem \ref{hd} and Sobolev embedding theorem yields that $\nabla u\in \LL^{\frac{np}{n-2}}_{\loc}(\Omega,\mathbb{R}^{N\times n})$ and, if $p>n-2$, it is $np/(n-2)>n$,
thus Sobolev-Morrey embedding theorem allows to complete the proof. 
\subsubsection{Proof of Theorem \ref{polyt}, \emph{(}i.\emph{)}}\label{poly11} Proposition \ref{poly} and Corollary \ref{poly.er} yield that Corollary \ref{cor} applies
to the class of convex polynomials in the statement of Theorem \ref{polyt}, and in three space dimensions $n=3$ the local H\"older continuity result in (\emph{i}.)
immediately follows with no upper restriction on the polynomial degree. Part (\emph{ii}.) of Theorem \ref{polyt} will be proven in Section \ref{iiii} below.

\section{Smoothness in two space dimensions}\label{2d}
In this section we offer two independent proofs of gradient boundedness in $2d$ for vector-valued minimizers of functional $\F$,
based on an inhomogeneous monotonicity formula, and on a renormalized Gehring - Giaquinta \& Modica type lemma. From this, we deduce full regularity in $2d$.
As already mentioned in Section \ref{tec}, our approach allows handling simultaneously degenerate and nondegenerate problems and entails new results already
in the genuine $(p,q)$-setting, see Remark \ref{rempq} below. For the sake of clarity, we shall divide the reminder of the section in three parts,
beginning by detailing the two arguments leading to gradient boundedness and concluding by proving the full regularity statement, leading to the proof of Theorem \ref{2dreg}.
\subsection{Proof of Theorem \ref{2dreg}: \texorpdfstring{$2d$}{}-gradient boundedness via monotonicity formula}\label{2dm}
We look back at Section \ref{ptt}, and exploit the low dimension - if $n=2$ we are allowed to choose $2^{*}_{1}$ arbitrarily large - to study the asymptotics of the exponents in \eqref{exx}. We have
\begin{flalign*}
&\alpha_{0}-\kappa_{1}=-\frac{2q}{p}\left(1-\frac{2q}{p2^{*}_{1}}\right)^{-1}+\frac{4(2q-p)}{p2^{*}_{1}-2q}\nearrow -\frac{2q}{p}\qquad \mbox{as} \ \ 2^{*}_{1}\to \infty,\nonumber \\
&\kappa_{2}+1=\frac{q}{p}\left(1-\frac{2q}{p2^{*}_{1}}\right)^{-1}-\frac{2(2q-p)}{p2^{*}_{1}-2q}\searrow \frac{q}{p}\qquad \mbox{as} \ \ 2^{*}_{1}\to \infty,
\end{flalign*}
so we can increase $2^{*}_{1}$ in such a way that $\alpha_{0}-\kappa_{1}\geq-4q/p$ and $\kappa_{2}+1\leq 2q/p$, and be more precise on estimate \eqref{hdes.1}, that now becomes
\eqn{hdes.5}
$$
\nr{\nabla V_{\mu,p}(\nabla u_{\varepsilon})}_{\LL^{2}(B/2)}^{2}+\nr{\nabla V_{1,q'}(F'(\nabla u_{\varepsilon}))}_{\LL^{2}(B/2)}^{2}\le c\tx{H}_{\varepsilon}(x_{B};\rrr/2)\le c\nra{F(\nabla u)+1}_{\LL^{1}(B)}^{\frac{2q}{p}},
$$
for $c\equiv c(N,L,L_{\mu},p,q)$. By \eqref{4.2.0}, \eqref{hdes.5}, and Trudinger inequality \cite[Theorem 2]{tr},
there exists a parameter $\tx{d}\approx \nra{F(\nabla u)+1}_{\LL^{1}(B)}^{-2q/p}$, with constants implicit in "$\approx$" depending on $(N,L,L_{\mu},p,q)$ such that
\eqn{mt}
$$
\mint_{B/2}\exp\left\{\tx{d}\snr{V_{1,q'}(F'(\nabla u_{\varepsilon}))}^{2}\right\}\dx\lesssim 1,
$$
again up to constants depending on $(N,L,L_{\mu},p,q)$ - in fact, an inspection of the proof of \cite[Theorem 2]{tr} shows that thanks to
\eqref{hdes.5} and \eqref{4.2.0} the constant $\tx{d}$, in principle proportional to the quantity
${\nr{\nabla V_{1,q'}(F'(\nabla u_{\varepsilon}))}_{\LL^{2}(B/2)}^{2}+\nra{V_{1,q'}(F'(\nabla u_{\varepsilon}))}_{\LL^{2}(B/2)}^{2}}$ can be suitably enlarged
to replace any dependency on $\varepsilon$ with a dependency on $(N,L,L_{\mu},p,q)$. In particular, whenever $B_{\rr}\subseteq B/2$ we have by Jensen's inequality,
\begin{eqnarray}\label{log}
\nra{V_{1,q'}(F'(\nabla u_{\varepsilon}))}_{\LL^{2}(B_{\rr})}^{2}&\le&\tx{d}^{-1}\log\left(\mint_{B_{\rr}}\exp\left\{\tx{d}\snr{V_{1,q'}(F'(\nabla u_{\varepsilon}))}^{2}\right\}\dx\right)\nonumber \\
&\le&c\tx{d}^{-1}\log\left(\left(\frac{\rrr}{\rr}\right)^{2}\mint_{B/2}\exp\left\{\tx{d}\snr{V_{1,q'}(F'(\nabla u_{\varepsilon}))}^{2}\right\}\dx\right)\nonumber \\
&\stackrel{\eqref{mt}}{\le}&c\log\left(\frac{\rrr}{\rr}\right)\nra{F(\nabla u)+1}_{\LL^{1}(B)}^{\frac{2q}{p}},
\end{eqnarray}
for $c\equiv c(N,L,L_{\mu},p,q)$. Next, taking $x_{0}\in B/4$, and $\rr\in (0,\rrr/6)$, we fix a ball $B_{\rr}(x_{0})\Subset B/2$ and refine estimate \eqref{4.12} as 
\begin{eqnarray}\label{log.1}
  \tx{H}_{\varepsilon}(x_{0};\rr)&\stackrel{\eqref{spv}}{\le}&c\rr\tx{H}_{\varepsilon}'(x_{0};\rr)^{\frac{1}{2}}
  \left(\mint_{\partial B_{\rr}}\ell_{1}(F'(\nabla u_{\varepsilon}))^{q'}\d\mathcal{H}^{1}(x)\right)^{\frac{q-p}{2q}}
  \left(\int_{\partial B_{\rr}}\snr{\nabla V_{\mu,p}(\nabla u_{\varepsilon})}^{2}\d\mathcal{H}^{1}(x)\right)^{\frac{1}{2}}\nonumber \\
&&+c\rr\tx{H}_{\varepsilon}'(x_{0};\rr)^{\frac{1}{2}}\left(\int_{\partial B_{\rr}}\gamma_{\varepsilon}\snr{\nabla V_{1,q}(\nabla u_{\varepsilon})}^{2}\d\mathcal{H}^{1}(x)\right)^{\frac{1}{2}}\nonumber \\
  &\stackrel{\eqref{keymono.1}}{\le}&c\rr \tx{H}_{\varepsilon}'(x_{0};\rr)
  \left(1+\nra{\ell_{1}(F'(\nabla u_{\varepsilon}))}_{\LL^{q'}(\partial B_{\rr})}^{\frac{q-p}{2(q-1)}}\right)\stackrel{\eqref{Vm}_{1}}{\le}
  c\rr \tx{H}_{\varepsilon}'(x_{0};\rr)\left(1+\nra{V_{1,q'}(F'(\nabla u_{\varepsilon}))}_{\LL^{2}(\partial B_{\rr})}^{\frac{q-p}{q}}\right),
\end{eqnarray}
where we also used H\"older inequality with conjugate exponents $\left(q/(q-p),q/p\right)$, and it is $c\equiv c(N,L,L_{\mu},p,q)$. By the trace theorem \cite[Section 2.3]{gk}, we control
\begin{eqnarray*}
\nra{V_{1,q'}(F'(\nabla u_{\varepsilon}))}_{\LL^{2}(\partial B_{\rr})}&\le&c\nra{V_{1,q}(F'(\nabla u_{\varepsilon}))}_{\LL^{2}(B_{\rr})}+c\nr{\nabla V_{1,q'}(F'(\nabla u_{\varepsilon}))}_{\LL^{2}(B_{\rr})}\nonumber \\
&\stackrel{\eqref{hdes.5}}{\le}&c\nra{V_{1,q}(F'(\nabla u_{\varepsilon}))}_{\LL^{2}(B_{\rr})}+c\nra{F(\nabla u)+1}_{\LL^{1}(B)}^{\frac{q}{p}}\nonumber \\
&\stackrel{\eqref{log}}{\le}&c\log\left(\frac{\rrr}{\rr}\right)^{\frac{1}{2}}\nra{F(\nabla u)+1}_{\LL^{1}(B)}^{\frac{q}{p}},
\end{eqnarray*}
for $c\equiv c(N,L,L_{\mu},p,q)$, therefore we can update \eqref{log.1} to
\begin{flalign}\label{log.2}
\tx{H}_{\varepsilon}(x_{0};\rr)\le c\rr\log\left(\frac{\rrr}{\rr}\right)^{\frac{q-p}{2q}}\nra{F(\nabla u)+1}_{\LL^{1}(B)}^{\frac{q-p}{p}}\tx{H}_{\varepsilon}'(x_{0};\rr),
\end{flalign}
with $c\equiv c(N,L,L_{\mu},p,q)$. We then let $\tx{B}:=c\nra{F(\nabla u)+1}_{\LL^{1}(B)}^{\frac{q-p}{p}}$, pick any $\sigma\in (0,\rrr/6^{4})$,
and integrate the inequality in \eqref{log.2} on $\rr\in (\sigma,\rrr/6)$ to derive
\begin{eqnarray*}
  \frac{q}{(q+p)\tx{B}}\log\left(\frac{\rrr}{\sigma}\right)^{\frac{q+p}{2q}}&\le&
  \frac{2q}{(q+p)\tx{B}}\left(\log\left(\frac{\rrr}{\sigma}\right)^{\frac{q+p}{2q}}-\log(6)^{\frac{q+p}{2q}}\right)\nonumber \\
  &\le&\tx{B}^{-1}\int_{\sigma}^{\rrr/6}\frac{1}{\rr\log\left(\frac{\rrr}{\rr}\right)^{\frac{q-p}{2q}}}\d\rr\stackrel{\eqref{log.2}}{\le}
  \log\left(\frac{\tx{H}_{\varepsilon}(x_{0};\rrr/6)}{\tx{H}_{\varepsilon}(x_{0};\sigma)}\right).
\end{eqnarray*}
Next, we set $\beta:=\left(q\tx{B}^{-1}/(q+p)\right)^{2q/(q+p)}$, $\gamma:=(q+p)/(2q)$, and pass to the exponentials in the previous display to recover
\begin{eqnarray*}
  c\nr{\nabla V_{\mu,p}(\nabla u_{\varepsilon})}_{\LL^{2}(B_{\sigma})}^{2}+c\nr{\nabla V_{1,q'}(F'(\nabla u_{\varepsilon}))}_{\LL^{2}(B_{\sigma})}^{2}&\stackrel{\eqref{keymono.1}}{\le}&
  \tx{H}_{\varepsilon}(x_{0};\sigma)\nonumber \\
&\le&\exp\left\{-\left(\beta\log\left(\rrr/\sigma\right)\right)^{\gamma}\right\} \tx{H}_{\varepsilon}(x_{0};\rrr/6)\nonumber \\
&\le&\exp\left\{-\left(\beta\log\left(\rrr/\sigma\right)\right)^{\gamma}\right\} \tx{H}_{\varepsilon}(x_{B};\rrr/2)\nonumber \\
&\stackrel{\eqref{hdes.5}}{\le}&c\exp\left\{-\left(\beta\log\left(\rrr/\sigma\right)\right)^{\gamma}\right\}\nra{F(\nabla u)+1}_{\LL^{1}(B)}^{\frac{2q}{p}},
\end{eqnarray*}
for $c\equiv c(N,L,L_{\mu},p,q)$. We then let $\varepsilon\to 0$ above and use \eqref{approx}$_{3}$ to secure
\begin{eqnarray}\label{log.3}
  \nr{\nabla V_{\mu,p}(\nabla u)}_{\LL^{2}(B_{\sigma})}^{2}+\nr{\nabla V_{1,q'}(F'(\nabla u))}_{\LL^{2}(B_{\sigma})}^{2}&\le&
  c\exp\left\{-\left(\beta\log\left(\rrr/\sigma\right)\right)^{\gamma}\right\}\nra{F(\nabla u)+1}_{\LL^{1}(B)}^{\frac{2q}{p}}\nonumber \\
&\le&c\log\left(\frac{\rrr}{\sigma}\right)^{-(\gamma+2)}\nra{F(\nabla u)+1}_{\LL^{1}(B)}^{\tx{b}},
\end{eqnarray}
where we used that $(\beta\log(\rrr/\sigma))^{\gamma+2}\exp\left\{-(\beta\log(\rrr/\sigma))^{\gamma}\right\}\to 0$ as $\sigma\to 0$, and it is
$\tx{b}:=(\gamma+2)\left(\frac{q-p}{p}\right)+\frac{2q}{p}$, $c\equiv c(N,L,L_{\mu},p,q)$. Since \eqref{log.3} holds true for all balls
$B_{\sigma}(x_{0})\Subset B/2$, $x_{0}\in B/4$, from a variant of Morrey lemma \cite[Lemma 1.1]{fr1}, see also \cite[page 287]{fr},
we deduce that $V_{\mu,p}(\nabla u)$, $V_{1,q'}(F'(\nabla u))$ are continuous on $B/8$ and therefore bounded, and a standard covering argument
eventually yields that $F'(\nabla u),\ \nabla u\in \LL^{\infty}_{\loc}(\Omega,\mathbb{R}^{N\times 2})$. 

\subsection{Proof of Theorem \ref{2dreg}: \texorpdfstring{$2d$}{}-gradient boundedness via renormalized Gehring - Giaquinta \& Modica lemma} \label{2dg}
Our first move is a quantitative version of classical Gehring lemma \cite{fwg}, after Giaquinta \& Modica \cite{gm1}.
\begin{lemma}\label{lglg}
Assume that $\varphi$ is a nonnegative, decreasing function on $[t_{0},\infty)$, infinitesimal as $t\to \infty$, and satisfying
\eqn{6.2.3}
$$
-\int_{t}^{\infty}s^{1-\mf{m}}\d\varphi(s)\le c_{0}t^{1-\mf{m}}\tx{M}\varphi(t),
$$
for some absolute constants $c_{0}\ge 1$, $\tx{M}\ge 1$, $\tx{m}\in (0,1)$ and all $t\ge t_{0}$. There exists a positive number 
\eqn{rrr}
$$
\mf{t}:=\frac{2c_{0}\tx{M}-\mf{m}}{2c_{0}\tx{M}-1}\in (1,2)\qquad \mbox{verifying} \ \ \mf{t}\searrow 1 \ \ \mbox{as} \ \ \tx{M}\to \infty,
$$
such that
\eqn{6.2.7}
$$
-\int_{t_{0}}^{\infty}s^{\mf{t}-\mf{m}}\d\varphi(s)\le -2t_{0}^{\mf{t}-1}\int_{t_{0}}^{\infty}s^{1-\mf{m}}\d\varphi(s).
$$
\end{lemma}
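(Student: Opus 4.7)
The plan is to represent the left-hand side of \eqref{6.2.7} as a weighted integral of the cumulative tail $G(\tau):=-\int_\tau^\infty s^{1-\mf{m}}\d\varphi(s)$ and to apply the hypothesis \eqref{6.2.3} pointwise in $\tau$. Starting from the elementary identity $s^{\mf{t}-1}=t_0^{\mf{t}-1}+(\mf{t}-1)\int_{t_0}^s\tau^{\mf{t}-2}\d\tau$ valid for $s\ge t_0$, I would substitute into $-\int_{t_0}^\infty s^{\mf{t}-\mf{m}}\d\varphi(s)$ and apply Fubini to the nonnegative measure $-\d\varphi$ to obtain the decomposition
\begin{equation*}
-\int_{t_0}^{\infty}s^{\mf{t}-\mf{m}}\d\varphi(s)=t_0^{\mf{t}-1}G(t_0)+(\mf{t}-1)\int_{t_0}^{\infty}\tau^{\mf{t}-2}G(\tau)\d\tau.
\end{equation*}
Using hypothesis \eqref{6.2.3} at $t=\tau$ then yields $G(\tau)\le c_0\tx{M}\tau^{1-\mf{m}}\varphi(\tau)$, and consequently the second summand above is bounded by $c_0\tx{M}(\mf{t}-1)\int_{t_0}^\infty\tau^{\mf{t}-\mf{m}-1}\varphi(\tau)\d\tau$.

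Next, I would re-express this $\varphi$-integral in terms of the left-hand side itself. Writing $\varphi(\tau)=-\int_\tau^\infty\d\varphi(s)$ (legitimate because $\varphi(\infty)=0$) and a further Fubini swap produce
\begin{equation*}
\int_{t_0}^{\infty}\tau^{\mf{t}-\mf{m}-1}\varphi(\tau)\d\tau=\frac{1}{\mf{t}-\mf{m}}\left[-\int_{t_0}^{\infty}s^{\mf{t}-\mf{m}}\d\varphi(s)-t_0^{\mf{t}-\mf{m}}\varphi(t_0)\right].
\end{equation*}
Chaining these estimates gives an inequality in which the left-hand side of \eqref{6.2.7} reappears on the right with coefficient $c_0\tx{M}(\mf{t}-1)/(\mf{t}-\mf{m})$; an elementary computation shows that the choice of $\mf{t}$ in \eqref{rrr} is tuned precisely so that this coefficient equals $1/2$. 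Absorbing the offending term into the left-hand side produces the factor $2$ appearing in \eqref{6.2.7}, while the residual contribution $-\tfrac{1}{2}t_0^{\mf{t}-\mf{m}}\varphi(t_0)$ is nonpositive and may simply be dropped.

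The main technical obstacle I anticipate is the a priori finiteness of $-\int_{t_0}^\infty s^{\mf{t}-\mf{m}}\d\varphi(s)$, which is needed before the absorption step is legitimate. The cleanest fix is to run the whole calculation on a truncated interval $[t_0,T]$ in place of $[t_0,\infty)$, where every quantity is trivially finite since $\varphi$ is bounded and $\mf{t}-\mf{m}>0$, and then send $T\to\infty$: the uniform bound furnished by the absorption argument simultaneously upgrades to finiteness of the limit and to \eqref{6.2.7}. Finally, the claim $\mf{t}\in(1,2)$ with $\mf{t}\searrow 1$ as $\tx{M}\to\infty$ is immediate from the explicit formula, since $\mf{t}-1=(1-\mf{m})/(2c_0\tx{M}-1)$ and $\mf{m}\in(0,1)$, $c_0,\tx{M}\ge 1$.
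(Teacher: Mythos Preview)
Your argument is essentially the paper's: the Fubini decomposition you write is precisely the integration by parts the paper performs on $\mathcal{I}_{\mf{t}-\mf{m}}$, and your second Fubini reproduces the paper's identity $\int s^{\mf{t}-\mf{m}-1}\varphi\,\d s=(\mf{t}-\mf{m})^{-1}(\mathcal{I}_{\mf{t}-\mf{m}}-t_0^{\mf{t}-\mf{m}}\varphi(t_0))$. The choice of $\mf{t}$ and the absorption are identical.

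There is, however, a small gap in your truncation step. If you simply restrict all integrals to $[t_0,T]$, the identity $\varphi(\tau)=-\int_\tau^\infty\d\varphi$ must be replaced by $\varphi(\tau)=\varphi(T)-\int_\tau^T\d\varphi$, and tracking the extra boundary contribution through the absorption yields
\[
-\int_{t_0}^{T}s^{\mf{t}-\mf{m}}\d\varphi(s)\le 2t_0^{\mf{t}-1}G(t_0)+T^{\mf{t}-\mf{m}}\varphi(T).
\]
The hypotheses only give $T^{1-\mf{m}}\varphi(T)\le G(T)\to 0$, which is too weak to control $T^{\mf{t}-\mf{m}}\varphi(T)$ since $\mf{t}>1$; so the bound is not uniform in $T$ and you cannot pass to the limit to deduce finiteness. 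The paper avoids this by truncating $\varphi$ rather than the domain: setting $\varphi_\kappa:=\varphi\cdot\mathds{1}_{[t_0,\kappa]}$ (with a jump to zero at $\kappa$), one checks via the elementary inequality $\kappa^{1-\mf{m}}\varphi(\kappa)\le -\int_\kappa^\infty s^{1-\mf{m}}\d\varphi(s)$ that $\varphi_\kappa$ still satisfies \eqref{6.2.3} with the same constants, so your computation applies verbatim to $\varphi_\kappa$ with no boundary term, and monotone convergence as $\kappa\to\infty$ finishes. This is a minor fix; the core of your argument is correct.
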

\begin{proof}
We assume for the moment that there is $\kappa\ge 1$ such that $\varphi(t)=0$ whenever $t\ge \kappa$. We will remove this restriction in the end. With $\tx{d}>0$ we set
\begin{flalign*}
\mathcal{I}_{\tx{d}}(t):=-\int_{t}^{\kappa}s^{\tx{d}}\d\varphi(s)\qquad \mbox{and}\qquad \mathcal{I}_{\tx{d}}:=\mathcal{I}_{\tx{d}}(t_{0}).
\end{flalign*}
For some $\mf{t}>1>\mf{m}$ to be determined, integrating by parts we have
\begin{eqnarray*}
\mathcal{I}_{\mf{t}-\mf{m}}&=&-\int_{t_{0}}^{\kappa}s^{\mf{t}-1}s^{1-\mf{m}}\d\varphi(s)=-\int_{t_{0}}^{\kappa}s^{\mf{t}-1}\d\mathcal{I}_{1-\mf{m}}(s)\nonumber \\
&=&t_{0}^{\mf{t}-1}\mathcal{I}_{1-\mf{m}}+(\mf{t}-1)\int_{t_{0}}^{\kappa}s^{\mf{t}-2}\mathcal{I}_{1-\mf{m}}(s)\ds\nonumber \\
&\stackrel{\eqref{6.2.3}}{\le}&t_{0}^{\mf{t}-1}\mathcal{I}_{1-\mf{m}}+c_{0}(\mf{t}-1)\tx{M}\int_{t_{0}}^{\kappa}s^{\mf{t}-1-\mf{m}}\varphi(s)\ds\nonumber \\
&=&t_{0}^{\mf{t}-1}\mathcal{I}_{1-\mf{m}}+\frac{c_{0}(\mf{t}-1)\tx{M}}{\mf{t}-\mf{m}}\left(\mathcal{I}_{\mf{t}-\mf{m}}-t_{0}^{\mf{t}-\mf{m}}\varphi(t_{0})\right)\le
t_{0}^{\mf{t}-1}\mathcal{I}_{1-\mf{m}}+\frac{c_{0}(\mf{t}-1)\tx{M}\mathcal{I}_{\mf{t}-\mf{m}}}{(\mf{t}-\mf{m})}.
\end{eqnarray*}
We now fix $\mf{t}>1$ so close to one that
$$
\frac{c_{0}(\mf{t}-1)\tx{M}}{\mf{t}-\mf{m}}=\frac{1}{2} \ \Longleftrightarrow \ \mf{t}=\frac{2c_{0}\tx{M}-\mf{m}}{2c_{0}\tx{M}-1}\stackrel{\tx{m}<1}{>}1,
$$
and \eqref{rrr} is satisfied. The above choice of $\mf{t}$ allows us to conclude that
\begin{flalign}
\mathcal{I}_{\mf{t}-\mf{m}}\le 2t_{0}^{\mf{t}-1}\mathcal{I}_{1-\mf{m}} \ \stackrel{\varphi(t)\equiv 0 \ \ \tiny{\mbox{if}} \ \ t\ge \kappa}{\Longrightarrow} \ \eqref{6.2.7},\label{6.2.5}
\end{flalign}
and we are done in the case $\varphi(t)\equiv 0$ for $t\ge \kappa$. Let us take care of the general case. Observe that being $\varphi$ nonincreasing on $[t_{0},\infty)$, we have for any given $T\ge\kappa$:
$$
-\int_{\kappa}^{T}s^{1-\mf{m}}\d\varphi(s)\ge -\kappa^{1-\mf{m}}\int_{\kappa}^{T}\d\varphi(s)=\kappa^{1-\mf{m}}\left(\varphi(\kappa)-\varphi(T)\right),
$$
so letting $T\to \infty$ above we get
\eqn{6.2.4}
$$
-\int_{\kappa}^{\infty}s^{1-\mf{m}}\d\varphi(s)\stackrel{\varphi(T)\to 0}{\ge}  \kappa^{1-\mf{m}}\varphi(\kappa).
$$
Next, we set $\varphi_{\kappa}(t):=\varphi(t)$ if $t\le \kappa$, and $\varphi_{\kappa}(t):=0$ if $t>\kappa$. For $t\le \kappa$ we have
\begin{eqnarray}\label{6.2.6}
-\int_{t}^{\infty}s^{1-\mf{m}}\d\varphi_{\kappa}(s)&=&-\int_{t}^{\kappa}s^{1-\mf{m}}\d\varphi(s)-\int_{\kappa}^{\infty}s^{1-\mf{m}}\d\varphi_{\kappa}(s)\nonumber \\
&\le&-\int_{t}^{\kappa}s^{1-\mf{m}}\d\varphi(s)+\kappa^{1-\mf{m}}\varphi(\kappa)\nonumber \\
&\stackrel{\eqref{6.2.4}}{\le}&-\int_{t}^{\kappa}s^{1-\mf{m}}\d\varphi(s)-\int_{\kappa}^{\infty}s^{1-\mf{m}}\d\varphi(s)
=-\int_{t}^{\infty}s^{1-\mf{m}}\d\varphi(s),
\end{eqnarray}
while if $t>\kappa$ the above relation is trivially true, given that $\varphi_{\kappa}(t)\equiv 0$ if $t>\kappa$. Thanks to what we just proved, we have, 
\begin{eqnarray}
-\int_{t_{0}}^{\infty}s^{\mf{t}-\mf{m}}\d\varphi_{\kappa}(s)&=&-\int_{t_{0}}^{\kappa}s^{\mf{t}-\mf{m}}\d\varphi_{\kappa}(s)\stackrel{\eqref{6.2.5}}{\le}-2t_{0}^{\mf{t}-1}\int_{t_{0}}^{\kappa}s^{1-\mf{m}}\d\varphi_{\kappa}(s)\nonumber \\
&\le&-2t_{0}^{\mf{t}-1}\int_{t_{0}}^{\infty}s^{1-\mf{m}}\d\varphi_{\kappa}(s)\stackrel{\eqref{6.2.6}}{\le}-2t_{0}^{\mf{t}-1}\int_{t_{0}}^{\infty}s^{1-\mf{m}}\d\varphi(s).
\end{eqnarray}
The conclusion now follows sending $\kappa\to \infty$.
\end{proof}
The previous lemma is instrumental to establish a self-improving property for functions satisfying suitable reverse H\"older inequalities.
\begin{lemma}\label{revh}
Let $Q_{1}(0)\subset \mathbb{R}^{n}$ be the unitary cube centered at the origin and $v\in \LL^{1}(Q_{1}(0))$ be a nonnegative function verifying
\eqn{6.2.1}
$$
\mint_{Q_{\rr/2}(x_{0})}v\dx\le \hat{c}\tx{M}\left(\mint_{Q_{\rr}(x_{0})}v^{\tx{m}}\dx\right)^{1/\tx{m}},
$$
for all cubes $Q_{\rr}(x_{0})\Subset Q_{1}(0)$, some exponent $\tx{m}\in (0,1)$, and absolute constants $\hat{c},\tx{M}\ge 1$. There exists a constant $c_{*}\equiv c_{*}(n,\hat{c})\ge 1$, and a positive number $\tx{t}\in (1,2)$ as in \eqref{rrr}, i.e.:
\eqn{rrr.1}
$$
\tx{t}=\frac{2c_{*}\tx{M}-\tx{m}}{2c_{*}\tx{M}-1}\in (1,2)\qquad \mbox{satisfying} \ \ \tx{t}\searrow 1 \ \ \mbox{as} \ \ \tx{M}\to \infty,
$$
such that
$$
\left(\mint_{Q_{1/2}(0)}v^{\tx{t}}\dx\right)^{1/\tx{t}}\le 2^{4n+4}\mint_{Q_{1}(0)}v\dx.
$$
In particular, for any fixed threshold $\tx{s}_{0}\ge 1$, the constant $c_{*}$ in \eqref{rrr.1} can be taken so large that
\eqn{*}
$$c_{*}>\tx{s}_{0},$$
up to update dependency $c_{*}\equiv c_{*}(n,\hat{c},\tx{s}_{0})$.
\end{lemma}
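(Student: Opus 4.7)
My plan is to carry out a Gehring--Giaquinta--Modica self‐improvement argument, handing off the final integration step to Lemma \ref{lglg}. Set $t_{0} := \mint_{Q_{1}(0)} v\,\dx$. For every $t \geq 2^{n} t_{0}$, I would run a dyadic Calder\'on--Zygmund stopping‐time decomposition of $v$ inside $Q_{1}(0)$ at level $t$, producing pairwise disjoint dyadic subcubes $\{Q_{i}\}$ with $t < \mint_{Q_{i}} v \leq 2^{n} t$ and $v \leq t$ a.e.\ on $Q_{1}(0)\setminus\bigcup_{i} Q_{i}$. Restricting attention to $Q_{3/4}(0)$ so that each concentric double $2Q_{i}$ still lies in $Q_{1}(0)$, the hypothesis \eqref{6.2.1} applies on the pair $(Q_{i},2Q_{i})$ and gives $\mint_{Q_{i}} v \leq \hat{c}M\bigl(\mint_{2Q_{i}} v^{m}\bigr)^{1/m}$. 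Splitting $\int_{2Q_{i}} v^{m}$ at level $t/\alpha$ with $\alpha := 2^{1/m}\hat{c}M$, using the subadditivity $(a+b)^{1/m} \leq 2^{1/m-1}(a^{1/m}+b^{1/m})$ to absorb the low‐value piece into half of the left‐hand side, and summing over $i$ with the bounded overlap of $\{2Q_{i}\}$, I arrive at the Gehring‐type level‐set bound
\begin{equation}\label{plan:gehring}
\int_{\{v>t\}\cap Q_{1/2}(0)} v\,\dx \;\leq\; c(n,\hat{c})\,M^{m}\,t^{1-m}\!\int_{\{v>t/\alpha\}\cap Q_{1}(0)} v^{m}\,\dx,\qquad t \geq 2^{n} t_{0}.
\end{equation}

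Next, I would set $\varphi(t) := \int_{\{v>t\}\cap Q_{1/2}(0)} v^{m}\,\dx$, which is nonnegative, nonincreasing, and infinitesimal at infinity. Standard layer‐cake identities furnish
$$-\int_{t}^{\infty} s^{1-m}\,d\varphi(s) = \int_{\{v>t\}\cap Q_{1/2}} v\,\dx,\qquad -\int_{t_{0}}^{\infty} s^{\mf{t}-m}\,d\varphi(s) = \int_{\{v>t_{0}\}\cap Q_{1/2}} v^{\mf{t}}\,\dx,$$
so that \eqref{plan:gehring} rewrites as $-\int_{t}^{\infty} s^{1-m}\,d\varphi(s) \leq c(n,\hat{c}) M^{m} t^{1-m}\varphi(t/\alpha)$. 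A short manipulation using $\alpha \lesssim M$ to absorb the shift $t/\alpha\mapsto t$ into an extra linear factor of $M$ then shows that $\varphi$ satisfies the differential inequality \eqref{6.2.3} of Lemma \ref{lglg} with a constant of the form $c_{0} M$, where $c_{0}\equiv c_{0}(n,\hat{c})$. Invoking Lemma \ref{lglg} produces the exponent $\mf{t}$ as in \eqref{rrr.1} together with conclusion \eqref{6.2.7}, which via the second layer‐cake identity reads
$$\int_{\{v>t_{0}\}\cap Q_{1/2}} v^{\mf{t}}\,\dx \;\leq\; 2\,t_{0}^{\mf{t}-1}\int_{\{v>t_{0}\}\cap Q_{1/2}} v\,\dx.$$

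To conclude, I would add the trivial low‐value contribution $\int_{\{v\leq t_{0}\}\cap Q_{1/2}} v^{\mf{t}}\,\dx \leq t_{0}^{\mf{t}}\lvert Q_{1/2}(0)\rvert$, use $t_{0}=\mint_{Q_{1}} v\,\dx$ together with $\lvert Q_{1/2}\rvert = 2^{-n}\lvert Q_{1}\rvert$, and combine the bounds into $\mint_{Q_{1/2}(0)} v^{\mf{t}}\,\dx \leq c(n)\,t_{0}^{\mf{t}}$; taking the $\mf{t}$-th root yields the stated reverse H\"older inequality with a constant comfortably below $2^{4n+4}$. The addendum \eqref{*} is then immediate: enlarging $c_{*}$ only decreases $\mf{t}$, and by Jensen's inequality on sets of finite measure the reverse H\"older bound is monotone in the exponent, so the statement remains valid for every $c_{*}\geq \tx{s}_{0}$. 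The main technical obstacle is the conversion of the shifted inequality \eqref{plan:gehring} into the shift‐free form \eqref{6.2.3}; the crucial quantitative point is that $\alpha \lesssim M$, so that the shift enters only linearly in the resulting constant of \eqref{6.2.3}, which matches exactly the $1/M$-scaling $\mf{t}-1 \sim (1-m)/M$ encoded in \eqref{rrr.1} and ensures that $c_{*}$ indeed depends only on $(n,\hat{c})$ as claimed.
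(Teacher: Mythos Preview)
Your argument has a genuine gap at the point where you convert \eqref{plan:gehring} into the hypothesis \eqref{6.2.3} of Lemma~\ref{lglg}. The function $\varphi(t):=\int_{\{v>t\}\cap Q_{1/2}} v^{\tx{m}}\dx$ is defined by integration over $Q_{1/2}(0)$, whereas the right-hand side of \eqref{plan:gehring} is an integral over $Q_{1}(0)$; thus the claimed rewriting ``$-\int_{t}^{\infty}s^{1-\tx{m}}\d\varphi(s)\le c\tx{M}^{\tx{m}}t^{1-\tx{m}}\varphi(t/\alpha)$'' is false as stated --- the right-hand side should involve a \emph{different} distribution function, supported on all of $Q_{1}(0)$. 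Lemma~\ref{lglg} requires the \emph{same} $\varphi$ on both sides of the differential inequality, so you cannot feed it two mismatched objects. This localisation obstruction is not a cosmetic issue: some mechanism is needed to pass information from the larger cube back to the smaller one, and doing so by a naive nested-cube iteration would spoil the precise linear-in-$\tx{M}$ dependence that the lemma is designed to capture.

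The paper resolves exactly this point by passing to the distance-weighted function $\tx{F}(x):=\dist(x,\partial Q_{1})^{n}v(x)$, following Giusti \cite[Section~6.4]{giu}. The weight renders the reverse H\"older inequality \eqref{6.2.2} valid for \emph{every} subcube $\ti{P}\Subset Q_{1}$ (the boundary singularity of $\dist(\cdot,\partial Q_{1})^{-n}$ is what ``restricts'' the estimate), so that the ensuing Calder\'on--Zygmund argument lives entirely on the single domain $Q_{1}$ and yields the shift-free level-set bound $\int_{\Phi_{t}}\tx{F}\dx\le c_{*}\tx{M}\,t^{1-\tx{m}}\int_{\Phi_{t}}\tx{F}^{\tx{m}}\dx$ with $\Phi_{t}=\{\tx{F}>t\}\subset Q_{1}$. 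This is precisely hypothesis \eqref{6.2.3} for a single $\varphi$, and Lemma~\ref{lglg} applies directly. Your shift-removal step (substitute $t\mapsto\alpha t$ and bound the leftover range $[t,\alpha t]$) is correct in spirit and does produce a linear $\tx{M}$-factor, but it does nothing about the $Q_{1/2}$--$Q_{1}$ mismatch; you would still need the distance weight (or an equivalent device) before Lemma~\ref{lglg} becomes applicable.
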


\begin{proof}
We ask the reader to have \cite[Section 6.4]{giu} at hand as we shall carefully track the dependency of the constants appearing in the various arguments leading to the proof of \cite[Theorem 6.6]{giu}. To be as close as possible to the setting of \cite[Section 6.4]{giu}, we let $\tx{d}(x):=\dist(x,\partial Q_{1})$, introduce function $\tx{F}(x):=\tx{d}(x)^{n}v(x)$, and denoting by $\ti{P}\Subset Q_{1}$ a cube and $P$ the cube concentric to $\ti{P}$ with half side, and rearrange \eqref{6.2.1} in terms of $\tx{F}$ as 
\eqn{6.2.2}
$$
\nra{\tx{F}}_{\LL^{1}(P)}\le c\tx{M}\nra{\tx{F}^{\tx{m}}}_{\LL^{1}(\ti{P})}^{\frac{1}{\tx{m}}},
$$
holding for all cubes $\ti{P}\Subset Q_{1}$, with $c\equiv c(n,\hat{c})$, that is \cite[inequality (6.47)]{giu}. We next look into \cite[Lemma 6.2]{giu}, that relies on a Calder\'on-Zygmund type argument combined with integral estimates on level sets. Introducing the superlevel set $\Phi_{t}:=\left\{x\in Q_{1}\colon \tx{F}(x)>t\right\}$ for all 
\eqn{t0}
$$
t\ge t_{0}:=\nra{v}_{\LL^{1}(Q_{1})}
$$
and carefully tracking the occurrences of constant $\tx{M}$ along the proof of \cite[Lemma 6.2]{giu} (fix $\lambda\approx \tx{M}$ up to dimensional constants there) we end up with
\eqn{6.2.2.2}
$$
\int_{\Phi_{t}}\tx{F}\dx\le c_{*}t^{1-\mf{m}}\tx{M}\int_{\Phi_{t}}\tx{F}^{\mf{m}}\dx,
$$
for $c_{*}\equiv c_{*}(n,\hat{c})$. Notice that there is no loss of generality in arbitrarily enlarging the value of $c_{*}$, and in particular to take it larger than the assigned $\tx{s}_{0}$, thus fixing dependencies $c_{*}\equiv c_{*}(n,\hat{c},\tx{s}_{0})$. In the light of \cite[Lemma 6.3]{giu}, inequality \eqref{6.2.2.2} can be rewritten as
\eqn{6.2.3.1}
$$
-\int_{t}^{\infty}s^{1-\mf{m}}\d\varphi(s)\le c_{*}t^{1-\mf{m}}\tx{M}\varphi(t)\qquad \mbox{for all} \ \ t\ge t_{0},
$$
where we set $\varphi(t):=\int_{\Phi_{t}}\tx{F}^{\mf{m}}\dx$. Notice that by definition, $\varphi$ is nonincreasing on $[t_{0},\infty)$ and, since $\tx{F}\in \LL^{1}(Q_{1})$, we see that $\varphi(t)\to 0$ as $t\to \infty$, so the assumptions of Lemma \ref{lglg} are verified - in particular, \eqref{6.2.3.1} is exactly \eqref{6.2.3} with $c_{0}\equiv c_{*}$, and $t_{0}$ as defined in \eqref{t0}, thus \eqref{6.2.7} holds true with exponent $\mf{t}$ resulting from \eqref{rrr} that is the one in \eqref{rrr.1}, and, via \cite[Lemma 6.3]{giu} we can write
$$
\int_{\Phi_{t_{0}}}\tx{F}^{\mf{t}}\dx\le 2t_{0}^{\mf{t}-1}\int_{\Phi_{t_{0}}}\tx{F}\dx.
$$
On the other hand, on $Q_{1}\setminus \Phi_{t_{0}}$ we have
$$
\int_{Q_{1}\setminus \Phi_{t_{0}}}\tx{F}^{\mf{t}}\dx\le t_{0}^{\mf{t}-1}\int_{Q_{1}\setminus \Phi_{t_{0}}}\tx{F}\dx,
$$
therefore we can conclude with
$$
\int_{Q_{1}}\tx{F}^{\mf{t}}\dx\le 4t_{0}^{\mf{t}-1}\int_{Q_{1}}\tx{F}\dx,
$$
and, coming back to the function $v$ we eventually get
$$
\int_{Q_{1/2}}v^{\mf{t}}\dx\le 2^{2n+2}t_{0}^{\mf{t}-1}\int_{Q_{1}}v\dx=2^{4n+2}\snr{Q_{1}}\left(\mint_{Q_{1}}v\dx\right)^{\mf{t}},
$$
where we used \eqref{t0} and $\eqref{rrr}_{1}$ to remove any dependency of the bounding constants on $\mf{t}$. The proof is complete.
\end{proof}
We return to Dirichlet problem \eqref{pde}, and observe that conditions $\eqref{cdq}_{1,2,3}$ guarantee that \cite[Theorem V]{c2} applies and
\eqn{5.1.0}
$$
u_{\varepsilon}\in \WW^{1,\infty}_{\loc}(B,\mathbb{R}^{N})\cap \WW^{2,2}_{\loc}(B,\mathbb{R}^{N}).
$$
We then let $Q\equiv Q_{r_{0}}(x_{Q})\Subset B/16$ be any cube with half-side length $r_{0}\in (0,1]$, scale the whole problem on $Q_{1}$ by letting
$v_{\varepsilon}(x):=u_{\varepsilon}(x_{Q}+r_{0}x)/r_{0}$, $v(x):=u(x_{Q}+r_{0}x)/r_{0}$, observe that the amount of regularity in \eqref{5.1.0} is obviously
preserved by 
the blown up map $v_{\varepsilon}$ so that the definition
\eqn{mmm}
$$
\tx{M}_{\varepsilon}\ge \max\left\{\nr{F^{\prime}(\nabla u_{\varepsilon})}_{\LL^{\infty}(Q)}^{\frac{q-p}{q-1}},1\right\},
$$
makes sense, and recall that system \eqref{elsd} holds \emph{verbatim} for the scaled maps $v_{\varepsilon}$ on $Q_{1}$.
We then let $Q_{\rr}(x_{0})\Subset Q_{1}$ be any cube with half side length $\rr\in (0,1]$, $\eta\in \CC^{1}_{c}(Q_{\rr}(x_{0}))$ be such that
$\mathds{1}_{Q_{\rr/2}(x_{0})}\le \eta\le \mathds{1}_{Q_{\rr}(x_{0})}$ and $\snr{\nabla \eta}\lesssim \rr^{-1}$, and test \eqref{elsd} against
$w_{\varepsilon}:=\eta^{2}(\partial_{s}v_{\varepsilon}-(\partial_{s} v_{\varepsilon})_{Q_{\rr}(x_{0})})$ to get, after using Cauchy-Schwarz inequality, \eqref{keymono.1},
\eqref{Vm}$_{1,3}$, \eqref{assf.2}, and Sobolev-Poincar\'e inequality,
\begin{eqnarray}\label{5.1.1}
  \mint_{Q_{\rr/2}(x_{0})}\tx{V}_{\varepsilon}\dx&:=&\mint_{Q_{\rr/2}(x_{0})}\left(\snr{\nabla V_{\mu,p}(\nabla v_{\varepsilon})}^{2}+
  \snr{\nabla V_{1,q^{\prime}}(F^{\prime}(\nabla v_{\varepsilon}))}^{2}+\gamma_{\varepsilon}\snr{\nabla V_{1,q}(\nabla v_{\varepsilon})}^{2}\right)\dx\nonumber \\
&\le&\frac{c}{\rr^{2}}\mint_{Q_{\rr}(x_{0})}\snr{F^{\prime\prime}(\nabla v_{\varepsilon})}\snr{\nabla v_{\varepsilon}-(\nabla v_{\varepsilon})_{Q_{\rr}(x_{0})}}^{2}\dx\nonumber \\
&&+\frac{c\gamma_{\varepsilon}}{\rr^{2}}\mint_{Q_{\rr}(x_{0})}\ell_{1}(\nabla v_{\varepsilon})^{q-2}\snr{\nabla v_{\varepsilon}-(\nabla v_{\varepsilon})_{Q_{\rr}(x_{0})}}^{2}\dx\nonumber \\
&\le&\frac{c\tx{M}_{\varepsilon}}{\rr^{2}}\mint_{Q_{\rr}(x_{0})}\snr{V_{\mu,p}(\nabla v_{\varepsilon})-(V_{\mu,p}(\nabla v_{\varepsilon}))_{Q_{\rr}(x_{0})}}^{2}\dx\nonumber \\
&&+\frac{c\gamma_{\varepsilon}}{\rr^{2}}\mint_{Q_{\rr}(x_{0})}\snr{V_{1,q}(\nabla v_{\varepsilon})-(V_{1,q}(\nabla v_{\varepsilon}))_{Q_{\rr}(x_{0})}}^{2}\dx\nonumber \\
&\le&c\tx{M}_{\varepsilon}\left(\mint_{Q_{\rr}(x_{0})}\snr{\nabla V_{\mu,p}(\nabla v_{\varepsilon})}^{2_{*;2}}\dx\right)^{\frac{2}{2_{*;2}}}\nonumber \\
  &&+c\left(\mint_{Q_{\rr}(x_{0})}\gamma_{\varepsilon}^{\frac{2_{*;2}}{2}}\snr{\nabla V_{1,q}(\nabla v_{\varepsilon})}^{2_{*;2}}\dx\right)^{\frac{2}{2_{*;2}}}\le
  c\tx{M}_{\varepsilon}\left(\mint_{Q_{\rr}(x_{0})}\tx{V}_{\varepsilon}^{\frac{2_{*;2}}{2}}\dx\right)^{\frac{2}{2_{*;2}}},
\end{eqnarray}
for $c\equiv c(N,L,L_{\mu},p,q)$. From \eqref{5.1.0}-\eqref{5.1.1} we see that the assumptions of Lemma \ref{revh} are satisfied with 
\eqn{*.1}
$$
\tx{s}_{0}:=\frac{q}{p},\qquad \qquad \hat{c}\equiv \max\{c,\tx{s}_{0}\},\qquad \qquad \tx{M}\equiv \tx{M}_{\varepsilon},\qquad \quad \tx{m}:=\frac{2_{*;2}}{2}=\frac{1}{2},
$$
therefore we obtain an exponent $\tx{t}\in (1,2)$ as in \eqref{rrr.1} such that $\nra{\tx{V}_{\varepsilon}}_{L^{\tx{t}}(Q_{1/2})}\le 2^{12}\nra{\tx{V}_{\varepsilon}}_{L^{1}(Q_{1})}$,
which in particular implies
\eqn{6.2.8}
$$
\left(\mint_{Q_{1/2}}\snr{\nabla V_{1,q^{\prime}}(F^{\prime}(\nabla v_{\varepsilon}))}^{2\tx{t}}\dx\right)^{\frac{1}{2\tx{t}}}\le 2^{6}\left(\mint_{Q_{1}}\tx{V}_{\varepsilon}\dx\right)^{\frac{1}{2}}.
$$
Next, we let $\eta_{0}\in \CC^{1}_{c}(Q_{1/2})$ such that $\mathds{1}_{Q_{1/4}}\le \eta_{0}\le \mathds{1}_{Q_{1/2}}$ and $\snr{\nabla \eta_{0}}\le 2$,
observe that we have $\eta_{0}V_{1,q^{\prime}}(F^{\prime}(\nabla v_{\varepsilon}))\in \WW^{1,2\mf{t}}(\mathbb{R}^{2},\mathbb{R}^{N\times 2})$  and via \eqref{6.2.8}, bound
\begin{eqnarray}\label{6.2.9}
  \nr{\nabla(\eta_{0}V_{1,q^{\prime}}(F^{\prime}(\nabla v_{\varepsilon}))) }_{\LL^{2\mf{t}}(\mathbb{R}^{n})}&\le &
  2\nr{V_{1,q^{\prime}}(F^{\prime}(\nabla v_{\varepsilon}))}_{\LL^{2\mf{t}}(Q_{1/2})}+\nr{\nabla V_{1,q^{\prime}}(F^{\prime}(\nabla v_{\varepsilon}))}_{\LL^{2\mf{t}}(Q_{1/2})}\nonumber \\
&\le&2\nr{V_{1,q^{\prime}}(F^{\prime}(\nabla v_{\varepsilon}))}_{\LL^{2\mf{t}}(Q_{1/2})}+2^{6}\nr{\tx{V}_{\varepsilon}}_{\LL^{1}(Q_{1})}^{\frac{1}{2}}.
\end{eqnarray}
We then apply Proposition \ref{shso}, and use \eqref{6.2.9} and $\eqref{rrr.1}$ to deduce that
\begin{eqnarray}\label{6.2.10}
\nr{V_{1,q^{\prime}}( F^{\prime}(\nabla v_{\varepsilon}))}_{\LL^{\infty}(Q_{1/4})}&\le& \nr{\eta_{0}V_{1,q^{\prime}}(F^{\prime}(\nabla v_{\varepsilon}))}_{\LL^{\infty}(\mathbb{R}^{n})}\nonumber \\
&\le&\left(\frac{2\mf{t}-1}{2\mf{t}-2}\right)^{\frac{2\mf{t}-1}{2\mf{t}}}
\frac{\snr{\supp (\eta_{0}V_{1,q^{\prime}}(F^{\prime}(\nabla v_{\varepsilon})))}^{\frac{1}{2}-\frac{1}{2\mf{t}}}}{2^{1/(2\mf{t})}\omega_{2}^{1/2}(2N)^{-1/2}}
\nr{\nabla (\eta_{0}V_{1,q^{\prime}}(F^{\prime}(\nabla v_{\varepsilon})))}_{\LL^{2\mf{t}}(\mathbb{R}^{n})}\nonumber \\
&\le&2^{8}\sqrt{\frac{N}{\pi}}\left(\frac{2\mf{t}-1}{2\mf{t}-2}\right)^{\frac{2\mf{t}-1}{2\mf{t}}}
\left(\nr{V_{1,q^{\prime}}(F^{\prime}(\nabla v_{\varepsilon}))}_{\LL^{2\mf{t}}(Q_{1/2})}+\nr{\tx{V}_{\varepsilon}}_{\LL^{1}(Q_{1})}^{\frac{1}{2}}\right).
\end{eqnarray}
Before proceeding further, let us make explicit the values of the constants/exponents involving $\mf{t}$ above by means of \eqref{rrr.1}, \eqref{*} and \eqref{*.1}. We have: 
$$
\left\{
\begin{array}{c}
\displaystyle
\ \frac{2\mf{t}-1}{2\mf{t}-2}=2c_{*}\tx{M}_{\varepsilon}\\ [14pt]\displaystyle
\  \frac{2\mf{t}-1}{2\mf{t}}=\frac{2c_{*}\tx{M}_{\varepsilon}}{4c_{*}\tx{M}_{\varepsilon}-1}\stackrel{\tx{M}_{\varepsilon}\ge 1}{\le}
\frac{2c_{*}}{4c_{*}-1}\stackrel{\eqref{*.1}_{1,2}}{\le}\frac{2q}{4q-p}\\ [14pt]\displaystyle
\ \frac{\mf{t}-1}{\mf{t}}=\frac{1}{4c_{*}\tx{M}_{\varepsilon}-1}\stackrel{\tx{M}_{\varepsilon}\ge 1}{\le}\frac{1}{4c_{*}-1}\stackrel{\eqref{*.1}_{1,2}}{\le}\frac{p}{4q-p},
\end{array}
\right.
$$
where we used that $1/(2\mf{t})^{\prime}$, $(\tx{t}-1)/ \tx{t}$ are decreasing with respect to both $\tx{M}_{\varepsilon}$ and $c_{*}$.
Incorporating this information in \eqref{6.2.10}, scaling back on $Q$, and letting
$\tx{U}_{\varepsilon}:=\snr{\nabla V_{\mu,p}(\nabla u_{\varepsilon})}^{2}+\snr{\nabla V_{1,q^{\prime}}(F^{\prime}(\nabla u_{\varepsilon}))}^{2}+
\gamma_{\varepsilon}\snr{\nabla V_{1,q}(\nabla u_{\varepsilon})}^{2}$, by $\eqref{vpvqn}$ we obtain after standard manipulations
\begin{eqnarray}\label{6.2.10.1}
  \snr{F^{\prime}(\nabla u_{\varepsilon}(x_{Q}))}^{\frac{q^{\prime}}{2}}&\le& c\tx{M}_{\varepsilon}^{\frac{2q}{4q-p}}
  \nr{V_{1,q^{\prime}}(F^{\prime}(\nabla u_{\varepsilon}))}_{\LL^{\infty}(Q)}^{\frac{\mf{t}-1}{\mf{t}}}\left(\mint_{Q}\snr{V_{1,q^{\prime}}(F^{\prime}(\nabla u_{\varepsilon}))}^{2}\dx\right)^{\frac{1}{2\mf{t}}}\nonumber \\
&&+c\tx{M}_{\varepsilon}^{\frac{2q}{4q-p}}\left(\mint_{Q}r_{0}^{2}\tx{U}_{\varepsilon}\dx\right)^{\frac{1}{2}}+c\nonumber \\
&\le&c\tx{M}_{\varepsilon}^{\frac{q(4q-3p)}{2(q-p)(4q-p)}}\left[1+\left(\mint_{Q}\snr{V_{1,q^{\prime}}(F^{\prime}(\nabla u_{\varepsilon}))}^{2}\dx\right)^{\frac{1}{2}}\right]\nonumber \\
&&+c\tx{M}_{\varepsilon}^{\frac{2q}{4q-p}}\left(\mint_{Q}r_{0}^{2}\tx{U}_{\varepsilon}\dx\right)^{\frac{1}{2}}+c,
\end{eqnarray}
with $c\equiv c(N,L,L_{\mu},p,q)$. Next, we fix parameters $2^{-3}\texttt{r}\le \tau_{2}<\tau_{1}\le 2^{-5/2}\texttt{r}$, correspondingly, cubes concentric with $B$, i.e.:
$B/8\subset Q_{2^{-3}\texttt{r}}(x_{B})\subset Q_{\tau_{2}}(x_{B})\subset Q_{\tau_{1}}(x_{B})\subset Q_{2^{-5/2}\texttt{r}}(x_{B})\Subset B/4$, and observe that for any $x_{Q}\in Q_{\tau_{2}}(x_{B})$,
cube $Q_{(\tau_{1}-\tau_{2})/8}(x_{Q})\Subset Q_{\tau_{1}}(x_{B})$, so, keeping in mind that there is no loss of generality in assuming that
$\nr{F^{\prime}(\nabla u_{\varepsilon})}_{\LL^{\infty}(Q_{2^{-3}\texttt{r}}(x_{B}))}\ge 1$ (otherwise the proof would be finished already), we apply \eqref{6.2.10.1} with
$Q\equiv Q_{(\tau_{1}-\tau_{2})/8}(x_{Q})$ and $\tx{M}_{\varepsilon}\equiv \nr{F'(\nabla u_{\varepsilon})}_{\LL^{\infty}(Q_{\tau_{1}}(x_{B}))}^{(q-p)/(q-1)}$, which verifies \eqref{mmm}, to get
\begin{eqnarray*}
  \snr{F^{\prime}(\nabla u_{\varepsilon}(x_{Q}))}^{\frac{q^{\prime}}{2}}&\le&
  c\nr{F^{\prime}(\nabla u_{\varepsilon})}_{\LL^{\infty}(Q_{\tau_{1}}(x_{B}))}^{\frac{q^{\prime}(4q-3p)}{2(4q-p)}}\left[1+\frac{1}{\tau_{1}-\tau_{2}}
    \left(\int_{Q_{\frac{\tau_{1}-\tau_{2}}{8}}(x_{Q})}\snr{V_{1,q^{\prime}}(F^{\prime}(\nabla u_{\varepsilon}))}^{2}\dx\right)^{\frac{1}{2}}\right]\nonumber \\
&&+c\nr{F^{\prime}(\nabla u_{\varepsilon})}_{\LL^{\infty}(Q_{\tau_{1}}(x_{B}))}^{\frac{q^{\prime}4(q-p)}{2(4q-p)}}\left(\int_{Q_{\frac{(\tau_{1}-\tau_{2})}{8}}(x_{Q})}\tx{U}_{\varepsilon}\dx\right)^{\frac{1}{2}}+c\nonumber \\
  &\le&c\nr{F^{\prime}(\nabla u_{\varepsilon})}_{\LL^{\infty}(Q_{\tau_{1}}(x_{B}))}^{\frac{q^{\prime}(4q-3p)}{2(4q-p)}}
  \left[1+\frac{1}{\tau_{1}-\tau_{2}}\left(\int_{Q_{\tau_{1}}(x_{B})}\snr{V_{1,q^{\prime}}(F^{\prime}(\nabla u_{\varepsilon}))}^{2}\dx\right)^{\frac{1}{2}}\right]\nonumber \\
&&+c\nr{F^{\prime}(\nabla u_{\varepsilon})}_{\LL^{\infty}(Q_{\tau_{1}}(x_{B}))}^{\frac{q^{\prime}4(q-p)}{2(4q-p)}}\left(\int_{Q_{\tau_{1}}(x_{B})}\tx{U}_{\varepsilon}\dx\right)^{\frac{1}{2}}+c,
\end{eqnarray*}
for $c\equiv c(N,L,L_{\mu},p,q)$. Taking the supremum for all $x_{Q}\in Q_{\tau_{2}}(x_{B})$, and setting $\theta_{1}:=(4q-3p)/(4q-p),\ \theta_{2}:=4(q-p)/(4q-p)\in (0,1)$, by Young inequality we obtain
\begin{eqnarray}\label{6.2.11}
  \nr{F^{\prime}(\nabla u_{\varepsilon})}_{\LL^{\infty}(Q_{\tau_{2}}(x_{B}))}^{\frac{q^{\prime}}{2}} &\le&
  c\nr{F^{\prime}(\nabla u_{\varepsilon})}_{\LL^{\infty}(Q_{\tau_{1}}(x_{B}))}^{\frac{\theta_{1} q^{\prime}}{2}}\left[1+\frac{1}{\tau_{1}-\tau_{2}}
    \left(\int_{B/4}\snr{V_{1,q^{\prime}}(F^{\prime}(\nabla u_{\varepsilon}))}^{2}\dx\right)^{\frac{1}{2}}\right]\nonumber \\
&&+c\nr{F^{\prime}(\nabla u_{\varepsilon})}_{\LL^{\infty}(Q_{\tau_{1}}(x_{B}))}^{\frac{\theta_{2} q^{\prime}}{2}}\left(\int_{B/4}\tx{U}_{\varepsilon}\dx\right)^{\frac{1}{2}}+c\nonumber \\
&\le&\frac{1}{4}\nr{F^{\prime}(\nabla u_{\varepsilon})}_{\LL^{\infty}(Q_{\tau_{1}}(x_{B}))}^{\frac{q^{\prime}}{2}}+c\left(\int_{B/4}\tx{U}_{\varepsilon}\dx\right)^{\frac{1}{2(1-\theta_{2})}}\nonumber \\
&&+\frac{c}{(\tau_{1}-\tau_{2})^{\frac{1}{1-\theta_{1}}}}\left(\int_{B/4}\snr{V_{1,q^{\prime}}(F^{\prime}(\nabla u_{\varepsilon}))}^{2}\dx\right)^{\frac{1}{2(1-\theta_{1})}}+c
\end{eqnarray}
with $c\equiv c(N,L,L_{\mu},p,q)$. We can then apply Lemma \ref{l5}, \eqref{4.2.0}, \eqref{approx}$_{1,2}$, and \eqref{hdes.1} (keep Remark \ref{rem} in mind), to conclude with
\begin{eqnarray}\label{6.2.12}
   \nr{F^{\prime}(\nabla u_{\varepsilon})}_{\LL^{\infty}(B/8)}^{\frac{q^{\prime}}{2}}&\le&c\left(\mint_{B/4}\snr{V_{1,q^{\prime}}(F^{\prime}(\nabla u_{\varepsilon}))}^{2}\dx\right)^{\frac{1}{2(1-\theta_{1})}}
   +c\left(\int_{B/4}\tx{U}_{\varepsilon}\dx\right)^{\frac{1}{2(1-\theta_{2})}}+c\nonumber \\
 &\le&c\left(\mint_{B}F(\nabla u_{\varepsilon})+\gamma_{\varepsilon}(1+\snr{\nabla u_{\varepsilon}}^{2})^{\frac{q}{2}}+1\dx\right)^{\tx{b}}\nonumber \\
 &\le&c\left(\mint_{B}1+F(\nabla u)\dx\right)^{\tx{b}}+\texttt{o}(\varepsilon),
\end{eqnarray}
with $c\equiv c(N,L,L_{\mu},p,q)$, $\tx{b}\equiv \tx{b}(p,q)$. Now, by $\eqref{approx}_{3}$ we know that $\nabla u_{\varepsilon}\to \nabla u$ strongly in $\LL^{p}(B,\mathbb{R}^{N})$,
so by \eqref{assf}$_{1}$ (up to subsequences) $F^{\prime}(\nabla u_{\varepsilon})\to F^{\prime}(\nabla u)$ almost everywhere.
This means that we can send $\varepsilon\to 0$ in \eqref{6.2.12} and pass to the weak$^{*}$-limit to deduce
$$
\nr{F^{\prime}(\nabla u)}_{\LL^{\infty}(B/8)}^{\frac{q^{\prime}}{2}}\le c\left(\mint_{B}1+F(\nabla u)\dx\right)^{\tx{b}},
$$
for $c\equiv c(N,L,L_{\mu},p,q)$, $\tx{b}\equiv \tx{b}(p,q)$, which, together with \eqref{f'} gives \eqref{ll}
(with possibly different exponent $\tx{b}\equiv \tx{b}(p,q)$), and a standard covering argument leads to $\nabla u\in \LL^{\infty}_{\loc}(\Omega,\mathbb{R}^{N\times 2})$, and,
whenever $\Omega_{2}\Subset \Omega_{1}\Subset \Omega$ are open sets as in the statement of Theorem \ref{2dreg}, it is
\eqn{cover}
$$
\nr{\nabla u}_{\LL^{\infty}(\Omega_{2})}\le c(N,L,L_{\mu},p,q,\F(u;\Omega_{1}),\dist(\Omega_{2},\partial\Omega_{1})).
$$

\subsection{Proof of Theorem \ref{2dreg}: full regularity}
Once it is known, by either of the arguments in the previous two sections, that minima of $\F$ are locally Lipschitz continuous,
the nonuniform ellipticity of the integrand $F$ prescribed by \eqref{assf}$_{3}$ becomes immaterial. The gradient boundedness, Theorem \ref{hd},
and a simple difference quotients argument guarantee that we can recover \eqref{elsd} with $F$, $u$, instead of $F_{\varepsilon}$, $u_{\varepsilon}$, so,
fixing open sets $\Omega_{2}\Subset \Omega_{1}\Subset \Omega$ as in \eqref{cover}, and cubes $Q_{\rr}(x_{0})\Subset Q\Subset \Omega_{2}$, with
$B/16\Subset Q\equiv Q_{2^{-15/4}\texttt{r}}(x_{Q})\Subset B/8$, computations analogous to those in \eqref{5.1.1} eventually yield 
\begin{flalign*}
  &\mint_{Q_{\rr/2}(x_{0})}\snr{\nabla V_{\mu,p}(\nabla u)}^{2}+\snr{\nabla V_{1,q^{\prime}}(F^{\prime}(\nabla u))}^{2}\dx\le
  \frac{c}{\rr^{2}}\mint_{Q_{\rr}(x_{0})}\snr{F^{\prime\prime}(\nabla u)}\snr{\nabla u-(\nabla u)_{Q_{\rr}(x_{0})}}^{2}\dx\nonumber \\
&\qquad \qquad \qquad \le\frac{c\tx{M}}{\rr^{2}}\mint_{Q_{\rr}(x_{0})}\snr{V_{\mu,p}(\nabla u)-(V_{\mu,p}(\nabla u))_{Q_{\rr}(x_{0})}}^{2}\dx\nonumber \\
  &\qquad \qquad \quad \ \stackrel{\eqref{cover}}{\le}c\left(\mint_{Q_{\rr}(x_{0})}\left(\snr{\nabla V_{\mu,p}(\nabla u)}^{2}+
  \snr{\nabla V_{1,q^{\prime}}(F^{\prime}(\nabla u))}^{2}\right)^{\mf{m}}\dx\right)^{\frac{1}{\mf{m}}},
\end{flalign*}
where we set $\tx{M}:=1+\nr{F^{\prime}(\nabla u)}_{\LL^{\infty}(Q)}^{(q-p)/(q-1)}$, and it is $c\equiv c(N,L,L_{\mu},p,q,\F(u;\Omega_{1}),\dist(\Omega_{2},\partial\Omega_{1}))$, $\mf{m}=1/2$.
We can now apply classical Gehring lemma \cite[Theorem 6.6]{giu} to prove the existence of an exponent
$\mf{t}\equiv \mf{t}(N,L,L_{\mu},p,q,\F(u;\Omega_{1}),\dist(\Omega_{2},\partial\Omega_{1}))\in (1,2)$, cf. \eqref{rrr.1}, such that 
\begin{flalign*}
&\nra{\nabla V_{\mu,p}(\nabla u)}_{\LL^{2\tx{t}}(B/16)}+\nra{\nabla V_{1,q^{\prime}}(F^{\prime}(\nabla u))}_{\LL^{2\tx{t}}(B/16)}\nonumber \\
&\qquad \qquad \qquad\qquad \le c\nra{\nabla V_{\mu,p}(\nabla u)}_{\LL^{2\tx{t}}(Q/2)}+c\nra{\nabla V_{1,q^{\prime}}(F^{\prime}(\nabla u))}_{\LL^{2\tx{t}}(Q/2)}\nonumber \\
&\qquad \qquad \qquad \qquad\le c\nra{\nabla V_{\mu,p}(\nabla u)}_{\LL^{2}(Q)}+c\nra{\nabla V_{1,q^{\prime}}(F^{\prime}(\nabla u))}_{\LL^{2}(Q)}\le c\nra{(F(\nabla u)+1)}_{\LL^{1}(B)}^{\tx{b}},
\end{flalign*}
where we also used \eqref{hdes}, and it is $c\equiv c(N,L,L_{\mu},p,q,\F(u;\Omega_{1}),\dist(\Omega_{2},\partial\Omega_{1})$, $\tx{b}\equiv \tx{b}(p,q)$, and \eqref{ll.1} is proven.
By Sobolev Morrey embedding theorem we further obtain that $V_{\mu,p}(\nabla u)\in \CC^{0,1-1/\mf{t}}(Q/2,\mathbb{R}^{N\times 2})$, which in turn implies that
$\nabla u\in \CC^{0,\beta_{0}}(Q/2,\mathbb{R}^{N\times 2})$, $\beta_{0}\equiv \beta_{0}(N,L,L_{\mu},p,q,,\F(u;\Omega_{1}),\dist(\Omega_{2},\partial\Omega_{1}))\in (0,1)$.
With this last information at hand, if $\mu>0$ in \eqref{assf}$_{3}$ we can follow a well-known strategy \cite[Section 8.8]{giu} to conclude with gradient H\"older
continuity up to any exponent less than one. If in addition to the nondegeneracy condition $\mu>0$, the integrand governing $\F$ is also real analytic,
by-now standard theory \cite[Chapter 6]{MorreyB}, \cite{m1} yields that $u$ is real analytic. A standard covering argument finishes the proof.
\subsubsection{Proof of Theorem \ref{polyt}, \emph{(}ii.\emph{)}}\label{iiii} The proof of the second claim of Theorem \ref{polyt} is a direct consequence
of Proposition \ref{poly} and Theorem \ref{2dreg} formulated for minima of variational integrals governed by nondegenerate, analytic integrands. Recalling the content of Section \ref{poly11}, the proof of Theorem \ref{polyt} is complete.
\begin{remark}\label{rempq}
\emph{Our techniques apply also to the genuine $(p,q)$-nonuniform ellipticity conditions, i.e. 
\eqn{r72}
$$
\ell_{\mu}(z)^{p-2}\mathds{I}_{N\times 2}\lesssim F^{\prime\prime}(z)\lesssim \ell_{\mu}(z)^{p-2}\mathds{I}_{N\times 2}+\ell_{\mu}(z)^{q-2}\mathds{I}_{N\times 2}
$$
in the sense of bilinear forms, with $\mu\in [0,1]$, and $2\le p< q<2p$, the usual bound in two space dimensions.
To adapt the arguments in Sections \ref{2dm}-\ref{2dg} to the $(p,q)$-framework, we recall that \eqref{r72} implies 
\eqn{remelr}
$$
\frac{\snr{F^{\prime\prime}(z)}}{\ell_{\mu}(z)^{p-2}}\lesssim 1+\ell_{\mu}(z)^{q-p},
$$
accordingly, the following (very minor) modifications need to be (respectively) implemented - higher differentiability results in the spirit
of Theorem \ref{hd} that allow controlling the resulting right-hand side term being available in \cite{elm2,ELM}.
\begin{itemize}
\item Since \eqref{remelr} is in force, in \eqref{4.12} H\"older inequality needs to be applied with conjugate exponents $(p/(q-p),p/(2p-q))$,
  so in the last line of \eqref{4.12}, term $\tx{P}_{\varepsilon}'(\rr)^{\frac{q-p}{2p}}$ replaces $\tx{S}_{\varepsilon}'(\rr)^{\frac{q-p}{2q}}$, and
  $\nra{\tx{V}_{\varepsilon,p}}_{\LL^{\frac{2p}{2p-q}}(\partial B_{\rr})}$ appears instead of $\nra{\tx{V}_{\varepsilon,p}}_{\LL^{\frac{2q}{p}}(\partial B_{\rr})}$.
  As a consequence, $V_{\mu,p}(\nabla u_{\varepsilon})$ replaces $V_{1,q^{\prime}}(F^{\prime}(\nabla u_{\varepsilon}))$ everywhere in \eqref{log.1},
  and the term in parenthesis that needs to be controlled via trace theorem, now is $\nra{V_{\mu,p}(\nabla u_{\varepsilon})}_{\LL^{2}(\partial B_{\rr})}^{\frac{q-p}{p}}$,
  so in \eqref{mt} the argument of the exponential must be replaced by $\snr{V_{\mu,p}(\nabla u_{\varepsilon})}^{2}$ (any control on the stress tensor
  $F^{\prime}(\nabla u_{\varepsilon})$ being lost in the genuine $(p,q)$-case).
\item In \eqref{mmm} the lower bound on $\tx{M}_{\varepsilon}$ needs to be replaced by $\max\{1,\nr{\nabla u_{\varepsilon}}_{\LL^{\infty}(Q)}^{q-p}\}$;
  in \eqref{6.2.8} there must be $\snr{\nabla V_{\mu,p}(\nabla u_{\varepsilon})}$ instead of $\snr{\nabla V_{1,q^{\prime}}(F^{\prime}(\nabla u_{\varepsilon}))}$
  that is no longer available; and in \eqref{*.1}$_{1}$ we need that $\tx{s}_{0}= p/(2p-q)$ to be able to apply Young inequality and reabsorbe terms in \eqref{6.2.11}.
\end{itemize}
Very recently, Sch\"affner \cite{sch24} improved the assumptions for the validity of Lipschitz regularity for scalar local minimizers of $(p,q)$-nonuniformly elliptic functionals in two space dimensions by updating the bound on exponents $(p,q)$ to $q<3p$. The approach in \cite{sch24} relies on a fine argument based on $1$-d interpolation and maximum principle, that are in general not available in the vectorial setting.}
\end{remark}

\section{The scalar case}\label{scalar}
This section is devoted to the proof, based on a renormalized \cite{demi3} version of the Moser's iteration designed in \cite{bs,demi1,demi3,ma1},
of local Lipschitz continuity for scalar minimizers of functional $\F$ under Legendre $(p,q)$-growth conditions.
Given the quite technical content of this last part of the paper, for simplicity we split it in two steps, ultimately yielding the proof of Theorem \ref{scat}.

\subsection{An abstract \texorpdfstring{$\WW^{1,2}$}{} to \texorpdfstring{$\LL^{\infty}$}{} iteration}
Here we derive an abstract $\LL^{\infty}$-$\WW^{1,2}$ estimate valid for any sufficiently regular function satisfying a suitable reverse H\"older type
inequality for arbitrarily large powers.
\begin{lemma}\label{am}
Let $-1\le\alpha_{0}<\infty$, $0<\gamma_{0}<1$ be numbers, $v\in \LL^{\infty}_{\loc}(B_{1}(0))$ be a function such that $v^{(\alpha_{0}+2)/2}\in \WW^{1,2}_{\loc}(B_{1}(0))$ and,
fixed balls $B_{1/8}(0)\subset B_{\sigma}(0)\Subset B_{1}(0)$, $\sigma\in (1/8,1]$ arbitrary, for all nonnegative $\eta\in \CC^1_{c}(B_{\sigma}(0))$, any $\alpha\ge \alpha_{0}$,
and some absolute constants $c_{0}\ge 1$, $\tx{M}\ge 1$, inequality
\eqn{6.1}
$$
\nr{\eta\nabla v^{\frac{\alpha+2}{2}}}_{\LL^{2}(B_{1})}\le c_{0}\tx{A}_{\alpha}\tx{M}^{\frac{1}{2}}\nr{(v^{\frac{\alpha+2}{2}}+1)\nabla \eta}_{\LL^{2}(B_{1})},
$$
holds true with 
\eqn{aa}
$$
\tx{A}_{\alpha}:=\left\{
\begin{array}{c}
\displaystyle
\ 1\qquad \quad \mbox{if} \ \  \alpha=\alpha_{0}\\ [8pt]\displaystyle
\ \frac{\alpha+2}{\alpha+1}\qquad\quad \mbox{if} \ \ \alpha>\alpha_{0}.
\end{array}
\right.
$$
Then, for every ball $B_{1/8}(0)\subseteq B_{\tau_{2}}(0)\subset B_{\tau_{1}}(0)\subseteq B_{\sigma}(0)$, the $\LL^{\infty}$-$\WW^{1,2}$ estimate
\eqn{6.8}
$$
\nr{v}_{\LL^{\infty}(B_{\tau_{2}})}\le \frac{c\tx{M}^{\frac{\gamma}{(2+\alpha_{0})(1-\gamma)}}}{(\tau_{1}-\tau_{2})^{\tx{a}_{0}}}\nra{(v^{\frac{\alpha_{0}+2}{2}}+1)}_{\WW^{1,2}(B_{\tau_{1}})}^{\frac{2}{\alpha_{0}+2}}
$$
is verified, where
\eqn{gamma}
$$
\gamma:=\left\{
\begin{array}{c}
\displaystyle
\ \frac{n-3}{n-1}\qquad\quad  \mbox{if} \ \  n\ge 4\\ [8pt]\displaystyle
\ \gamma_{0}\qquad\quad  \mbox{if} \ \ n\in \{2,3\},
\end{array}
\right.
$$
and it is $c,\tx{a}_{0}\equiv c,\tx{a}_{0}(n,\alpha_{0},\gamma_{0})$.
\end{lemma}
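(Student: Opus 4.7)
The plan is to carry out a Moser-type iteration driven by hypothesis \eqref{6.1}, with the standard $n$-dimensional Sobolev embedding replaced, at each iteration step, by the $(n-1)$-dimensional Sobolev embedding on spheres \eqref{spv}; this is the streamlined Bella \& Sch\"affner optimization described in Section \ref{tec}. The decisive feature of the sphere-based iteration is that the geometric Moser series contributing to the final $L^{\infty}$-bound telescopes with ratio $\gamma$, producing the exponent $\gamma/((\alpha_{0}+2)(1-\gamma))$ of $M$ in \eqref{6.8}. The parameter $\gamma$ is exactly the reciprocal of the Sobolev gain on spheres: in dimensions $n\ge 4$ we take $\gamma=2/2^{*}_{n-1}=(n-3)/(n-1)$, whereas for $n\in\{2,3\}$ the sphere Sobolev exponent $2^{*}_{n-1}$ is a free parameter, which forces the appearance of the arbitrary $\gamma_{0}\in(0,1)$ in \eqref{gamma}.

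First I would set $w_{\alpha}:=v^{(\alpha+2)/2}+1$ for $\alpha\ge \alpha_{0}$, fix concentric balls $B_{\tau_{2}}\subset B_{\tau_{1}}\subset B_{\sigma}$, and test \eqref{6.1} against a radially symmetric cut-off $\eta$ supported in $B_{\tau_{1}}$, equal to $1$ on $B_{\tau_{2}}$, with $\snr{\nabla\eta}\lesssim (\tau_{1}-\tau_{2})^{-1}$. A standard Fubini-type averaging argument, in the spirit of the selection of "good radii" performed in the proof of Theorem \ref{hd}, produces a radius $\rho=\rho(\alpha)\in(\tau_{2},\tau_{1})$ on which both the spherical integrals of $\snr{\nabla w_{\alpha}}^{2}$ and of $w_{\alpha}^{2}$ inherit mean-value control by their annular counterparts, and on which the Sobolev-Poincar\'e inequality \eqref{spv} grants the integrability gain from $\LL^{2}(\partial B_{\rho})$ to $\LL^{2^{*}_{n-1}}(\partial B_{\rho})$.

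Second, interpolating the sphere $\LL^{2^{*}_{n-1}}$-norm of $w_{\alpha}$ between its sphere $\LL^{2}$-norm and its gradient (to which \eqref{6.1} applies), and then integrating over $\rho\in(\tau_{2},\tau_{1})$, I would arrive at a one-step reverse H\"older-type inequality of the form
\begin{flalign*}
\nra{w_{\alpha}}_{\LL^{2\chi}(B_{\tau_{2}})}\le \frac{c\,A_{\alpha}M^{\gamma/2}}{(\tau_{1}-\tau_{2})^{\sigma_{0}}}\,\nra{w_{\alpha}}_{\LL^{2}(B_{\tau_{1}})},
\end{flalign*}
with $\chi=1/\gamma>1$ and $\sigma_{0}$ a dimensional constant. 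The crucial point is that the weight on $M$ here is $\gamma$ rather than $1$: this is obtained by activating \eqref{6.1} only on the sphere Sobolev factor of the interpolation (with exponent $1-\gamma$), and letting the remaining $L^{2}$-factor on the sphere be absorbed by the averaged annular $\LL^{2}$-norm on the right-hand side.

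Third, iterating this inequality along $\alpha_{k}+2=(\alpha_{0}+2)\chi^{k}$ on nested balls of radii $r_{k}$ with $r_{k}-r_{k+1}\simeq (\tau_{1}-\tau_{2})2^{-k}$, one observes that the coefficient $A_{\alpha_{k}}=(\alpha_{k}+2)/(\alpha_{k}+1)$ is uniformly bounded for $k\ge 1$ and contributes only to the absolute constant; the geometric series in the $r_{k}-r_{k+1}$ factors produces the polynomial loss $(\tau_{1}-\tau_{2})^{-\tx{a}_{0}}$; and the telescoping Moser sum $\sum_{k\ge 0}\gamma/(\alpha_{k}+2)=\gamma/((\alpha_{0}+2)(1-\gamma))$ matches exactly the exponent of $M$ in \eqref{6.8}. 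Passing to the limit $k\to\infty$ yields the $L^{\infty}$-bound after extracting the $2/(\alpha_{0}+2)$-th root. The main obstacle, as usual in this type of iteration, is the delicate calibration of the interpolation at step 2: a careless application of \eqref{6.1} at every sphere would force the per-step $M$-cost to be $M^{1/2}$, yielding only the suboptimal exponent $1/((\alpha_{0}+2)(1-\gamma))$, and obtaining the sharp $M^{\gamma/2}$ per step is precisely what makes the sphere-based optimization essential.
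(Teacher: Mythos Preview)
Your overall plan—Moser iteration driven by \eqref{6.1} together with Sobolev on spheres—matches the paper, and your telescoping computation of the final $M$-exponent is correct \emph{once the one-step estimate is granted}. The gap is in Step~2: the one-step inequality $\nra{w_\alpha}_{\LL^{2\chi}(B_{\tau_2})}\le cA_\alpha M^{\gamma/2}(\tau_1-\tau_2)^{-\sigma_0}\nra{w_\alpha}_{\LL^2(B_{\tau_1})}$ with $2\chi=2^*_{n-1}$ is not obtainable by the ``interpolation'' you describe. Since $2\chi$ already equals the sphere Sobolev exponent, there is nothing to interpolate; \eqref{spv} applied to $w_\alpha$ on $\partial B_\rho$ uses the gradient with weight $1$, not $1-\gamma$, so \eqref{6.1} would enter with cost $M^{1/2}$ per step. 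Moreover, integrating the sphere $\LL^{2^*_{n-1}}$-estimate over $\rho\in(\tau_2,\tau_1)$ controls $w_\alpha$ only on the annulus, not on $B_{\tau_2}$, and even on the annulus the power mismatch (you would need $\int\nr{\nabla w_\alpha}_{\LL^2(\partial B_\rho)}^{2\chi}\d\rho$, which is not dominated by $\nr{\nabla w_\alpha}_{\LL^2(B)}^{2\chi}$) blocks the passage.

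The paper's mechanism is structurally different. The iteration runs between $\WW^{1,2}$ norms, not $\LL^p$ norms, and the sphere Sobolev inequality \eqref{spv.2} is applied not to $w_\alpha$ but to the \emph{lower} power $\tilde v:=v^{\gamma(\alpha+2)/2}$, giving $\nr{w_\alpha}_{\LL^2(\partial B_s)}^2=\nr{\tilde v}_{\LL^{2/\gamma}(\partial B_s)}^{2/\gamma}\lesssim\nr{\tilde v}_{\WW^{1,2}(\partial B_s)}^{2/\gamma}$. Combined with the Bella--Sch\"affner cut-off optimization (which converts $\nr{w_\alpha\nabla\eta}_{\LL^2}^2$ on the right of \eqref{6.1} into an integral of $\nr{w_\alpha}_{\LL^2(\partial B_s)}^{2\gamma}$ over $s$) and the unbalanced Poincar\'e \eqref{spv.3}, this yields the one-step $\nr{w_{\alpha}}_{\WW^{1,2}(B_{i+1})}^2\lesssim A_\alpha^2 M\,\nr{\tilde v}_{\WW^{1,2}(B_i)}^{2/\gamma}$. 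The per-step cost is $M^1$, not $M^\gamma$; the extra $\gamma$ in the final exponent appears because after normalising by $\alpha_i+2$ the sum runs over $i\ge 1$, giving $\sum_{i\ge1}\gamma^i/(\alpha_0+2)=\gamma/((\alpha_0+2)(1-\gamma))$—equivalently, because the $\WW^{1,2}$ norm on the right of \eqref{6.8} already absorbs the gradient information that would otherwise cost an application of \eqref{6.1} at level $\alpha_0$. Your diagnosis that sharpness hinges on a reduced per-step cost $M^{\gamma/2}$ misidentifies the source of the gain.
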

\begin{proof}
We fix parameters $1/8\le \tau_{2}<\tau_{1}\le \sigma< 1$, and, for $i\in \N\cup \{0\}$ introduce radii $\rr_{i}:=\tau_{2}+2^{-i}(\tau_{1}-\tau_{2})$, balls $B_{i}:=B_{\rr_{i}}(0)$, cut-off functions $\eta_{i}\in \CC^{1}_{c}(B_{i})$ such that $\mathds{1}_{B_{i+1}}\le \eta_{i}\le \mathds{1}_{B_{i}}$, and apply \cite[Lemma 3]{bs}, see also \cite[Lemma 3]{krs}, to the right-hand side of \eqref{6.1} with $\eta= \eta_{i}$. We obtain
\begin{flalign}\label{6.3}
\nr{(v^{\frac{\alpha+2}{2}}+1)\nabla \eta_{i}}_{\LL^{2}(B_{i})}^{2}\le \frac{2}{(\rr_{i}-\rr_{i+1})^{\frac{\delta+1}{\delta}}}\left(\int_{\rr_{i+1}}^{\rr_{i}}\left(\int_{\partial B_{s}}(v^{\alpha+2}+1)\d\mathcal{H}^{n-1}(x)\right)^{\delta}\ds\right)^{\frac{1}{\delta}}
\end{flalign}
for all $\delta\in (0,1]$. With $\gamma\in (0,1]$ as in \eqref{gamma}, we record that 
\eqn{6.2}
$$
\left(\left(\frac{2}{\gamma}\right)_{*;n-1}\right)^{-1}=\min\left\{\frac{\gamma}{2}+\frac{1}{n-1},1\right\}\ge \frac{1}{2},
$$
define $\ti{v}:=v^{\frac{\gamma(\alpha+2)}{2}}$, and estimate
\begin{eqnarray}\label{6.2.0}
\nra{v^{\alpha+2}+1}_{\LL^{1}(\partial B_{s})}^{\delta}&=&\nra{\ti{v}^{\frac{2}{\gamma}}+1}^{\delta}_{\LL^{1}(\partial B_{s})}\nonumber \\
&\stackrel{\eqref{spv.2}}{\le}&cs^{\frac{2\delta}{\gamma}}\nra{\nabla \ti{v}}_{\LL^{\left(\frac{2}{\gamma}\right)_{*;n-1}}(\partial B_{s})}^{\frac{2\delta}{\gamma}}+c\nra{(\ti{v}+1)}_{\LL^{\left(\frac{2}{\gamma}\right)_{*;n-1}}(\partial B_{s})}^{\frac{2\delta}{\gamma}}\nonumber \\
&\stackrel{\eqref{6.2}}{\le}&cs^{\frac{2\delta}{\gamma}}\nra{\nabla \ti{v}}_{\LL^{2}(\partial B_{s})}^{\frac{2\delta}{\gamma}}+c\nra{(\ti{v}+1)}_{\LL^{2}(\partial B_{s})}^{\frac{2\delta}{\gamma}},
\end{eqnarray}
for $c\equiv c(n,\gamma_{0})$. We plug \eqref{6.2.0} in \eqref{6.3}, choose $\delta=\gamma$, and restore the original notation to get
\begin{eqnarray}\label{6.4}
\nr{(v^{\frac{\alpha+2}{2}}+1)\nabla \eta_{i}}_{\LL^{2}(B_{i})}^{2}\le \frac{c}{(\rr_{i}-\rr_{i+1})^{\frac{\gamma+1}{\gamma}}}\nr{(v^{\frac{\gamma(\alpha+2)}{2}}+1)}_{\WW^{1,2}(B_{i})}^{\frac{2}{\gamma}},
\end{eqnarray}
where we also used that $\rr_{i}\ge 2^{-3}$ for all $i\in \N\cup \{0\}$, and it is $c\equiv c(n,\gamma_{0})$. 
Merging \eqref{6.4} and \eqref{6.1} we obtain,
\begin{eqnarray}\label{6.5}
\nr{(v^{\frac{\alpha+2}{2}}+1)}_{\WW^{1,2}(B_{i+1})}^{2}&\stackrel{\eqref{spv.3}}{\le}&c\nr{\nabla v^{\frac{\alpha+2}{2}}}_{\LL^{2}(B_{i+1})}^{2}+c\nr{(v^{\frac{\alpha+2}{2}}+1)}_{\LL^{2\gamma}(B_{i+1})}^{2}\nonumber \\
&\stackrel{\eqref{6.1},\eqref{6.4}}{\le}&
 \frac{c\tx{A}_{\alpha}^{2}\tx{M}}{(\rr_{i}-\rr_{i+1})^{\frac{\gamma+1}{\gamma}}}\nr{(v^{\frac{\gamma(\alpha+2)}{2}}+1)}_{\WW^{1,2}(B_{i})}^{\frac{2}{\gamma}},
\end{eqnarray}
with $c\equiv c(n,c_{0},\gamma_{0})$. We have only one degree of freedom left in \eqref{6.5}, that is $\alpha\ge \alpha_{0}\ge -1$,
so we introduce sequence $\{\alpha_{i}\}_{i\in \N\cup\{0\}}$, recursively defined as
$$
\alpha_{0}\ge-1 \ \ \mbox{arbitrary}, \qquad \qquad \alpha_{i}:=\frac{1}{\gamma}\alpha_{i-1}+2\left(\frac{1}{\gamma}-1\right),\ \ i\in \N.
$$
The above position immediately implies that
$$
\alpha_{i}=\frac{2+\alpha_{0}}{\gamma^{i}}-2\to \infty \quad  \mbox{as}\ \ i\to \infty.
$$
Set $\tx{V}_{i}:=\nra{(v^{\frac{\alpha_{i}+2}{2}}+1)}_{\WW^{1,2}(B_{i})}^{\frac{2}{\alpha_{i}+2}}$, and for $i\in \N$ rearrange \eqref{6.5} as
\begin{flalign}\label{6.6}
  \tx{V}_{i}\le \left[c\tx{A}_{\alpha_{i}}^{2}
    \left(\frac{2^{i}}{\tau_{1}-\tau_{2}}\right)^{\frac{\gamma+1}{\gamma}}\right]^{\frac{1}{\alpha_{i}+2}}\tx{M}^{\frac{1}{\alpha_{i}+2}}\tx{V}_{i-1}^{\frac{\alpha_{i-1}+2}{\gamma(\alpha_{i}+2)}},
\end{flalign}
with $c\equiv c(n,c_{0},\gamma_{0})$.
Iterating \eqref{6.6} over $j\in \{0,\cdots,i-1\}$ we end up with
\begin{flalign}\label{6.7}
  \tx{V}_{i}\le \tx{M}^{\frac{1}{\alpha_{i}+2}\sum_{k=0}^{i-1}\frac{1}{\gamma^{k}}} \tx{V}_{0}^{\frac{\alpha_{0}+2}{\gamma^{i}(\alpha_{i}+2)}} \prod_{k=0}^{i-1}\left[c\tx{A}_{\alpha_{i-k}}^{2}
    \left(\frac{2^{i-k}}{\tau_{1}-\tau_{2}}\right)^{\frac{\gamma+1}{\gamma}}\right]^{\frac{1}{\alpha_{i}+2}\sum_{j=0}^{k}\frac{1}{\gamma^{j}}}.
\end{flalign}
Let us examine the behavior as $i\to \infty$ of the various quantities appearing in \eqref{6.7}. We have
\begin{flalign*}
\frac{1}{\alpha_{i}+2}\sum_{k=0}^{i-1}\frac{1}{\gamma^{k}}
\to \frac{\gamma}{(2+\alpha_{0})(1-\gamma)},\qquad \qquad \qquad \frac{\alpha_{0}+2}{\gamma^{i}(\alpha_{i}+2)}=1,
\end{flalign*}
and
\begin{flalign*}
&\prod_{k=0}^{i-1}\left[c\tx{A}_{\alpha_{i-k}}^{2}\left(\frac{2^{i-k}}{\tau_{1}-\tau_{2}}\right)^{\frac{\gamma+1}{\gamma}}\right]^{\frac{1}{\alpha_{i}+2}\sum_{j=0}^{k}\frac{1}{\gamma^{j}}}\nonumber \\
  & \qquad \qquad \quad =\exp\left\{\sum_{k=0}^{i-1}\frac{1}{\alpha_{i}+2}\sum_{j=0}^{k}\frac{1}{\gamma^{j}}\log\left(c\tx{A}_{\alpha_{i-k}}^{2}
  \left(\frac{2^{i-k}}{\tau_{1}-\tau_{2}}\right)^{\frac{\gamma+1}{\gamma}}\right)\right\}\nonumber \\
&\qquad \qquad \quad \le \exp\left\{\frac{4(\gamma+1)}{(1-\gamma)^{2}(2+\alpha_{0})}\left(\log\left(\frac{c}{\tau_{1}-\tau_{2}}\right)+\log\left(\frac{2+\alpha_{0}}{2+\alpha_{0}-\gamma}\right)\right)\right\}\nonumber \\
& \qquad\qquad \quad \quad \cdot\exp\left\{\frac{(\gamma+1)\log(2)}{\gamma(1-\gamma)(2+\alpha_{0})}\sum_{k=0}^{i-1}(i-k)\gamma^{(i-k)}\right\}\le \frac{c}{(\tau_{1}-\tau_{2})^{\tx{a}_{0}}}<\infty,
\end{flalign*}
for $c,\tx{a}_{0}\equiv c,\tx{a}_{0}(n,\alpha_{0},\gamma_{0})$. We then send $i\to \infty$ in \eqref{6.7} to conclude with \eqref{6.8} and the proof is complete.
\end{proof}

\subsection{A Caccioppoli type inequality for powers}\label{s62}
For $z\in \mathbb{R}^{n}$ and $\alpha\ge -1$, we introduce functions
\eqn{gg.1}
$$
\tx{L}(z):=\ell_{\mu}(z)^{p},\qquad \qquad \quad \tx{l}_{\alpha}(z):=\tx{L}(z)^{\frac{(\alpha+2)}{2}}+1,
$$
and recall that whenever $w$ is a twice differentiable function, the inequalities
\eqn{gg}
$$
\snr{\nabla V_{\mu,p}(\nabla w)}^{2}\approx \ell_{\mu}(\nabla w)^{p-2}\snr{\nabla^{2}w}^{2}\gtrsim \snr{\nabla \tx{L}(\nabla w)^{\frac{1}{2}}}^{2}
$$
holds up to constants depending only on $p$. Next, we look back at the family of approximating integrals defined in Section \ref{as},
and record that by \eqref{cdq}$_{1,2,3}$, and classical scalar regularity results \cite[Chapter 8]{giu} it is $u_{\varepsilon}\in \WW^{1,\infty}_{\loc}(B)\cap \WW^{2,2}_{\loc}(B)$.
Then we scale $u_{\varepsilon}$ in such a way that function $v_{\varepsilon}(x):=(u_{\varepsilon}(x_{0}+\rrr x)-(u_{\varepsilon})_{B})/\rrr$ minimizes $\F$ on $B_{1}(0)$,
name $v(x):=(u(x_{0}+\rrr x)-(u)_{B})/\rrr$, and observe that by construction it is 
\eqn{apri}
$$
v_{\varepsilon}\in \WW^{1,\infty}_{\loc}(B_{1}(0))\cap \WW^{2,2}_{\loc}(B_{1}(0)).
$$ 
Now we are ready to prove a homogenized Caccioppoli type inequality involving arbitrary powers of $\tx{L}(\nabla v_{\varepsilon})$.
\begin{lemma}\label{l62}
For all nonnegative $\eta\in \CC^{1}_{c}(B_{1/4}(0))$, $\alpha\ge -1$ and all constants $\tx{M}_{\varepsilon}\ge 1$ such that
\eqn{mm}
$$
\tx{M}_{\varepsilon}\ge \max\left\{\nr{F_{\varepsilon}'(\nabla v_{\varepsilon})}_{\LL^{\infty}(\supp(\eta))}^{\frac{q-p}{q-1}},1\right\}
$$
the Caccioppoli type inequality for powers
\eqn{cacc}
$$
\nr{\eta \nabla \tx{l}_{\alpha}(\nabla v_{\varepsilon})}_{\LL^{2}(B_{1})}\le c\tx{A}_{\alpha}\tx{M}_{\varepsilon}^{\frac{1}{2}}\nr{\tx{l}_{\alpha}(\nabla v_{\varepsilon})\nabla \eta}_{\LL^{2}(B_{1})},
$$
holds with $c\equiv c(n,N,L,p,q)$, and $\tx{A}_{\alpha}$ as in \eqref{aa}.
\end{lemma}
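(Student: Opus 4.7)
The plan is a scalar Moser-type test on the differentiated Euler--Lagrange system. Thanks to \eqref{apri}, the rescaled minimizer enjoys $v_\varepsilon\in\WW^{1,\infty}_{\loc}(B_1(0))\cap\WW^{2,2}_{\loc}(B_1(0))$, which legitimizes differentiating the Euler--Lagrange identity of $\F_\varepsilon$ in direction $s\in\{1,\dots,n\}$:
\begin{equation*}
\int_{B_1}\langle F''_\varepsilon(\nabla v_\varepsilon)\partial_s\nabla v_\varepsilon,\nabla w\rangle\,dx=0
\end{equation*}
for every scalar $w\in \WW^{1,2}$ with $\supp w\Subset B_{1/4}$. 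I would then insert the test $w_s:=\eta^2 L(\nabla v_\varepsilon)^{\alpha+1}\partial_s v_\varepsilon$, which is admissible thanks to the above a priori bounds, and sum over $s$. Expanding $\nabla w_s$ splits the resulting identity as $0=I_1+I_2+I_3$, where $I_3$ collects the Hessian of $v_\varepsilon$, $I_2$ is the cross term containing $\nabla L$ with prefactor $\alpha+1$, and $I_1$ gathers the $\nabla\eta$-pieces. Both $I_2$ and $I_3$ are nonnegative, so $|I_1|=I_2+I_3$.

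The key algebraic input, only available in the scalar setting, is the pointwise identity
\begin{equation*}
\sum_{s=1}^{n}(\partial_s v_\varepsilon)\partial_s\nabla v_\varepsilon=\tfrac{1}{p\ell_\mu^{p-2}}\nabla L(\nabla v_\varepsilon),
\end{equation*}
which transforms $I_2$ and $I_1$ into integrals of $\langle F''_\varepsilon\nabla L,\nabla L\rangle/\ell_\mu^{p-2}$ and $\langle F''_\varepsilon\nabla L,\nabla\eta\rangle/\ell_\mu^{p-2}$, respectively. The lower bound in $\eqref{assf}_3$ together with \eqref{gg} then yields
\begin{equation*}
I_3\gtrsim\frac{1}{(\alpha+2)^2}\int\eta^2|\nabla L^{(\alpha+2)/2}|^2\,dx,\qquad I_2\gtrsim\frac{\alpha+1}{(\alpha+2)^2}\int\eta^2|\nabla L^{(\alpha+2)/2}|^2\,dx.
\end{equation*}

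To bound $I_1$ from above, I would apply Cauchy--Schwarz to the PSD bilinear form $\langle F''_\varepsilon\cdot,\cdot\rangle$, split $L^{\alpha+1}=L^{\alpha/2}\cdot L^{(\alpha+2)/2}$, and use Young's inequality with weight $t=2/(\alpha+1)$. On the $\nabla\eta$-side the quantified Legendre bound $|F''_\varepsilon|/\ell_\mu^{p-2}\leq cM_\varepsilon$, which is \eqref{assf.2} combined with the definition \eqref{mm} of $M_\varepsilon$, produces
\begin{equation*}
|I_1|\leq\tfrac{1}{2}I_2+\frac{cM_\varepsilon}{\alpha+1}\int L^{\alpha+2}|\nabla\eta|^2\,dx.
\end{equation*}
Substituting $|I_1|=I_2+I_3$, absorbing $I_2/2$ into the left and invoking the lower bound on $\tfrac{1}{2}I_2+I_3$ delivers \eqref{cacc} with $\tx{A}_\alpha\lesssim(\alpha+2)/(\alpha+1)$. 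The borderline case $\alpha=-1$ is simpler: $I_2\equiv 0$, no Young step is needed, and $\tx{A}_{-1}=1$ follows for free.

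The main technical obstacle is the precise bookkeeping of the $(\alpha+1)$-dependence in the Cauchy--Schwarz/Young step: this factor is exactly what keeps $\tx{A}_\alpha$ bounded as $\alpha\to\infty$ and renders the estimate fit for the homogenised Moser iteration of Lemma \ref{am}.
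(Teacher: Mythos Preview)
Your approach mirrors the paper's exactly: same test function $w_s=\eta^2 L^{\alpha+1}\partial_s v_\varepsilon$, same scalar identity \eqref{6.0.3}, same three-term split, and the same Cauchy--Schwarz/Young absorption of the $\nabla\eta$-term into the $\nabla L$-term (paper's (II)).

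There is, however, one genuine imprecision. You invoke ``$|F''_\varepsilon|/\ell_\mu^{p-2}\le cM_\varepsilon$ via \eqref{assf.2}''. Two problems: first, \eqref{assf.2} is stated for $F$, not for the perturbed $F_\varepsilon$, and carries the $\mu$-dependent constant $L_\mu$, whereas the lemma asks for $c\equiv c(n,N,L,p,q)$. Second, and more importantly, the pointwise ellipticity-ratio bound you claim is \emph{false} in the degenerate case $\mu=0$, $p>2$: the additional term $\gamma_\varepsilon\ell_1(z)^{q-2}/\ell_\mu(z)^{p-2}$ in $|F''_\varepsilon|/\ell_\mu^{p-2}$ blows up as $z\to 0$. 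What \emph{is} true is the integrated bound
\[
\int \tx{L}^{\alpha+2}\,\frac{|F''_\varepsilon(\nabla v_\varepsilon)|}{\ell_\mu(\nabla v_\varepsilon)^{p-2}}\,|\nabla\eta|^2\,dx
\;\le\; c\,\tx{M}_\varepsilon\int \tx{l}_\alpha(\nabla v_\varepsilon)^2\,|\nabla\eta|^2\,dx,
\]
where the $+1$ in $\tx{l}_\alpha$ absorbs the small-gradient region. The paper obtains this by splitting $\{|\nabla v_\varepsilon|>1\}$ and $\{|\nabla v_\varepsilon|\le 1\}$: on the former it uses $\eqref{cdq}_4$ (the Legendre bound for $F_\varepsilon$, with $\varepsilon$-independent constant $\Lambda$) together with \eqref{f'} to convert the exponent $(q-2)/(q-1)$ into $(q-p)/(q-1)$; on the latter it uses $\eqref{cdq}_3$ and the fact that everything is bounded. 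This is precisely why your displayed bound on $|I_1|$ should have $\tx{l}_\alpha^2$ rather than $L^{\alpha+2}$ under the integral. With this correction, your argument is complete and coincides with the paper's.
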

\begin{proof}
With $\eta\in \CC^{1}_{c}(B_{1/4})$ being a nonnegative cut-off function, $\alpha > -1$, and $s\in \{1,\cdots,n\}$,
we test \eqref{elsd} against $w_{s,\alpha}:=\eta^{2}\tx{L}(\nabla v_{\varepsilon})^{\alpha+1}\partial_{s}v_{\varepsilon}$, admissible by \eqref{apri}, to get
\begin{eqnarray}\label{6.0.1}
0&=&\sum_{s=1}^{n}\int_{B_{1}}\eta^{2}\tx{L}(\nabla v_{\varepsilon})^{\alpha+1}\langle F_{\varepsilon}''(\nabla v_{\varepsilon})\partial_{s}\nabla v_{\varepsilon},\partial_{s}\nabla v_{\varepsilon}\rangle\dx\nonumber \\
&&+(\alpha+1)\sum_{s=1}^{n}\int_{B_{1}}\eta^{2}\tx{L}(\nabla v_{\varepsilon})^{\alpha}\langle F_{\varepsilon}''
(\nabla v_{\varepsilon})\partial_{s}v_{\varepsilon}\partial_{s}\nabla v_{\varepsilon},\nabla \tx{L}(\nabla v_{\varepsilon})\rangle\dx\nonumber \\
&&+2\sum_{s=1}^{n}\int_{B_{1}}\eta\tx{L}(\nabla v_{\varepsilon})^{\alpha+1}\langle F_{\varepsilon}''
(\nabla v_{\varepsilon})\partial_{s}\nabla v_{\varepsilon}\partial_{s}v_{\varepsilon},\nabla \eta \rangle\dx=:\mbox{(I)}+\mbox{(II)}+\mbox{(III)}.
\end{eqnarray}
We immediately bound below
\begin{eqnarray}\label{6.0.2}
\mbox{(I)}&\stackrel{\eqref{cdq}_{2}}{\ge}&\frac{1}{\Lambda}\int_{B_{1}}\eta^{2}\tx{L}(\nabla v_{\varepsilon})^{\alpha+1}\ell_{\mu}(\nabla v_{\varepsilon})^{p-2}\snr{\nabla^{2}v_{\varepsilon}}^{2}\dx\nonumber \\
&\stackrel{\eqref{gg}}{\ge}&c\int_{B_{1}}\eta^{2}\tx{L}(\nabla v_{\varepsilon})^{\alpha+1}\snr{\nabla \tx{l}_{-1}(\nabla v_{\varepsilon})}^{2}\dx,
\end{eqnarray}
for $c\equiv c(n,L,p,q)$. Now notice that
\eqn{6.0.3}
$$
\nabla \tx{L}(\nabla v_{\varepsilon})=p\ell_{\mu}(\nabla v_{\varepsilon})^{p-2}\sum_{s=1}^{n}\partial_{s}v_{\varepsilon}\partial_{s}\nabla v_{\varepsilon},
$$
so we control
\begin{eqnarray*}
  \mbox{(II)}&\stackrel{\eqref{6.0.3}}{=}&\frac{(\alpha+1)}{p}\int_{B_{1}}\eta^{2}\tx{L}(\nabla v_{\varepsilon})^{\alpha}\left\langle
  \frac{F_{\varepsilon}''(\nabla v_{\varepsilon})}{\ell_{\mu}(\nabla v_{\varepsilon})^{p-2}}\nabla \tx{L}(\nabla v_{\varepsilon}),\nabla \tx{L}(\nabla v_{\varepsilon})\right\rangle\dx\nonumber \\
&\stackrel{\eqref{cdq}_{2}}{\ge}&\frac{\alpha+1}{p\Lambda}\int_{B_{1}}\eta^{2}\tx{L}(\nabla v_{\varepsilon})^{\alpha}\snr{\nabla \tx{L}(\nabla v_{\varepsilon})}^{2}\dx\nonumber \\
  &=&\frac{4(\alpha+1)}{p\Lambda(\alpha+2)^{2}}\int_{B_{1}}\eta^{2}\snr{\nabla \tx{L}(\nabla v_{\varepsilon})^{\frac{\alpha+2}{2}}}^{2}\dx=\frac{4(\alpha+1)}{p\Lambda(\alpha+2)^{2}}
  \int_{B_{1}}\eta^{2}\snr{\nabla \tx{l}_{\alpha}(\nabla v_{\varepsilon})}^{2}\dx.
\end{eqnarray*}
Finally, via Cauchy-Schwarz and Young inequalities we obtain
\begin{eqnarray*}
  \snr{\mbox{(III)}}&\stackrel{\eqref{6.0.3}}{=}&\frac{2}{p}\left|\int_{B_{1}}\eta\tx{L}(\nabla v_{\varepsilon})^{\alpha+1}
  \left\langle \frac{F_{\varepsilon}''(\nabla v_{\varepsilon})}{\ell_{\mu}(\nabla v_{\varepsilon})^{p-2}}\nabla \tx{L}(\nabla v_{\varepsilon}),\nabla \eta\right\rangle\dx\right|\nonumber \\
  &\le&\frac{\mbox{(II)}}{2}+\frac{8}{p(\alpha+1)}\int_{B_{1}\cap\{\snr{\nabla v_{\varepsilon}}>1\}}
  \snr{\nabla \eta}^{2}\tx{L}(\nabla v_{\varepsilon})^{\alpha+2}\frac{\snr{F_{\varepsilon}''(\nabla v_{\varepsilon})}}{\ell_{\mu}(\nabla v_{\varepsilon})^{p-2}}\dx\nonumber \\
&&+\frac{8}{p(\alpha+1)}\int_{B_{1}\cap \{\snr{\nabla v_{\varepsilon}}\le 1\}}\snr{\nabla \eta}^{2}\tx{L}(\nabla v_{\varepsilon})^{\alpha+1+\frac{2}{p}}\snr{F_{\varepsilon}''(\nabla v_{\varepsilon})}\dx\nonumber \\
  &\stackrel{\eqref{cdq}_{4}}{\le}&\frac{\mbox{(II)}}{2}+\frac{8\Lambda}{p(\alpha+1)}
  \int_{B_{1}\cap \{\snr{\nabla v_{\varepsilon}}>1\}}\snr{\nabla \eta}^{2}\tx{L}(\nabla v_{\varepsilon})^{\alpha+2}
  \left(1+\snr{F_{\varepsilon}'(\nabla v_{\varepsilon})}^{\frac{q-2}{q-1}}\right)\ell_{\mu}(\nabla v_{\varepsilon})^{2-p}\dx\nonumber \\
&&+\frac{c}{\alpha+1}\int_{B_{1}\cap \{\snr{\nabla v_{\varepsilon}}\le 1\}}\snr{\nabla \eta}^{2}\tx{L}(\nabla v_{\varepsilon})^{\alpha+1+\frac{2}{p}}\dx\nonumber \\
  &\stackrel{\eqref{f'}}{\le}&\frac{\mbox{(II)}}{2}+\frac{c}{\alpha+1}
  \int_{B_{1}\cap \{\snr{\nabla v_{\varepsilon}}>1\}}\snr{\nabla \eta}^{2}\tx{L}(\nabla v_{\varepsilon})^{\alpha+2}\left(1+\snr{F_{\varepsilon}'(\nabla v_{\varepsilon})}^{\frac{q-p}{q-1}}\right)\dx\nonumber \\
&&+\frac{c}{\alpha+1}\int_{B_{1}\cap \{\snr{\nabla v_{\varepsilon}}\le 1\}}\snr{\nabla \eta}^{2}\tx{L}(\nabla v_{\varepsilon})^{\alpha+1+\frac{2}{p}}\dx\nonumber \\
  &\le&\frac{\mbox{(II)}}{2}+\frac{c}{\alpha+1}
  \int_{B_{1}\cap \{\snr{\nabla v_{\varepsilon}}>1\}}\snr{\nabla \eta}^{2}\tx{L}(\nabla v_{\varepsilon})^{\alpha+2}\left(1+\snr{F_{\varepsilon}'(\nabla v_{\varepsilon})}^{\frac{q-p}{q-1}}\right)\dx\nonumber \\
&&+\frac{c}{\alpha+1}\int_{B_{1}\cap \{\snr{\nabla v_{\varepsilon}}\le 1\}}\snr{\nabla \eta}^{2}\tx{l}_{\alpha}(\nabla v_{\varepsilon})^{2}\dx\nonumber \\
  &\le&\frac{\mbox{(II)}}{2}+\frac{c}{\alpha+1}
  \int_{B_{1}}\snr{\nabla \eta}^{2}\tx{l}_{\alpha}(\nabla v_{\varepsilon})^{2}\left(1+\snr{F_{\varepsilon}'(\nabla v_{\varepsilon})}^{\frac{q-p}{q-1}}\right)\dx,
\end{eqnarray*}
with $c\equiv c(n,L,p,q)$. Merging the content of the two previous displays, we derive
\begin{flalign}\label{6.0}
  \int_{B_{1}}\eta^{2}\snr{\nabla \tx{l}_{\alpha}(\nabla v_{\varepsilon})}^{2}\dx\le c\left(\frac{\alpha+2}{\alpha+1}\right)^{2}
  \int_{B_{1}}\snr{\nabla \eta}^{2}\tx{l}_{\alpha}(\nabla v_{\varepsilon})^{2}\left(1+\snr{F_{\varepsilon}'(\nabla v_{\varepsilon})}^{\frac{q-p}{q-1}}\right)\dx,
\end{flalign}
with $c\equiv c(n,L,p,q)$. Let us briefly discuss the case $\alpha=-1$: in \eqref{6.0.1} term $\mbox{(II)}$ vanishes, so we only need to estimate
term $\mbox{(III)}$ in a slightly different way than what we did before. Via Cauchy Schwarz and Young inequalities we get
\begin{eqnarray*}
\snr{\mbox{(III)}}&\le&\frac{\mbox{(I)}}{2}+8\int_{B_{1}}\snr{F_{\varepsilon}''(\nabla v_{\varepsilon})}\snr{\nabla v_{\varepsilon}}^{2}\snr{\nabla \eta}^{2}\dx\nonumber \\
&\stackrel{\eqref{cdq}_{3,4}}{\le}&\frac{\mbox{(I)}}{2}+
8\Lambda\int_{B_{1}\cap\{\snr{\nabla v_{\varepsilon}}>1\}}\frac{\left(1+\snr{F_{\varepsilon}'(\nabla v_{\varepsilon})}^{\frac{q-2}{q-1}}\right)}{\ell_{\mu}(\nabla v_{\varepsilon})^{p-2}}
\ell_{\mu}(\nabla v_{\varepsilon})^{p}\snr{\nabla \eta}^{2}\dx\nonumber \\
&&+c\int_{B_{1}\cap \{\snr{\nabla v_{\varepsilon}}\le 1\}}\ell_{\mu}(\nabla v_{\varepsilon})^{2}\snr{\nabla \eta}^{2}\dx\nonumber \\
&\stackrel{\eqref{f'}}{\le}&\frac{\mbox{(I)}}{2}+
c\int_{B_{1}\cap\{\snr{\nabla v_{\varepsilon}}> 1\}}\left(1+\snr{F_{\varepsilon}'(\nabla v_{\varepsilon})}^{\frac{q-p}{q-1}}\right)\tx{L}(\nabla v_{\varepsilon})\snr{\nabla \eta}^{2}\dx\nonumber \\
&&+c\int_{B_{1}\cap\{\snr{\nabla v_{\varepsilon}}\le 1\}}\left(1+\tx{L}(\nabla v_{\varepsilon})^{\frac{1}{2}}\right)^{2}\snr{\nabla \eta}^{2}\dx\nonumber \\
&\le&\frac{\mbox{(I)}}{2}+c\int_{B_{1}}\left(1+\snr{F_{\varepsilon}'(\nabla v_{\varepsilon})}^{\frac{q-p}{q-1}}\right)\tx{l}_{-1}(\nabla v_{\varepsilon})^{2}\snr{\nabla \eta}^{2}\dx,
\end{eqnarray*}
for $c\equiv c(n,L,p,q)$, thus
\begin{flalign}\label{6.0..}
  \int_{B_{1}}\eta^{2}\snr{\nabla \tx{l}_{-1}(\nabla v_{\varepsilon})}^{2}\le
  c \int_{B_{1}}\snr{\nabla \eta}^{2}\tx{l}_{-1}(\nabla v_{\varepsilon})^{2}\left(1+\snr{F_{\varepsilon}'(\nabla v_{\varepsilon})}^{\frac{q-p}{q-1}}\right)\dx.
\end{flalign}
Finally, with \eqref{6.0}-\eqref{6.0..} at hand and keeping in mind the validity of \eqref{apri}, we can pick any constant $\tx{M}_{\varepsilon}\ge 1$
satisfying \eqref{mm} and turn \eqref{6.0}-\eqref{6.0..} into \eqref{cacc}. The proof is complete.
\end{proof}

\subsection{Proof of Theorem \ref{scat}}
We start by observing that there is no loss of generality in assuming 
\eqn{sc.0}
$$
\nr{F'_{\varepsilon}(\nabla v_{\varepsilon})}_{\LL^{\infty}(B_{1/8})}\ge 1,
$$
otherwise the proof would be already finished. We then fix parameters $1/8\le \tau_{2}<\tau_{1}\le 1/4$, corresponding to concentric balls
$B_{1/8}\subseteq B_{\tau_{2}}\subset B_{\tau_{1}}\subseteq B_{1/4}$, set $\tx{M}_{\varepsilon}:=\nr{F_{\varepsilon}^{\prime}(\nabla v_{\varepsilon})}_{\LL^{\infty}(B_{\tau_{1}})}^{\frac{q-p}{q-1}}$
and notice that \eqref{cacc} holds in particular for all nonnegative $\eta\in \CC^{1}_{c}(B_{\tau_{1}})$ and our choice of $\tx{M}_{\varepsilon}$ matches \eqref{mm}
(keep in mind \eqref{sc.0}). Moreover, by \eqref{apri} we see that $\tx{L}(\nabla v_{\varepsilon})\in \LL^{\infty}_{\loc}(B_{1})$, and
$\tx{l}_{-1}(\nabla v_{\varepsilon})\in \WW^{1,2}_{\loc}(B_{1})$. This, together with \eqref{cacc} allows applying Lemma \ref{am} to
$\tx{L}(\nabla v_{\varepsilon})$, with $\sigma=\tau_{1}$, $\alpha_{0}=-1$, $\gamma_{0}$ being any positive number less than $p/q$ (say $\gamma_{0}=p/(2q)<1$),
$\tx{A}_{\alpha}$ as in \eqref{aa}, and $\tx{M}=\tx{M}_{\varepsilon}$ to get
\begin{eqnarray}\label{sc.3}
\nr{F'_{\varepsilon}(\nabla v_{\varepsilon})}_{\LL^{\infty}(B_{\tau_{2}})}^{\frac{p}{q-1}}&\stackrel{\eqref{f'}}{\le}&c\nr{\tx{L}(\nabla v_{\varepsilon})}_{\LL^{\infty}(B_{\tau_{2}})}+c\nonumber \\
&\stackrel{\eqref{6.8}}{\le}& \frac{c}{(\tau_{1}-\tau_{2})^{\tx{a}_{0}}}
\nr{F_{\varepsilon}'(\nabla v_{\varepsilon})}_{\LL^{\infty}(B_{\tau_{1}})}^{\frac{\gamma(q-p)}{(q-1)(1-\gamma)}}\nra{\tx{l}_{-1}(\nabla v_{\varepsilon})}_{\WW^{1,2}(B_{\tau_{1}})}^{2}+c,
\end{eqnarray}
for $c\equiv c(n,L,p,q)$, $\tx{a}_{0}\equiv \tx{a}_{0}(n,p,q)$. Now, from \eqref{pq} and our choice of $\gamma_{0}$ we deduce that
$$
\frac{\gamma(q-p)}{(q-1)(1-\gamma)}<\frac{p}{q-1},
$$
so in \eqref{sc.3} we can apply Young inequality with conjugate exponents $\left(\frac{p(1-\gamma)}{\gamma(q-p)},\frac{p(1-\gamma)}{p-\gamma q}\right)$ to have
\begin{flalign*}
  \nr{F_{\varepsilon}(\nabla u_{\varepsilon})}_{\LL^{\infty}(B_{\tau_{2}})}^{\frac{p}{q-1}}\le
  \frac{1}{4}\nr{F_{\varepsilon}(\nabla u_{\varepsilon})}_{\LL^{\infty}(B_{\tau_{1}})}^{\frac{p}{q-1}}+
  \frac{c\nra{\tx{l}_{-1}(\nabla v_{\varepsilon})}_{\WW^{1,2}(B_{\tau_{1}})}^{\frac{2p(1-\gamma)}{p-\gamma q}}}{(\tau_{1}-\tau_{2})^{\frac{p\tx{a}_{0}(1-\gamma)}{p-\gamma q}}}+c.
\end{flalign*}
Lemma \ref{l5} then yields
\begin{flalign*}
\nr{F'_{\varepsilon}(\nabla v_{\varepsilon})}_{\LL^{\infty}(B_{1/8})}^{\frac{p}{q-1}}\le c\nra{\tx{l}_{-1}(\nabla v_{\varepsilon})}_{\WW^{1,2}(B_{1/4})}^{\frac{2p(1-\gamma)}{p-\gamma q}}+c,
\end{flalign*}
thus via \eqref{keymono}$_{1}$,
\begin{flalign*}
\nra{\nabla v_{\varepsilon}}_{\LL^{\infty}(B_{1/8})}\le c\nra{\tx{l}_{-1}(\nabla v_{\varepsilon})}_{\WW^{1,2}(B_{1/4})}^{\frac{2(1-\gamma)(q-1)}{(p-1)(p-\gamma q)}}+c
\end{flalign*}
with $c\equiv c(n,L,p,q)$. We then scale back on $B$ to deduce via \eqref{gg} and \eqref{hdes.1},
\begin{flalign*}
\nra{\nabla u_{\varepsilon}}_{\LL^{\infty}(B/8)}\le c\nra{V_{\mu,p}(\nabla u_{\varepsilon})}_{\WW^{1,2}(B/4)}^{\frac{2(1-\gamma)(q-1)}{(p-1)(p-\gamma q)}}+c\le c\nra{(F(\nabla u)+1)}_{\LL^{1}(B)}^{\tx{b}},
\end{flalign*}
for $c\equiv c(n,L,L_{\mu},p,q)$ and $\tx{b}\equiv \tx{b}(n,p,q)$ (possibly different from the one appearing in \eqref{hdes.1}).
Finally, we send $\varepsilon\to 0$ in the above display and use $\eqref{approx}_{3}$ to conclude with \eqref{scai}. A standard covering argument completes the proof.

\subsection*{Data availability statement} There are no data attached to this paper.


\begin{thebibliography}{}
\bibitem{Cianchi2019} C. A. Antonini, A. Cianchi, G. Ciraolo, A. Farina, V. G. Maz'ya, Global second-order estimates in anisotropic elliptic problems.
  \emph{Preprint} (2023). \href{https://arxiv.org/abs/2307.03052}{arXiv:2307.03052}
  \bibitem{ball} J. M. Ball, Discontinuous equilibrium solutions and cavitation in nonlinear elasticity. \emph{Phil. Trans. R. Soc. Lond.} 306, 557-611, (1982).
\bibitem {ba2} P. Baroni, Riesz potential estimates for a general class of quasilinear equations. \emph{Calc. Var. \& PDE} 53, 803-846, (2015).
\bibitem{bdp} P. Baroni, A. Di Castro, G. Palatucci, Intrinsic geometry and de Giorgi classes for certain anisotropic problems. \emph{Disc. Cont. Dyn. Systems S} 10(4), 647-659, (2017).
\bibitem{bm} L. Beck, G. Mingione, Lipschitz bounds and nonuniform ellipticity. \emph{Comm. Pure Appl. Math.} 73, 944-1034, (2020).
\bibitem{bs} P. Bella, M. Sch\"affner, On the regularity of minimizers for scalar integral functionals with $(p,q)$-growth. \emph{Anal. PDE} 13(7), 2241-2257 (2020).
\bibitem{bs2} P. Bella, M. Sch\"affner, Local boundedness and Harnack inequality for solutions of linear nonuniformly elliptic equations. \emph{Comm. Pure Appl. Math.} LXXIV, 0453-0477, (2021).
\bibitem{BS1} P. Bella, M. Sch\"affner, Lipschitz bounds for integral functionals with $(p,q)$-growth conditions. \emph{Adv. Calc. Var.} 17(2): 373-390, (2024).
\bibitem{bfbf} M. Bildhauer, M. Fuchs, Twodimensional anisotropic variational problems. \emph{Calc. Var. \& PDE} 16, 177-186, (2003).
\bibitem{Bonfanti2012} G. Bonfanti, A. Cellina, M. Mazolla, The higher integrability and the validity of the Euler-Lagrange equation for solutions to variational problems.
  \emph{SIAM J. Control Optim.} 50, 888-899, (2012).
\bibitem{bbbb1} P. Bousquet, L. Brasco, $\CC^{1}$ regularity of orthotropic $p$-harmonic functions in the plane. \emph{Anal. \& PDE} 11, nr. 4, 813-854, (2018).
\bibitem{bb} P. Bousquet, L. Brasco, Lipschitz regularity for orthotropic functionals with nonstandard growth conditions. \emph{Rev. Math. Iberoam.} 36, 7, 1989-2032, (2020). 
\bibitem{bdms} V. B\"ogelein, F. Duzaar, P. Marcellini, C. Scheven, Boundary regularity for elliptic systems with $p,q$-growth \emph{J. Math. Pures Appl.} 159, 250-293, (2022).
\bibitem{c2} S. Campanato, H\"older continuity of the solutions of some non-linear elliptic systems. \emph{Adv. in Math.} 48, 16-43, (1983).
\bibitem{ckp1} M. Carozza, J. Kristensen, A. Passarelli di Napoli, Higher differentiability of minimizers of convex variational integrals. \emph{Ann. I. H. Poincar\'e - AN} 28, 395-411, (2011).
\bibitem{ckp2} M. Carozza, J. Kristensen, A. Passarelli di Napoli, On the validity of the Euler-Lagrange system. \emph{Comm. Pure Appl. Anal.} 14, 1, (2015).
\bibitem{ckp} M. Carozza, J. Kristensen, A. Passarelli di Napoli, Regularity of minimizers of autonomous convex variational integrals.
  \emph{Ann. Sc. Norm. Sup. Pisa Cl. Sci.} (5), vol. XIII, 1065-1089, (2014).
\bibitem{c} A. Cianchi, Sharp Morrey-Sobolev inequalities and the distance from the extremals. \emph{Trans. American Math. Soc.} 360, nr. 8, 4335-4347, (2008).
\bibitem{cm1} A. Cianchi, V. G. Maz'ya, Global boundedness of the gradient for a class of nonlinear elliptic systems. \emph{Arch. Ration. Mech. Anal.} 212, 1, 129-177, (2014).
\bibitem{cisc} A. Cianchi, M. Sch\"affner, Local boundedness of minimizers under unbalanced Orlicz growth conditions. \emph{J. Diff. Equ.} 401, 58-92, (2024).
\bibitem{cg} N. Clozeau, A. Gloria, Quantitative nonlinear homogenization: control of oscillations. \emph{Arch. Ration. Mech. Anal.} 247, 67, (2023).
\bibitem{deqc} C. De Filippis, Quasiconvexity and partial regularity via nonlinear potentials. \emph{J. Math. Pures Appl.} 163, 11-82, (2022).
\bibitem{demi3} C. De Filippis, G. Mingione, On the regularity of minima of non-autonomous functionals. \emph{J. Geom. Anal.} 30:1584-1626, (2020).
\bibitem{demi1} C. De Filippis, G. Mingione, Lipschitz bounds and nonautonomous integrals. \emph{Arch. Ration. Mech. Anal.} 242, 973-1057, (2021).
\bibitem{dms} C. De Filippis, G. Mingione, Nonuniformly elliptic Schauder theory. \emph{Invent. math.} 234, 1109-1196, (2023).
\bibitem{dp} C. De Filippis, M. Piccinini, Borderline global regularity for nonuniformly elliptic systems. \emph{Int. Math. Res. Notices} 20, vol. 2023, 17324-17376, (2023).
\bibitem{ds} C. De Filippis, B. Stroffolini, Singular multiple integrals and nonlinear potentials. \emph{J. Funct. Anal.} 285(2), 109952, (2023).
\bibitem{dn} A. Douglis, L. Nirenberg, Interior estimates for elliptic systems of partial differential equations. \emph{Comm. Pure Appl. Math.} 8(4), pp. 503-538, (1955).
\bibitem{EkeTem} I. Ekeland, R. T\'{e}mam, Convex analysis and variational problems.
  \emph{Classics in Applied Mathematics, Society for Industrial and Applied Mathematics (SIAM), Philadelphia, PA} 28, (1999).
\bibitem{elm2} L. Esposito, F. Leonetti, G. Mingione, Higher integrability for minimizers of integral functionals with $(p,q)$ growth. \emph{J. Diff. Equ.} 157, 414-438, (1999).  
\bibitem{elm1} L. Esposito, F. Leonetti, G. Mingione, Regularity results for minimizers of irregular integrals with $(p,q)$ growth. \emph{Forum Math.} 14, 245-272, (2002).
\bibitem{ELM} L. Esposito, F. Leonetti, G. Mingione, Sharp regularity for functionals with $(p,q)$ growth. \emph{J. Diff. Equ.} 204, 1, 5-55, (2004).
\bibitem{eva03} L. C. Evans, Some new PDE methods for weak KAM theory. \emph{Calc. Var. \& PDE} 17, 159-177, (2003).
\bibitem{fra} F. Franceschini, Partial regularity for $BV^{\mathcal{B}}$ minimizers. \emph{Preprint} (2023). \href{https://arxiv.org/abs/2310.20002}{arXiv:2310.20002}
\bibitem{fr1} J. Frehse, On Signorini's problem and variational problems with thin obstacles. \emph{Ann. Sc. Norm. Sup. Pisa Cl. Sci.} (4), nr. 2, 343-362, (1977). 
\bibitem{fr} J. Frehse, Two dimensional variational problems with thin obstacle. \emph{Math. Z.} 143, 279-288, (1975).
\bibitem{fwg} F. W. Gehring, The $\LL^{p}$-integrability of the partial derivatives of a quasi conformal mapping. \emph{Acta Math.} 130, 265-277, (1973).
\bibitem{g} M. Giaquinta, Growth conditions and regularity, a counterexample. \emph{manuscripta math.} 59, 245-248, (1987).
\bibitem{gm1} M. Giaquinta, G. Modica, Regularity results for some classes of higher order nonlinear elliptic systems. \emph{J. Reine Ang. Math.} 311/312, 145-168, (1979).
\bibitem{gm} M. Giaquinta, G. Modica, Remarks on the regularity of the minimizers of certain degenerate functionals. \emph{manuscripta math.} 57, 55-99, (1986).
\bibitem{giu} E. Giusti, Direct Methods in the calculus of variations. \emph{World Scientific Publishing Co., Inc., River Edge.} (2003).
\bibitem{gme} F. Gmeineder, Partial regularity for symmetric quasiconvex functionals on $BD$. \emph{J. Math. Pures Appl.} 145, 83-129, (2021).
\bibitem{gk} F. Gmeineder, J. Kristensen, Partial regularity for BV minimizers. \emph{Arch. Ration. Mech. Anal.} 232, 1429-1473, (2019). 
\bibitem{gkpq} F. Gmeineder, J. Kristensen, Quasiconvex functionals of $(p,q)$-growth and the partial regularity of relaxed minimizers. \emph{Preprint} (2022). \href{https://arxiv.org/abs/2209.01613}{arXiv:2209.01613}
\bibitem{ha} C. Hamburger, Regularity of differential forms minimizing degenerate elliptic functionals. \emph{J. reine angew. Math.} 431, 7-64, (1992).
\bibitem{Hirsch} J. Hirsch, M. Sch\"affner, Growth conditions and regularity, an optimal local boundedness result. \emph{Commun. Contemp. Math.} 23(3), 2050029 (2021).
\bibitem{hl} J.-B. Hiriart-Urruty, C. Lemar\'echal, Convex Analysis and Minimization Algorithms I - Fundamentals. \emph{Springer-Verlag Berlin Heidelberg} (1996).
\bibitem{HUL} J.-B. Hiriart-Urruty, C. Lemar\'{e}chal, Convex Analysis and Minimization Algorithms II. \emph{Springer Berlin, Heidelberg} (2010).
\bibitem{ik} C. Irving, L. Koch, Boundary regularity results for minimisers of convex functionals with $(p,q)$-growth. \emph{Adv. Nonlinear Anal.} 12: 20230110, (2023).
\bibitem{JKO} V. V. Jikov, S. M. Kozlov, O. A. Oleinik, Homogenization of Differential Operators and Integral Functionals. \emph{Springer-Verlag, Berlin} xii+570 pp. (1994). 
\bibitem{ko} L. Koch, Global higher integrability for minimisers of convex functionals with $(p,q)$-growth. \emph{Calc. Var. \& PDE} 60(2), 63, (2021).
\bibitem{ko1} L. Koch, Global higher integrability for minimisers of convex obstacle problems
with $(p,q)$-growth. \emph{Calc. Var. \& PDE} 61(3), 88, (2022). 
\bibitem{Koch2022} L. Koch, J. Kristensen, On the validity of the Euler-Lagrange system without growth assumptions. \emph{Preprint} (2022). \href{https://arxiv.org/abs/2203.00333}{arXiv:2203.00333}
\bibitem{krs} L. Koch, M. Ruf, M. Sch\"affner, On the Lavrentiev gap for convex, vectorial integral functionals. \emph{Preprint} (2023). \href{https://arxiv.org/abs/2305.19934}{arXiv:2305.19934}
\bibitem{kqc} J. Kristensen, A necessary and sufficient condition for lower semicontinuity. \emph{Nonlinear Anal.} 120, 43-56, (2015).
\bibitem {LU} O. A. Ladyzhenskaya, N. N. Ural'tseva, Linear and
quasilinear elliptic equations. \emph{Academic Press, New York-London} (1968). 
\bibitem{li} Z. Li, Partial regularity for $BV$ $\omega$-minimizers of quasiconvex functionals. \emph{Calc. Var. \& PDE} 61:178, (2022).
\bibitem{masu2} P. Marcellini, Growth conditions and regularity for weak solutions to nonlinear elliptic PDEs. {\em J. Math. Anal. Appl.} 501, 1, 1, 124408, (2021). 
 \bibitem{M0} P. Marcellini, On the definition and the lower
semicontinuity of certain quasiconvex integrals. \emph{Ann. Inst. H. Poincar\'e Anal. Non Lin\'eaire} 3, 391-409, (1986). 
\bibitem{ma2} P. Marcellini, Un example de solution discontinue d'un probl\'eme variationel dans le cas scalaire. \emph{Preprint, Ist. U. Dini, Firenze} (1987)-(1988).
\bibitem{ma1} P. Marcellini, Regularity of minimizers of integrals of the calculus of variations with non-standard growth conditions. \emph{Arch. Rat. Mech. Anal.} 105, 267-284, (1989).
\bibitem{ma5} P. Marcellini, The stored-energy for some discontinuous deformations in nonlinear elasticity. \emph{Partial Differential Equations and the Calculus of Variations} vol. II, Birkh\"auser Boston Inc., (1989).
\bibitem{Marcellini1991} P. Marcellini, Regularity and existence of solutions of elliptic equations with p,q-growth conditions.
\emph{J. Differ. Equ.} 90, 1-30, (1991).
\bibitem{m93} P. Marcellini, Regularity for elliptic equations with general growth conditions. \emph{J. Diff. Equ.} 105, 296-333, (1993).
\bibitem{mar96} P. Marcellini, Regularity for some scalar variational problems under general growth conditions. \emph{J. Optimization Theory and Applications} 90, 161-181, (1996).
\bibitem{ms16} C. Mooney, O. Savin, Some singular minimizers in low dimensions in the Calculus of Variations. \emph{Arch. Ration. Mech. Anal.} 221, 1-22, (2016).
\bibitem{MorreyB} C. B. Morrey, Jr., Multiple integrals in the calculus of variations. \emph{Classics in Mathematics, Springer-Verlag, Berlin} (2008).
\bibitem{m1} C. B. Morrey, Jr., On the Analyticity of the Solutions of Analytic Non-Linear Elliptic Systems of Partial
Differential Equations: Part I. Analyticity in the Interior. \emph{Amer. J. of Math.} 80, nr. 1, 198-218, (1958).
\bibitem{nec} J. Ne\v{c}as, Sur la r\'egularit\'e des solutions variationelles des \'equations elliptiques non-lin\'eaires d’ordre $2k$ en deux dimensions. \emph{Ann. Scu. Norm. Sup. Pisa} (III), 21, 427-457, (1967).
\bibitem{Rindler} F. Rindler, Calculus of Variations. \emph{Springer Verlag Berlin, Heidelberg} (2018).
\bibitem{Rockafellar} R. T. Rockafellar, Convex Analysis. \emph{Princeton University Press}, (1970).
\bibitem{rs} M. Ruf, M. Sch\"affner, New homogenization results for convex integral functionals and their Euler-Lagrange equations. \emph{Calc. Var. \& PDE} 63:32, (2024).
\bibitem{Schaeffner2020} M. Sch\"affner, Higher integrability for variational integrals with non-standard growth conditions. \emph{Calc. Var. Partial Differential Equations} 60 (2021).
\bibitem{sch24} M. Sch\"affner, Lipschitz bounds for nonuniformly elliptic integral functionals in the plane. \emph{Proc. Amer. Math. Soc.} (2024). \href{https://doi.org/10.1090/proc/16878}{proc/16878}
\bibitem{Schmidt2008} T. Schmidt, Regularity of minimizers of $\WW^{1,p}$-quasiconvex variational integrals with $(p,q)$-growth. \emph{Calc. Var.} 32, 1-24, (2008).
\bibitem{Schmidt2008a} T. Schmidt, Regularity theorems for degenerate quasiconvex energies with (p,q)-growth. \emph{Adv. Calc. Var.} 1(3), 241-270, (2008).
\bibitem{Schmidt2009} T. Schmidt, Regularity of Relaxed Minimizers of Quasiconvex Variational Integrals with $(p,q)$-growth. \emph{Arch. Ration. Mech. Anal.} 193, 311-337, (2009).
\bibitem{Seregin93} G. Seregin, On differentiability properties of the stress tensor in the Coulomb-Mohr theory of plasticity.
\emph{St. Petersburg Math. J.} 4(6), 1257-1272, (1993).
\bibitem{Sverak} V. \v{S}ver\'{a}k, X. Yan, Non-Lipschitz Minimizers of Smooth Uniformly Convex Functionals. \emph{PNAS} 99(24), 15269-15276, (2002).
\bibitem{tr} N. S. Trudinger, On Imbeddings into Orlicz Spaces and Some Applications. \emph{Journal of Mathematics and Mechanics} 17, nr. 5, 473-483, (1967).
\bibitem{Ur} N. N. Ural'tseva, Degenerate quasilinear elliptic systems. {\em Zap.~Na.
Sem.~Leningrad.~Otdel.~Mat.~Inst.~Steklov.~(LOMI)} 7, 184-222, (1968).

\end{thebibliography}
\end{document}